\renewcommand{\email}[2][]{%
  \ifx\emails\@empty\relax\else{\g@addto@macro\emails{,\space}}\fi%
  \@ifnotempty{#1}{\g@addto@macro\emails{\textrm{(#1)}\space}}%
  \g@addto@macro\emails{#2}%
}
  \renewcommand{\setminus}{\mathbin{\backslash}}%
\pgfplotsset{
        compat=1.16,
    }
\theoremstyle{plain}
\newtheorem{theorem}{Theorem}[section]
\newtheorem{lemma}[theorem]{Lemma}
\newtheorem{proposition}[theorem]{Proposition}
\theoremstyle{definition}
\newtheorem{definition}[theorem]{Definition}
\newtheorem{example}[theorem]{Example}
\newtheorem{remark}[theorem]{Remark}
\newcommand{\mbbZ}{{\mathbb Z}}
\newcommand{\mcalC}{{\mathcal C}}
\newcommand{\mcalX}{{\mathcal X}}
\newcommand{\mfrako}{{\mathfrak o}}
\newcommand{\va}{{\mathbf a}}
\newcommand{\ve}{{\mathbf e}}
\newcommand{\vE}{{\mathbf E}}
\newcommand{\vF}{{\mathbf F}}
\DeclareMathOperator{\spec}{\operatorname{Spec}}
\DeclareMathOperator{\ab}{\operatorname{ab}}
\DeclareMathOperator{\ec}{\operatorname{ec}}
\DeclareMathOperator{\trace}{\operatorname{Tr}}
\DeclareMathOperator{\Aut}{\operatorname{Aut}}
\DeclareMathOperator{\outdeg}{\operatorname{outdeg}}
\DeclareMathOperator{\In}{\operatorname{In}}
\DeclareMathOperator{\red}{\operatorname{red}}
\DeclareMathOperator{\supp}{\operatorname{supp}}
\numberwithin{equation}{section}
\begin{document}

\title{On a trace formula of counting Eulerian cycles}
\author{Ye Luo} 
\address[A1]{School of Informatics, Xiamen University, Xiamen, Fujian 361005, China}
\email[A1]{luoye@xmu.edu.cn}

\subjclass[2020]{05C50, 05C38}
\keywords{Eulerian cycles, trace formulas, homological spectral graph theory}

\date{}

\begin{abstract}
We make connections of a counting problem of Eulerian cycles for undirected graphs to homological spectral graph theory, and formulate explicitly a trace formula that identifies the number of Eulerian circuits on an Eulerian graph with the trace sum of certain twisted vertex and edge adjacency matrices of the graph. Moreover, we show that reduction of computation can be achieved by taking into account symmetries related to  twisted adjacency matrices induced by spectral antisymmetry and graph automorphisms. 
\end{abstract}

\maketitle
\section{Introduction}

An Eulerian circuit on a graph $G$ is a circuit that traverses every edge of a graph $G$ exactly once and ends at the same vertex where it started. It is named after Leonhard Euler, who first studied them in the 18th century while solving the seven bridges of Königsberg problem. A graph is called an \emph{Eulerian graph} if it contains an Eulerian circuit. The \emph{Euler's Theorem} states that a connected graph is Eulerian if and only if the degree of every vertex is even. In his 1873 paper, Hierholzer  gave the first complete proof to Euler's Theorem, providing an efficient algorithm, known as Hierholzer's algorithm, for detecting an Eulerian circuit in linear time $O(|E|)$ where $|E|$ is the number of edges of $G$. 

Counting the number of Eulerian cylces (or Eulerian circuits up to cylic permutation) on an Eulerian graph is a more difficult problem. The case for a directed graph is solved using the famous BEST theorem~\cite{AB1951circuits} (Theorem~\ref{T:best}), which is named after de Bruijn, van Aardenne-Ehrenfest, Smith and Tutte.  The case for undirected graphs is significantly more intricate. By reduction to the problem of counting Eulerian orientations~\cite{MW1996on}, Brightwell and Winkler~\cite{BW2005counting} showed that the problem of counting Eulerian cycles for general graphs is \#P-complete. Later on, it was proved that  such a counting problem is also  \#P-complete for planar graphs~\cite{C2010counting} by Creed and 4-regular graphs by  Ge and {\v{S}}tefankovi{\v{c}}~\cite{GS2012the}. Concrete methods have only been found for special cases. Asympototic formulas have been found for complete graphs~\cite{MR1998asymptotic} by McKay and Robinson, for complete bipartite graphs~\cite{I2009asymptotic} and graphs with large algebraic connectivity \cite{I2011asymptotic} by Isaev. Polynomial-time algorithm has been found by Chebolu, Cryan and Martin to exactly count Eulerian cycles for generalized series-parallel graphs \cite{CCM2012Exact}. Recently,  some upper and lower bounds on the number of Eulerian cycles for general graphs have been discovered by Punzi, Conte, Grossi and Rizzi \cite{P2023bounding, PCGR2024refined}. 

In this paper, we make connections of such a couting problem of Eulerian cycles for general graphs to homological spectral graph theory developed recently by Luo and Roy~\cite{LR2024spectral}, which provides a framework of counting  circuits and cycles with abelianizations in certain homology classes of a graph based on  the spectral analysis of the twisted version of the adjacency matrices of the graph. In particular, we translate the problem of counting Eulerian cycles to the problem of counting cycles in a specific homology class of graphs, and present an explicit formula which relates the number of Eulerian cycles on an undirected Eulerian graph $G$, denoted by $\ec(G)$, to the trace sum of certain twisted vertex and edge adjacency matrices of $G$. 


Throughout this paper, a \emph{graph} means a connected finite graph allowing loops and multiple edges, and  a \emph{simple graph} means a connected finite graph without any loops or multiple edges. 

Consider a graph $G$ with vertex set $V(G)=\{v_1,\cdots,v_n\}$ and edge set $E(G)=\{e_1,\cdots,c_m\}$. The \emph{genus} $g$ of $G$ is defined as the first Betti number of $G$, i.e., $g=m-n+1$ where $m=|E|$ and $n=|V|$, which can be understood as the number of ``holes'' of $G$.  Assigning a direction to each of the edges, we derive an \emph{orientation} $\mfrako$ of $G$. For each edge $e_i\in E$, we can associate two oriented edges $\ve_i$ and $\ve_i^{-1}$ which are positively and negatively oriented with respect to $\mfrako$. Let $\vE_\mfrako(G) = \{\ve_1,\cdots,\ve_m\}$ be the set of all positively oriented edges of $G$ and  $\vE(G)= \{\ve_1,\cdots,\ve_m,\ve_1^{-1},\cdots,\ve_m^{-1}\}=\{\ve_1,\cdots,\ve_m,\ve_{m+1},\cdots,\ve_{2m}\}$ be the set of all oriented edges of $G$. In particular, $G_\mfrako = (V(G),\vE_\mfrako(G))$ is the \emph{digraph with respect to the orientation $\mfrako$}, and we say $G$ is the \emph{underlying graph} of the digraph $G_\mfrako$. For a subgraph $H$ of $G$, denote by $E(H)$ the set of edges of $H$ and $\vE_\mfrako(H)$ the set of edges of $H$ positively oriented with respect to $\mfrako$. Also, we write $E^c(H):=E(G)\setminus E(H)$ and $\vE^c_\mfrako(H):=\vE_\mfrako(G)\setminus \vE_\mfrako(H)$. 

For each oriented edge $\ve\in\vE(G)$, let $\ve(0)$ and $\ve(1)$ be the \emph{initial} and \emph{terminal} vertices of $\ve$ respectively. Then the \emph{inverse} of $\ve$, denoted by $\ve^{-1}$, satisfies $\ve^{-1}(0)=\ve(1)$ and $\ve^{-1}(1)=\ve(0)$. Note that if $\ve$ is a loop, then $\ve(0)=\ve(1)$. For vertices $v,w\in V(G)$, a \emph{walk} $P$ from $v$ to $w$ of length $l$ is a sequence of (not necessarily distinct) oriented edges  $\va_1\cdots\va_l$ such that $\va_1(0)=v$, $\va_l(1)=w$, and $\va_{i+1}(0)=\va_{i}(1)$ for $i=1,\cdots,l-1$.  If  $\va_l=\va_1^{-1}$, then we say $P$ has a \emph{tail}, and if $\va_{i+1}=\va_i^{-1}$ for some $1\leq i\leq N-1$, then we say $P$ has a \emph{backtrack}. A \emph{closed walk} at the \emph{base vertex} $v$ is a walk from $v$ to $v$. We also call a closed walk with no tail and backtracks a \emph{circuit}. Denote by  $\mcalC(G)$ (respectively $\widetilde{\mcalC}(G)$) the set of all circuits (respectively all closed walks) on $G$, and by $\mcalC_l(G)$ (respectively $\widetilde{\mcalC}_l(G)$) the set of all circuits (respectively all closed walks) of length $l$ on $G$. We call the equivalence classes of circuits under translation \emph{cycles}\footnote{Eulerian cycles are also referred to as Eulerian circuits or tours in the literature. In this paper, we distinguish between the terms of circuits and cycles, with circuits having a specified initial vertex while cycles do not.}.

For a graph $G=(V,E)$ of genus $g$, let $T$ be a spanning tree of $G$. Without loss of generality, we may assume $E^c(T) = \{e_1,\cdots,e_g\}$. Let $M_2(E^c(T) )$ be the free $\mbbZ/2\mbbZ$-module on $E^c(T)$. For each $\gamma = c_1\cdot e_1+\cdots c_g\cdot e_g\in M_2(E^c(T) )$ with $c_1,\cdots, c_g\in \mbbZ/2\mbbZ$, let $\sigma(\gamma):=\sum_{i=1}^g c_i$. 

The main result of this paper is the following explicit formula of counting the number of Eulerian cycles on $G$, denoted by $\ec(G)$, using homological spectral graph theory. 

\begin{theorem} \label{T:main}
Let $G$ be an Eulerian graph of genus $g$ with $m$ edges, and $T$ a spanning tree of $G$. Then we have the following trace formula for the number of Eulerian cycles on $G$:
\begin{align*}
\ec(G) &= \frac{1}{m\cdot 2^g}\sum_{\gamma\in M_2(E^c(T))}(-1)^{\sigma(\gamma)}\trace\left(W_{1,\gamma}^m\right) \\
&= \frac{1}{m\cdot 2^g}\sum_{\gamma\in M_2(E^c(T))}(-1)^{\sigma(\gamma)}\trace\left(A_\gamma^m\right).
\end{align*}
\end{theorem}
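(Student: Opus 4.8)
The plan is to read both trace sums as weighted counts of closed walks, apply character orthogonality to extract a homological constraint, and then exploit a rigidity phenomenon special to Eulerian graphs that pins the surviving walks down to exactly the Eulerian circuits.

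First I would invoke the homological spectral framework of Luo and Roy, in which the twisted matrices are constructed precisely so that their powers tally walks weighted by a $\mbbZ/2\mbbZ$-character. Writing $\chi_\gamma(W)=(-1)^{\langle\gamma,[W]\rangle}$ for the sign a closed walk $W$ acquires according to the parity with which it crosses the non-tree edges in $\supp(\gamma)$, the interpretations I would use are
\begin{align*}
\trace\left(A_\gamma^m\right) &= \sum_{W\in\widetilde{\mcalC}_m(G)}\chi_\gamma(W), &
\trace\left(W_{1,\gamma}^m\right) &= \sum_{W\in\mcalC_m(G)}\chi_\gamma(W),
\end{align*}
the first ranging over all closed walks of length $m$ and the second over non-backtracking ones. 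For a non-tree edge $e_i$ let $a_i(W)\in\{0,1\}$ record the parity of its number of traversals; then $\langle\gamma,[W]\rangle\equiv\sum_i c_i\,a_i(W)\pmod 2$ for $\gamma=\sum_i c_i e_i$.

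Next comes the orthogonality step. Since $(-1)^{\sigma(\gamma)}=(-1)^{\sum_i c_i}$, interchanging the summations and factoring coordinatewise gives, for each fixed $W$,
\begin{align*}
\frac{1}{2^g}\sum_{\gamma\in M_2(E^c(T))}(-1)^{\sigma(\gamma)}\chi_\gamma(W)
&= \frac{1}{2^g}\prod_{i=1}^g\sum_{c_i\in\{0,1\}}(-1)^{c_i\left(1+a_i(W)\right)} \\
&= \prod_{i=1}^g\frac{1-(-1)^{a_i(W)}}{2},
\end{align*}
which is $1$ when every $a_i(W)=1$ and $0$ otherwise. Hence both signed sums collapse to a count of length-$m$ walks (respectively circuits) that cross every non-tree edge an odd number of times.

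The crux is then the rigidity statement: any closed walk of length $m$ crossing every non-tree edge an odd number of times must traverse every edge of $G$ exactly once. Being closed, $W$ has its mod-$2$ crossing vector in the cycle space $Z_1(G,\mbbZ/2\mbbZ)$; the hypothesis forces that vector to contain all $g$ non-tree edges, and such a cycle is unique because restriction to $E^c(T)$ identifies $Z_1(G,\mbbZ/2\mbbZ)$ with $M_2(E^c(T))$. Here the Eulerian hypothesis is decisive: every vertex having even degree makes the all-edges vector itself a mod-$2$ cycle, so by uniqueness it is exactly the forced cycle. Thus $W$ crosses each of the $m$ edges an odd—hence positive—number of times, and as $W$ has total length $m$ every edge is crossed exactly once; in particular $W$ is automatically free of backtracks and tails.

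Finally I would assemble the pieces. The rigidity step identifies the surviving walks with the Eulerian circuits and shows the $A_\gamma$ and $W_{1,\gamma}$ versions agree, since every surviving closed walk is already a circuit; this yields the equality of the two displayed formulas. An Eulerian cycle is a primitive length-$m$ edge sequence, so it has exactly $m$ cyclic rotations, each a distinct rooted circuit counted once in the trace; dividing by $m$ converts the circuit count into $\ec(G)$. I expect the main obstacle to be making the walk-count interpretation of $\trace(A_\gamma^m)$ and $\trace(W_{1,\gamma}^m)$ fully rigorous—fixing the sign conventions in the twisted matrices and the precise bookkeeping of vertex- versus oriented-edge basepoints—rather than the orthogonality identity or the rigidity argument, both of which are clean.
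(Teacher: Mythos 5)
Your proposal is correct and follows essentially the same route as the paper: your walk-count interpretation of the twisted traces is Lemma~\ref{L:trace}, your coordinatewise orthogonality computation is Lemma~\ref{L:orth} specialized to $t=2$ and $\alpha=\mathbbm{1}_T$, and your rigidity step---using the restriction isomorphism of Lemma~\ref{L:rhoT} together with the observation that the Eulerian hypothesis places the all-edges vector $\mathbbm{1}$ in the mod-$2$ cycle space---is precisely Proposition~\ref{P:eulerian} combined with Lemma~\ref{L:eulerian}. The only difference is organizational: the paper packages the first two steps into the general counting formulas of Theorems~\ref{T:counting0} and~\ref{T:counting} before specializing to $\alpha=\rho_T^{-1}(\mathbbm{1}_T)$, whereas you run the argument directly for the Eulerian class.
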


Here,  $\trace(M)$ denotes the trace of the square matrix  $M$, and $A_\gamma$ and $W_{1,\gamma}$ are respectively the twisted vertex and edge adjacency matrices  with respect $\gamma$, as precisely defined in Definition~\ref{D:twist}. 

We also provide a similar formula for directed Eulerian graphs in Theorem~\ref{T:directed_eulerian}. In addition, we show that reduction of computation can be achieved by  taking into account certain symmetries related to  twisted adjacency matrices, including reduction based on spectral antisymmetry of twisted graph adjacency (Theorem~\ref{T:red_anti}),  reduction based on graph automorphisms (Theorem~\ref{T:red_counting_eulerian}) and reduction based on their combined effects (Theorem~\ref{T:red_counting_eulerian_combined}).

\section{Preliminaries}
In this section, we consider a graph $G$ of genus $g$, a spanning tree $T$ of $G$, and an orientation $\mfrako$ of $G$. Again, let $V(G)=\{v_1,\cdots,v_n\}$, $E(G)=\{e_1,\cdots,e_m\}$, $\vE_\mfrako(G) = \{\ve_1,\cdots,\ve_m\}$ and $\vE(G)= \{\ve_1,\cdots,\ve_m,\ve_1^{-1},\cdots,\ve_m^{-1}\}$ be the vertex set, the edge set, the set of all positively oriented edges with respected to $\mfrako$, and the set of all oriented edges of $G$. Let $\epsilon_t=\exp(2\pi i/t)$ be a primitive $t$-th root of unity.



\subsection{Homology groups of graphs}
For a commutative ring $R$ with multiplicatively identity $1_R$, let $C_1(G,R)$ be the free $R$-module on  $\vE(G)$ modulo $1\cdot \ve^{-1} = (-1)\cdot \ve$ for all $\ve\in \vE$. 
Note that $C_1(G,R)$ can be identified with the free $R$-module on $\vE_\mfrako$ for any orientation $\mfrako$. For $\alpha = \sum_{\ve\in \vE_\mfrako(G)}a_\ve\cdot \ve$ and $\beta = \sum_{\ve\in \vE_\mfrako(G)}b_\ve\cdot \ve$ in $C_1(G,R)$, write  $\langle\alpha,\beta\rangle := \sum_{\ve\in \vE_\mfrako(G)} a_\ve b_\ve$. 

\begin{definition}
An \emph{$R$-circulation} (or an $R$-homology class) on $G$ is an element $\alpha = a_1\cdot \ve_1+\cdots+a_m\cdot \ve_m$ of $C_1(G,R)$ satisfying the balancing condition: for each vertex $v\in V$, $\sum_{\ve_i(0)=v} a_i = \sum_{\ve_i(1)=v} = a_i$. The \emph{first homology group} of $G$ over $R$, denoted by $H_1(G,R)$, is a submodule of $C_1(G,R)$ consisting of all $R$-circulations on $G$. 
For each walk $P=\va_1\cdots \va_l$ on $G$, we say $P^{\ab}=\va_1+\cdots+\va_l \in C_1(G,\mbbZ)$ is the \emph{abelianization} of $P$. 
\end{definition}

An obvious fact is that the abelianization $C^{\ab}$ of a closed walk $C$ is always a $\mbbZ$-circulation.

Now let us consider another type of submodules of $C_1(G,R)$. For a finite set $S$, denote by $M_R(S)$ the free $R$-module on $S$. In this sense, we have $C_1(G,R)=M_R(\vE_\mfrako(G))\simeq R^m$. For a subset $\vF$ of $\vE_\mfrako(G)$, we can also identify $M_R(\vF)$ as a submodule of $C_1(G,R)$, i.e., $\sum_{\ve\in \vF}c_\ve\cdot \ve = \sum_{\ve\in \vF}c_\ve\cdot \ve+\sum_{\ve\in \vE_\mfrako(G)\setminus \vF} 0\cdot \ve$ for all $c_\ve\in R$. On the other hand, the restriction of $\alpha = \sum_{\ve\in \vE_\mfrako(G) }c_\ve\cdot \ve \in C_1(G,R)$ to $\vF$ is naturally defined as $\alpha|_{\vF} := \sum_{\ve\in \vF}c_\ve\cdot \ve\in M_R(\vF)$.

For a spanning tree $T$ of $G$, it is clear that $M_R(\vE_\mfrako^c(T))\simeq R^g$ as $g=m-n+1$. The following lemma implies that $H_1(G,R)$ is also isomorphic to $R^g$. 

\begin{lemma} \label{L:rhoT}
The restriction homomorphism $\rho_T:H_1(G,R) \xrightarrow{\sim} M_R(\vE_\mfrako^c(T)) $ sending $\alpha\in H_1(G,R)$ to $\alpha|_{\vE_\mfrako^c(T)}\in M_R(\vE_\mfrako^c(T))$  is an isomorphism. 
\end{lemma}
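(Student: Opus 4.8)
The plan is to build an explicit two-sided inverse rather than invoke dimension counting, since over an arbitrary commutative ring $R$ an injective $R$-linear map between free modules of equal rank need not be surjective. First I would note that $\rho_T$ is a well-defined $R$-module homomorphism: restriction to the coordinates indexed by $\vE_\mfrako^c(T)$ is manifestly $R$-linear, and it sends any element of $C_1(G,R)$ into $M_R(\vE_\mfrako^c(T))$, so in particular it does so on the submodule $H_1(G,R)$. It then remains to prove injectivity and surjectivity separately.

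For injectivity, I would describe the kernel. If $\alpha\in H_1(G,R)$ satisfies $\alpha|_{\vE_\mfrako^c(T)}=0$, then $\alpha$ is supported entirely on the tree edges $\vE_\mfrako(T)$, so the key claim is that the only $R$-circulation supported on a spanning tree is $0$. I would prove this by leaf-pruning induction on the number of edges of $T$. A tree with at least one edge has a leaf $u$, which is incident in $T$ to a unique edge $e$; because $\alpha$ vanishes on every non-tree edge, the balancing condition at $u$ involves only the coefficient of $e$, forcing that coefficient to be $0$. Deleting $u$ (and $e$) leaves a smaller tree on which $\alpha$ is still a circulation supported on tree edges, and the base case (a single vertex, no tree edges) gives $\alpha=0$, closing the induction.

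For surjectivity, I would construct an explicit section from fundamental cycles. For each chord $e_i\in E^c(T)$, adjoining $e_i$ to $T$ produces a unique cycle $C_i$, which I orient so as to traverse $\ve_i$ positively; let $z_i\in C_1(G,R)$ be its abelianization $C_i^{\ab}$. Each $z_i$ is an $R$-circulation, hence lies in $H_1(G,R)$, and since $C_i$ meets $E^c(T)$ only in the single chord $e_i$ we have $z_i|_{\vE_\mfrako^c(T)}=\ve_i$. The assignment $\ve_i\mapsto z_i$ extends $R$-linearly to a map $s:M_R(\vE_\mfrako^c(T))\to H_1(G,R)$ satisfying $\rho_T\circ s=\mathrm{id}$, which shows $\rho_T$ is surjective.

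The step I expect to require the most care is guaranteeing validity over a general ring $R$ rather than a field, which is precisely why I separate the combinatorial injectivity argument (leaf pruning, using only the balancing conditions) from the explicit fundamental-cycle section. Together, injectivity of $\rho_T$ and the identity $\rho_T\circ s=\mathrm{id}$ force $s=\rho_T^{-1}$, yielding the desired isomorphism and, as a byproduct, the fundamental cycles $\{z_i\}$ as an explicit $R$-basis of $H_1(G,R)$.
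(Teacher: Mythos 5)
Your proof is correct and follows essentially the same route as the paper: surjectivity via the fundamental cycles attached to the chords of $T$, whose abelianizations restrict to the basis of $M_R(\vE_\mfrako^c(T))$, and injectivity via the fact that an $R$-circulation supported on a spanning tree must vanish. The only differences are matters of detail the paper leaves implicit — you supply the leaf-pruning induction behind the kernel claim, which the paper merely asserts, and you verify the balancing condition for the fundamental cycles directly in $C_1(G,R)$ rather than invoking the identification $H_1(G,R)=R\otimes H_1(G,\mbbZ)$.
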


\begin{proof}
Suppose $\vE_\mfrako^c(T)=\{\ve_1,\cdots,\ve_g\}$. 
For each $\ve_i\in \vE_\mfrako^c(T)$, there is a unique circuit $C_i$ which is the concatenation of $\ve_i$ with the unique nonbactracking path on $T$ from $\ve_i(1)$ to $\ve_i(0)$. 
Then their abelianizations  $C^{\ab}_1,\cdots,C^{\ab}_g$ must be $\mbbZ$-circulations in $H_1(G,\mbbZ)$. Since $H_1(G,R)=R\otimes H_1(G,\mbbZ)$, we see that $1_R\otimes C^{\ab}_i\in H_1(G,R)$ and $\rho_T(1_R\otimes C^{\ab}_i)=1_R\cdot \ve_i$ where $1_R$ is the multiplicative identity of $R$. This means that $\rho_T$ is surjective. 

On the other hand, the injectivity of $\rho_T$ follows from the fact that the kernel of $\rho_T$ is trivial, i.e., $\sum_{i=g+1}^m a_i\cdot \ve_i\in H_1(G,R)$ if and only if $a_i=0$ for $i=1,\cdots,m$. 
\end{proof}

Note that the above argument also shows that $\{1_R\otimes C^{\ab}_i|i=1,\cdots,g\}$ is a basis of $H_1(G,R)$. In this paper, we are particularly interested in the case $R=\mbbZ/t\mbbZ$ for $t\geq 2$. For simplicity of notation, we also write $M_t(S) := M_{\mbbZ/t\mbbZ}(S)$. Then $H_1(G,\mbbZ/t\mbbZ) \simeq M_t(\vE_\mfrako^c(T) )\simeq(\mbbZ/t\mbbZ)^g$. 

\begin{remark} \label{R:caset2}
The case $t=2$ is special. In this case, the orientation does not matter, since we always have  $1\cdot \ve = 1\cdot \ve^{-1}$ as elements of $C_1(G,\mbbZ/2\mbbZ)$ for all oriented edges $\ve\in\vE(G)$.  Hence, we can simply write $M_2(F)=M_2(\vF)$ where $\vF$ is a subset of $\vE_\mfrako(G)$ and $F\subseteq E(G)$ is the set of underlying edges corresponding to the oriented edges in $\vF$. In particular, $M_2(E^c(T))=M_2(\vE_\mfrako^c(T))$ as in the statement of Theorem~\ref{T:main}.
\end{remark}

\subsection{Twisted vertex and edge adjacency matrices of graphs}

As in convention, the  \emph{vertex adjacency matrix} $A=(a_{ij})_{n\times n}$ of $G$ is an $n\times n$ matrix defined as follows: for each pair of vertices $v_i,v_j\in V$, $a_{ij}$ is the number of oriented edges with initial vertex $v_i$ and terminal vertex $v_j$. In particular, if $G$ is a simple graph, then its adjacency matrix is a 0-1 matrix with $a_{ij}=1$ if $v_i$ and $v_j$ are adjacent, and $a_{ij}=0$ otherwise. 

Analogously, we can define the  adjacency matrix for the oriented edges of $G$. In particular, for two oriented edges $\ve,\ve' \in\vE$, we say \emph{$\ve$ feeds into $\ve'$} if $\ve'\neq \ve^{-1}$ and the terminal vertex of $\ve$ is the same as the initial vertex of $\ve'$, i.e., $\ve(1)=\ve'(0)$. Then \emph{edge adjacency matrix} $W_1=(w_{ij})_{2m\times 2m}$ of $G$ is a $2m\times 2m$ matrix defined as follows  \footnote{The notation $W_1$ for edge adjacency matrix follows from \cite{T2011Zeta}.}: for each pair of oriented edges $\ve_i,\ve_j\in \vE$, if $\ve_i$ feeds into $\ve_j$, then $w_{ij}=1$, and otherwise $w_{ij}=0$. 

Now we will show how to ``twist'' the vertex and edge adjacency matrices by introducing phase shifts to the edges. 

\begin{definition} \label{D:twist}
 For $\alpha=a_1\cdot \ve_1+\cdots a_m\cdot \ve_m$ and $\gamma = \gamma_1\cdot \ve_1+\cdots \gamma_m\cdot \ve_m$ in $M_t(\vE_\mfrako(G))=C_1(G,\mbbZ/t\mbbZ)$, let $\chi_\gamma(\alpha):=\epsilon_t^{\langle\gamma,\alpha\rangle}=\epsilon_t^{\gamma_1a_1+\cdots+\gamma_ma_m}$. 

\begin{enumerate}[(i)]
\item The \emph{twisted vertex adjacency matrix} $A_\gamma=(a_{ij})_{n\times n}$ of  $G$ with respect to $\gamma$ is an $n\times n$ matrix defined as follows: 
 for each pair of (not necessarily distinct)  vertices $v_i,v_j\in V(G)$, $a_{ij}=\sum_{\ve\in \vE(G),\ve(0)=v_i,\ve(1)=v_j} \chi_\gamma(1\cdot \ve)$. 

\item The \emph{twisted edge adjacency matrix} $W_{1,\gamma}=(w_{ij})_{2m\times 2m}$ of  a graph $G$ with respect to $\gamma$ is a $2m\times 2m$ matrix defined as follows: for each pair of oriented edges $\ve_i,\ve_j\in \vE$, if $\ve_i$ feeds to $\ve_j$, then $w_{ij} = \epsilon_t^{\gamma_i}$ if $\ve_i\in \vE_\mfrako$ and $w_{ij} = \epsilon_t^{-\gamma_i}$ if $\ve_i^{-1}\in \vE_\mfrako$; otherwise $w_{ij}=0$.
\end{enumerate}

\end{definition}

Clearly the trivial twisted vertex and edge adjacency matrices  (meaning $\gamma=0$) are the original vertex and edge adjacency matrices respectively. In addition, one can easily verify that twisted vertex adjacency matrices $A_\gamma$ are always Hermitian, which means that all the eigenvalues of $A_\gamma$ are real. This is not true for the twisted edge adjacency matrices. However, it can be shown that all the non-real eigenvalues of $W_{1,\gamma}$ are composed of conjugate pairs. 

\subsection{BEST theorem} 
An \emph{arborescence} rooted at vertex $w\in V(G)$ compatible with an orientation $\mfrako$ of $G$ is an oriented spanning tree $T_\mfrako$ of $G$ such that there is a unique directed path on $T_\mfrako$ from any vertex $v\in V(G)$ to $w$. 

If $\mfrako$ is an Eulerian orientation on the Eulerian graph $G$, then we say $G_\mfrako$ is an Eulerian digraph. Equivalently, this means that the in-degree is identical to the out-degree at all vertices of  $G_\mfrako$. The following well-known theorem, usually called the BEST theorem, is due to de Bruijn and van Aardenne-Ehrenfest~\cite{AB1951circuits} with a special case earlierly shown by Smith and Tutte~\cite{ST1941unicursal}. 

\begin{theorem}[BEST theorem]\label{T:best}
Let $G$ be an Eulerian graph and $\mfrako$ an Eulerian orientation on $G$. Let $\In_w(G,\mfrako)$ be the number of arborescences rooted at $w\in V(G)$ compatible with $\mfrako$. Then $\In_w(G,\mfrako)$ is independent of the root vertex $w$ and 
$$\ec(G,\mfrako)=\In_w(G,\mfrako)\cdot \prod_{v\in V(G)}(\outdeg_\mfrako(v)-1)!$$ 
where $\outdeg_\mfrako(v)$ is the out-degree of $G_\mfrako$ at $v\in V(G)$. 
\end{theorem}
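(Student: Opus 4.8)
The plan is to prove the formula by the classical last-exit (or "BEST") bijection, which has the pleasant feature of yielding the root-independence of $\In_w(G,\mfrako)$ as a byproduct, so that no separate directed Matrix--Tree argument is required. First I would fix the root $w$ and, using that $G$ is connected with at least one edge (so $\outdeg_\mfrako(w)\ge 1$), single out a distinguished arc $a_0\in \vE_\mfrako(G)$ leaving $w$. I would then work with Eulerian circuits of the digraph $G_\mfrako$ that begin with $a_0$, that is, sequences $a_0\va_2\cdots\va_m$ traversing every arc of $G_\mfrako$ exactly once and returning to $w$. Since each Eulerian cycle of $G_\mfrako$ traverses $a_0$ exactly once, rotating it so that $a_0$ comes first gives a bijection between Eulerian cycles and Eulerian circuits beginning with $a_0$; thus it suffices to count the latter.

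The core lemma is the last-exit construction. Given such a circuit $C$, for each vertex $v$ let $\ell(v)$ be the outgoing arc at $v$ traversed last in $C$; I would show that $\{\ell(v):v\neq w\}$ is an arborescence $T$ rooted at $w$ compatible with $\mfrako$. The absence of a directed cycle among these arcs follows by examining, among the arcs of a hypothetical cycle, the one appearing latest in $C$, say $\ell(v_j)$ with head $v_{j+1}\neq w$: the walk must continue past $v_{j+1}$ (it cannot be the terminal arc, whose head is $w$), so $v_{j+1}$ is exited again strictly after $\ell(v_j)$, contradicting that $\ell(v_{j+1})$ is its last exit and appeared no later than $\ell(v_j)$. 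With $n-1$ arcs and no directed cycle, this forces a spanning in-tree toward $w$. Conversely, I would reconstruct $C$ from the data of $T$ together with, at each vertex $v$, a linear ordering of the arcs leaving $v$, subject to the constraints that $a_0$ is first at $w$ and that the tree arc $\ell(v)$ is last at each $v\neq w$; the reconstruction is the greedy walk from $w$ that always follows the next unused outgoing arc in the prescribed order.

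The main obstacle---and the heart of the proof---is verifying that this greedy walk is a genuine Eulerian circuit and that the two constructions are mutually inverse. Here I would argue by an in/out-degree balance count, using that the Eulerian orientation makes the in-degree of every vertex of $G_\mfrako$ equal to $\outdeg_\mfrako(v)$: the walk can get stuck only at $w$, since a non-root vertex just entered must (by balance of used in- and out-arcs) still have an unused exit. To show that getting stuck at $w$ actually exhausts \emph{all} arcs, I would suppose some arc were unused; the "tree-arc-last" constraint then forces the tree arc at every vertex possessing an unused exit to be itself unused, and following tree arcs toward $w$ from such a vertex of minimal tree-distance to the root produces a vertex with an unused incoming arc that is either a non-root vertex with an unused exit (contradicting minimality) or the root $w$ (contradicting the balance of entries and exits at $w$, which are equal). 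This closes the bijection, and a routine check confirms that the induced orderings and last-exit tree of the reconstructed circuit recover the original data.

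Finally, counting the free choices gives the formula: the orderings contribute $(\outdeg_\mfrako(w)-1)!$ at the root (as $a_0$ is pinned first) and $(\outdeg_\mfrako(v)-1)!$ at each $v\neq w$ (as the tree arc is pinned last), while the arborescence contributes the factor $\In_w(G,\mfrako)$, so
\[
\ec(G,\mfrako)=\In_w(G,\mfrako)\cdot\prod_{v\in V(G)}(\outdeg_\mfrako(v)-1)!.
\]
Since the left-hand side and the product over $V(G)$ are both manifestly independent of $w$ and the product is a positive integer, $\In_w(G,\mfrako)$ must be independent of the root $w$, establishing both assertions of the theorem simultaneously.
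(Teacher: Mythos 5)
Your proposal is correct, but there is no internal proof to compare it against: the paper states Theorem~\ref{T:best} as a known classical result, citing de Bruijn and van Aardenne-Ehrenfest \cite{AB1951circuits} with the earlier special case of Smith and Tutte \cite{ST1941unicursal}, and offers no proof of its own (it only \emph{uses} the theorem, e.g.\ in Example~\ref{E:eulerian}, via the directed Matrix--Tree computation of $\In_w$). What you have written is essentially the original last-exit bijection from that literature, and the delicate points are all handled soundly: the rotation argument reducing Eulerian cycles to circuits beginning with a fixed arc $a_0$ is valid because $a_0$ is traversed exactly once; your acyclicity argument for the last-exit arcs correctly exploits that a hypothetical directed cycle among $\{\ell(v):v\neq w\}$ avoids $w$ (every vertex of such a cycle has an out-arc in the set), so the latest-arc contradiction applies; the in/out balance shows the greedy walk can stall only at $w$, and at that moment all in-arcs of $w$ are used since exits equal entries; and your exhaustion argument, taking a vertex with an unused exit at minimal tree-distance and pushing its (necessarily unused, by tree-arc-last) tree arc toward the root, correctly contradicts either minimality or the balance at $w$. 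The counting $(\outdeg_\mfrako(w)-1)!$ at the root and $(\outdeg_\mfrako(v)-1)!$ elsewhere is right, and deriving the root-independence of $\In_w(G,\mfrako)$ as a corollary of the identity---rather than importing it from the directed Matrix--Tree theorem---is a legitimate and economical touch, since the factorial product is a positive quantity independent of $w$. One cosmetic remark: since the paper's graphs allow loops, you could note that a loop can never be a last-exit arc at a non-root vertex, but this is already subsumed by your latest-arc argument as a directed cycle of length one.
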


The BEST theorem provides an efficient method of counting Eulerian cycles on an Eulerian digraph $G_\mfrako$ by relating it to the number of arborescences on $G_\mfrako$, which can be computed using a directed version of Kirchhoff's matrix-tree theorem (see Example~\ref{E:eulerian}). Now let $\mathcal{EO}(G)$ be the set of Eulerian orientation of $G$. Then we have a general method of computing $\ec(G)$  using the identity $\ec(G)=\sum_{\mfrako\in \mathcal{EO}(G)}\ec(G,\mfrako)$. 


\section{Counting circuits in homology classes}
The primary counting tools we utilize to prove the main theorem (Theorem~\ref{T:main}) and its directed version (Theorem~\ref{T:directed_eulerian}) are the trace formulas stated in Theorem~\ref{T:counting0} and Theorem~\ref{T:counting} in this section. While these trace formulas can be considered as special applications of Theorem~3.12(b)(ii) in \cite{LR2024spectral} (called Formula~(\textasteriskcentered) here) provided the fact $H_1(G,\mbbZ/t\mbbZ)\simeq H_1(G,\mbbZ)/t H_1(G,\mbbZ)$  as stated in Lemma~2.5(c) of \cite{LR2024spectral}, we address the differences in their contexts here: (i) formulas in this section are formulated for both twisted vertex and edge adjacency matrices, whereas Formula~(\textasteriskcentered) is only for twisted edge adjacency matrix; (ii)  $M_t(\vF)$ in the statement of Theorem~\ref{T:counting0} is not necessarily a quotient of $H_1(G,\mbbZ)$ which is required in Formula~(\textasteriskcentered) (a quick example is that $M_t(\vE_\mfrako(G))=C_1(G,\mbbZ/t\mbbZ)$ is clearly not a quotient of $H_1(G,\mbbZ)$).  
Moreover, the proofs in this section are self-contained, not relying on the comprehensive framework established in \cite{LR2024spectral}. 

We first define counting functions based on two types of submodules of the $\mbbZ/t\mbbZ$-module $C_1(G,\mbbZ/t\mbbZ) =M_t(\vE_\mfrako(G) )$, i.e., $M_t(\vF)$ for some $\vF\subseteq \vE_\mfrako(G) $ and the first homology group $H_1(G,\mbbZ/t\mbbZ)$. 

For $\alpha = \sum_{\ve\in \vE_\mfrako(G)} a_\ve\cdot \ve \in C_1(G,\mbbZ)$, we will use the notation $\alpha_{[t]}:=\sum_{\ve\in \vE_\mfrako(G)} a'_\ve\cdot \ve \in C_1(G,\mbbZ/t\mbbZ) $ where $a'_\ve=a_\ve \mod t$. 
\begin{definition} \label{D:count-function}
We define the following counting functions of circuits:
\begin{enumerate}[(i)]
\item For a subset $\vF$ of $\vE_\mfrako(G)$ and $\alpha\in M_t(\vF)$, $N_{t,\vF}(\alpha,l)$ (respectively $\widetilde{N}_{t,\vF}(\alpha,l)$) is defined to be the number of circuits (respectively closed walks) $C$ of length $l$ such that $C^{\ab}_{[t]}|_\vF=\alpha$.
\item For a special case $\vF=\vE_\mfrako(G)$ of (i), we also write$N_t(\alpha,l):=N_{t,\vF}(\alpha,l)$ and $\widetilde{N}_t(\alpha,l)=\widetilde{N}_{t,\vF}(\alpha,l)$ for each $\alpha \in M_t(\vF)=C_1(G,\mbbZ/t\mbbZ)$. 
\end{enumerate}
\end{definition}

A quick result about Definition~\ref{D:count-function}(ii) is shown in the following lemma, which is simply a restatement of the fact that $C^{\ab}\in H_1(G,\mbbZ)$ for all $C\in \widetilde{\mcalC}(G)$. 
\begin{lemma}
For all $\alpha\in C_1(G,\mbbZ/t\mbbZ) \setminus H_1(G,\mbbZ/t\mbbZ)$ and any length $l$, $N_t(\alpha,l)=\widetilde{N}_t(\alpha,l)=0$. 
\end{lemma}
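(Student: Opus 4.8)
The plan is to argue by contrapositive: I will show that whenever either counting function is nonzero, the element $\alpha$ must in fact lie in $H_1(G,\mbbZ/t\mbbZ)$. Since every circuit is by definition a closed walk, we have $\mcalC_l(G)\subseteq \widetilde{\mcalC}_l(G)$ and hence $N_t(\alpha,l)\leq \widetilde{N}_t(\alpha,l)$ for every $\alpha$ and every length $l$; it therefore suffices to treat $\widetilde{N}_t$, and the statement for $N_t$ follows immediately.

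Suppose then that $\widetilde{N}_t(\alpha,l)>0$, so that there exists a closed walk $C$ of length $l$ on $G$ with $C^{\ab}_{[t]}=\alpha$. The starting point is the fact already recorded in the excerpt that the abelianization $C^{\ab}$ of any closed walk is a $\mbbZ$-circulation, i.e. $C^{\ab}\in H_1(G,\mbbZ)$. Concretely, each traversal of an oriented edge contributes $+1$ or $-1$ to the corresponding coefficient, and since $C$ returns to its base vertex the net contribution across every vertex cancels, which is exactly the balancing condition over $\mbbZ$.

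The one bridging step I need is that reduction modulo $t$ carries $H_1(G,\mbbZ)$ into $H_1(G,\mbbZ/t\mbbZ)$. This is immediate from the defining equations: the balancing condition at a vertex $v$, namely $\sum_{\ve_i(0)=v}a_i=\sum_{\ve_i(1)=v}a_i$, is a linear identity over $\mbbZ$, and applying the ring homomorphism $\mbbZ\to\mbbZ/t\mbbZ$ to both sides preserves it. Hence $C^{\ab}\in H_1(G,\mbbZ)$ forces $\alpha=C^{\ab}_{[t]}\in H_1(G,\mbbZ/t\mbbZ)$, contradicting the hypothesis $\alpha\in C_1(G,\mbbZ/t\mbbZ)\setminus H_1(G,\mbbZ/t\mbbZ)$. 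Therefore $\widetilde{N}_t(\alpha,l)=0$, and with it $N_t(\alpha,l)=0$.

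I do not expect any genuine obstacle here: the lemma is essentially a bookkeeping statement, and the only point requiring care is to invoke the correct direction of the reduction map $H_1(G,\mbbZ)\to H_1(G,\mbbZ/t\mbbZ)$ rather than any isomorphism such as $\rho_T$ from Lemma~\ref{L:rhoT}, since here $\alpha$ ranges over the possibly larger ambient module $C_1(G,\mbbZ/t\mbbZ)=M_t(\vE_\mfrako(G))$ and not over a fixed complement of a spanning tree.
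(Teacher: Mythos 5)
Your proof is correct and matches the paper's reasoning: the paper presents this lemma as ``simply a restatement of the fact that $C^{\ab}\in H_1(G,\mbbZ)$ for all closed walks $C$,'' and your contrapositive argument is exactly that fact made explicit, together with the (routine but correctly identified) observation that reduction modulo $t$ preserves the balancing condition and hence carries $H_1(G,\mbbZ)$ into $H_1(G,\mbbZ/t\mbbZ)$. Your care in noting that circuits are a subset of closed walks, and in avoiding the spanning-tree isomorphism $\rho_T$ since $\alpha$ ranges over all of $C_1(G,\mbbZ/t\mbbZ)$, is also consistent with the paper's setup.
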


The main results of this section are stated in the following two theorems: Theorem~\ref{T:counting0} provides a trace formula for $N_{t,\vF}(\alpha,l)$ and $\widetilde{N}_{t,\vF}(\alpha,l)$ with $\alpha\in M_t(\vF)$ (Definition~\ref{D:count-function}(i)), and Theorem~\ref{T:counting} provides a trace formula for $N_t(\alpha,l)$ and $\widetilde{N}_t(\alpha,l)$ with $\alpha\in H_1(G,\mbbZ/t\mbbZ)$ (Definition~\ref{D:count-function}(ii)). 

\begin{theorem}\label{T:counting0}
For a subset $\vF$ of $\vE_\mfrako(G)$ and $\alpha\in M_t(\vF)$,  we have
$$N_{t,\vF}(\alpha,l) = \frac{1}{t^{|\vF|}}\sum_{\gamma\in M_t(\vF)}\epsilon_t^{-\langle\gamma, \alpha\rangle}\trace\left(W_{1,\gamma}^l\right)$$
and
$$ \widetilde{N}_{t,\vF}(\alpha,l)= \frac{1}{t^{|\vF|}}\sum_{\gamma\in M_t(\vF)}\epsilon_t^{-\langle\gamma, \alpha\rangle}\trace\left(A_\gamma^l\right)$$
for any length $l$. 
\end{theorem}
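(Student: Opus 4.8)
The plan is to prove both identities by combining two ingredients: first, an interpretation of each trace $\trace(W_{1,\gamma}^l)$ and $\trace(A_\gamma^l)$ as a sum over circuits (respectively closed walks) of length $l$, each weighted by the character value $\chi_\gamma(C^{\ab})$; and second, an application of character orthogonality on the finite abelian group $M_t(\vF)\cong(\mbbZ/t\mbbZ)^{|\vF|}$ to invert this weighting and isolate $N_{t,\vF}(\alpha,l)$ and $\widetilde{N}_{t,\vF}(\alpha,l)$.

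For the first ingredient, I would expand the matrix power directly. Writing $\trace(A_\gamma^l)=\sum (A_\gamma)_{i_0 i_1}(A_\gamma)_{i_1 i_2}\cdots (A_\gamma)_{i_{l-1} i_0}$ and substituting the definition of each entry as a sum of $\chi_\gamma(1\cdot\ve)$ over oriented edges $\ve$ from $v_{i_k}$ to $v_{i_{k+1}}$, each nonzero term corresponds to a choice of closed walk $C=\va_1\cdots\va_l$ of length $l$, and its contribution is $\prod_{k=1}^l \chi_\gamma(1\cdot\va_k)$. The key point is multiplicativity: since $\chi_\gamma(\alpha)=\epsilon_t^{\langle\gamma,\alpha\rangle}$ and the pairing is bilinear, $\prod_k \chi_\gamma(1\cdot\va_k)=\epsilon_t^{\langle\gamma,\,\sum_k 1\cdot\va_k\rangle}=\chi_\gamma(C^{\ab})$, whence $\trace(A_\gamma^l)=\sum_{C\in\widetilde{\mcalC}_l(G)}\chi_\gamma(C^{\ab})$. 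For $W_{1,\gamma}$ the same expansion applies, but now the ``feeds into'' condition forces each consecutive pair to satisfy $\va_{k+1}\neq\va_k^{-1}$, and the cyclic closure of the trace additionally forces $\va_1\neq\va_l^{-1}$; together these are exactly the no-backtrack and no-tail conditions, so the nonzero terms run over circuits rather than arbitrary closed walks, giving $\trace(W_{1,\gamma}^l)=\sum_{C\in\mcalC_l(G)}\chi_\gamma(C^{\ab})$. I would also verify here that the prescribed entry $\epsilon_t^{\pm\gamma_i}$ of $W_{1,\gamma}$ coincides with $\chi_\gamma(1\cdot\va_k)$, using the relation $1\cdot\ve^{-1}=(-1)\cdot\ve$ in $C_1(G,\mbbZ/t\mbbZ)$ which sends the exponent of a negatively oriented step to $-\gamma_i$.

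For the second ingredient, since $\gamma$ is supported on $\vF$, the exponent only sees the restriction: $\langle\gamma,C^{\ab}\rangle=\langle\gamma,C^{\ab}_{[t]}|_\vF\rangle$. I would therefore regroup the circuits (resp.\ closed walks) according to the value $\beta:=C^{\ab}_{[t]}|_\vF\in M_t(\vF)$, obtaining $\trace(W_{1,\gamma}^l)=\sum_{\beta\in M_t(\vF)}N_{t,\vF}(\beta,l)\,\epsilon_t^{\langle\gamma,\beta\rangle}$ and analogously for $A_\gamma$ with $\widetilde{N}_{t,\vF}$. Multiplying by $\epsilon_t^{-\langle\gamma,\alpha\rangle}$ and summing over $\gamma\in M_t(\vF)$, I interchange the two sums and apply the orthogonality relation $\sum_{\gamma\in M_t(\vF)}\epsilon_t^{\langle\gamma,\beta-\alpha\rangle}=t^{|\vF|}$ if $\beta=\alpha$ and $0$ otherwise (which factors as a product over $\ve\in\vF$ of geometric sums of $t$-th roots of unity). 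This collapses the $\beta$-sum to the single term $\beta=\alpha$ and yields both claimed formulas after dividing by $t^{|\vF|}$.

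The routine parts are the matrix-power expansion and the orthogonality computation. The one step that needs genuine care is the combinatorial bookkeeping for $W_{1,\gamma}$: checking that the cyclic non-backtracking constraint imposed by the trace of $W_{1,\gamma}^l$ corresponds precisely to the definition of a circuit (no tail, no backtracks), that each circuit of length $l$ is counted exactly once with weight $\chi_\gamma(C^{\ab})$, and that the orientation-dependent exponent in the definition of $W_{1,\gamma}$ is correctly identified with $\chi_\gamma$ on negatively oriented steps.
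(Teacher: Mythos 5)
Your proposal is correct and follows essentially the same route as the paper: the paper's Lemma~\ref{L:trace} establishes exactly your trace expansions $\trace\left(W_{1,\gamma}^l\right)=\sum_{C\in \mcalC_l}\chi_\gamma(C)$ and $\trace\left(A_\gamma^l\right)=\sum_{C\in \widetilde{\mcalC}_l}\chi_\gamma(C)$ (including the identification of the no-backtrack and no-tail conditions with the ``feeds into'' relation and the cyclic closure, and of the $\epsilon_t^{\pm\gamma_i}$ entries with $\chi_\gamma$ on oriented steps), and its Lemma~\ref{L:orth} is precisely your orthogonality relation, applied after interchanging the sums over $\gamma$ and $C$. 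Your intermediate regrouping of circuits by the class $\beta=C^{\ab}_{[t]}|_\vF$ is only a cosmetic reorganization of that same interchange, so the two arguments coincide.
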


\begin{theorem}\label{T:counting}
For a graph $G$ of genus $g$, a spanning tree $T$ of  $G$ and $\alpha\in H_1(G,\mbbZ/t\mbbZ)$, we have
$$N_t(\alpha,l) = \frac{1}{t^g}\sum_{\gamma\in M_t(\vE_\mfrako^c(T))}\epsilon_t^{-\langle\gamma, \alpha\rangle}\trace\left(W_{1,\gamma}^l\right)$$
and
$$ \widetilde{N}_t(\alpha,l)= \frac{1}{t^g}\sum_{\gamma\in M_t(\vE_\mfrako^c(T))}\epsilon_t^{-\langle\gamma, \alpha\rangle}\trace\left(A_\gamma^l\right)$$
for any length $l$. 
\end{theorem}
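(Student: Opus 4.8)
The plan is to obtain Theorem~\ref{T:counting} as the special case $\vF=\vE_\mfrako^c(T)$ of Theorem~\ref{T:counting0}, with Lemma~\ref{L:rhoT} used to identify the two families of counting functions. Since $|\vE_\mfrako^c(T)|=g$, taking $\vF=\vE_\mfrako^c(T)$ in Theorem~\ref{T:counting0} already turns the prefactor $1/t^{|\vF|}$ into $1/t^g$ and makes $\gamma$ range over $M_t(\vE_\mfrako^c(T))$, so the right-hand sides are of the desired shape. What remains is to match the left-hand sides, i.e.\ to prove
$$N_t(\alpha,l)=N_{t,\vE_\mfrako^c(T)}\big(\alpha|_{\vE_\mfrako^c(T)},l\big)\qquad\text{and}\qquad\widetilde{N}_t(\alpha,l)=\widetilde{N}_{t,\vE_\mfrako^c(T)}\big(\alpha|_{\vE_\mfrako^c(T)},l\big)$$
for every $\alpha\in H_1(G,\mbbZ/t\mbbZ)$, and to check that the phase exponents agree.

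First I would record the two inputs. For any circuit or closed walk $C$ its abelianization $C^{\ab}$ is a $\mbbZ$-circulation, hence $C^{\ab}_{[t]}\in H_1(G,\mbbZ/t\mbbZ)$; and by Lemma~\ref{L:rhoT} the restriction map $\rho_T\colon H_1(G,\mbbZ/t\mbbZ)\to M_t(\vE_\mfrako^c(T))$, $\beta\mapsto\beta|_{\vE_\mfrako^c(T)}$, is an isomorphism, in particular injective. Consequently, for classes in $H_1(G,\mbbZ/t\mbbZ)$ equality of full classes is equivalent to equality of their restrictions to $\vE_\mfrako^c(T)$. Applying this with the class $C^{\ab}_{[t]}$ shows that a circuit $C$ satisfies $C^{\ab}_{[t]}=\alpha$ exactly when $C^{\ab}_{[t]}|_{\vE_\mfrako^c(T)}=\alpha|_{\vE_\mfrako^c(T)}$, so the two sets of circuits counted by Definition~\ref{D:count-function}(ii) and by Definition~\ref{D:count-function}(i) with $\vF=\vE_\mfrako^c(T)$ coincide; this gives the first identity above, and the identical argument for closed walks gives the second.

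Next I would verify the exponents. Every $\gamma\in M_t(\vE_\mfrako^c(T))$ is supported on $\vE_\mfrako^c(T)$, so the defining sum $\langle\gamma,\alpha\rangle=\sum_{\ve\in\vE_\mfrako(G)}\gamma_\ve\alpha_\ve$ only involves the coordinates of $\alpha$ on $\vE_\mfrako^c(T)$; hence $\langle\gamma,\alpha\rangle=\langle\gamma,\alpha|_{\vE_\mfrako^c(T)}\rangle$. Substituting this identity of pairings together with the identification of counting functions into the $\vF=\vE_\mfrako^c(T)$ instance of Theorem~\ref{T:counting0} produces both trace formulas of Theorem~\ref{T:counting}.

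I do not anticipate a genuine obstacle, since the statement is essentially a reindexing of Theorem~\ref{T:counting0} once injectivity of restriction on homology classes is in hand. The single point that deserves care is that the equivalence ``$C^{\ab}_{[t]}=\alpha\iff C^{\ab}_{[t]}|_{\vE_\mfrako^c(T)}=\alpha|_{\vE_\mfrako^c(T)}$'' genuinely uses the hypothesis $\alpha\in H_1(G,\mbbZ/t\mbbZ)$, so that both sides lie in the domain of the isomorphism $\rho_T$. Without this hypothesis the restriction is merely the projection onto $M_t(\vE_\mfrako^c(T))$ and has a large kernel, and this is exactly the gap separating Theorem~\ref{T:counting} from the more general Theorem~\ref{T:counting0}.
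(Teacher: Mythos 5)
Your proposal is correct and is essentially the paper's own proof: the paper likewise specializes Theorem~\ref{T:counting0} to $\vF=\vE_\mfrako^c(T)$, invokes the isomorphism $\rho_T$ of Lemma~\ref{L:rhoT} to identify $N_t(\alpha,l)$ with $N_{t,\vE_\mfrako^c(T)}(\alpha|_{\vE_\mfrako^c(T)},l)$ (and similarly for $\widetilde{N}_t$), and uses the pairing identity $\langle\gamma,\alpha\rangle=\langle\gamma,\alpha|_{\vE_\mfrako^c(T)}\rangle$. Your write-up in fact makes explicit the injectivity step behind the identification of counting functions, which the paper leaves implicit.
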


Before proving these theorems, we will first present a counting lemma (Lemma~\ref{L:circuit-counting}) which provides a trace characterization of  the counting problem of circuits and closed paths, and then show how the twisting of the adjacency matrices affects the trace functions. 

Again, let $\mcalC_l$ (respectively $\widetilde{\mcalC}_l$) be the set of all circuits (respectively all closed walks) of length $l$ on $G$. Then the following lemma states a standard fact, saying that the cardinality of $\mcalC_l$ (respectively $\widetilde{\mcalC}_l$) is essentially the trace of the $l$-th power of the edge (respectively vertex) adjacency matrix of $G$. For a square matrix $M$, denote by $\spec M$ the \emph{spectrum} of $M$, i.e., the multi-set of all eigenvalues of $M$. 

\begin{lemma}\label{L:circuit-counting}
Let $A$ and $W_1$ be the vertex and edge adjacency matrices of graph $G$ respectively. For any length $l$, we have
\begin{enumerate}[(a)]
\item $|\mcalC_l|=\trace\left(W_1^l\right)=\sum_{\lambda\in \spec W_1}\lambda^l$;
\item $|\widetilde{\mcalC}_l|=\trace\left(A^l\right)=\sum_{\lambda\in \spec A}\lambda^l$.
\end{enumerate}
\end{lemma}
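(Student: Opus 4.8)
The plan is to prove both identities by the standard combinatorial interpretation of powers of an adjacency matrix, treating the edge-adjacency case (a) as the essential ingredient and the vertex-adjacency case (b) as a warm-up. In each part the rightmost equality $\trace(M^l)=\sum_{\lambda\in\spec M}\lambda^l$ is pure linear algebra: over $\mbbC$ the eigenvalues of $M^l$ are exactly the $l$-th powers of the eigenvalues of $M$ counted with multiplicity (via Schur triangularization or the Jordan form), and the trace is the sum of eigenvalues. So the real content lies in the first equality of each part.

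For part (b), I would first record the combinatorial meaning of entries of powers of $A$. Since $(A)_{ij}$ is the number of oriented edges from $v_i$ to $v_j$, expanding $(A^l)_{ij}=\sum (A)_{ii_1}(A)_{i_1i_2}\cdots(A)_{i_{l-1}j}$ shows that $(A^l)_{ij}$ counts sequences of oriented edges $\va_1\cdots\va_l$ with $\va_1(0)=v_i$, $\va_l(1)=v_j$, and $\va_{k+1}(0)=\va_k(1)$, i.e. walks of length $l$ from $v_i$ to $v_j$. Counting oriented edges with multiplicity automatically accounts for loops and multiple edges. Setting $i=j$ and summing over the diagonal, $\trace(A^l)=\sum_i (A^l)_{ii}$ counts exactly the closed walks of length $l$, which gives $|\widetilde{\mcalC}_l|$.

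For part (a) the argument is parallel, but I must verify that the ``feeds into'' relation built into $W_1$ encodes precisely the extra constraints distinguishing a circuit from an arbitrary closed walk. Expanding the trace cyclically gives
\[
\trace(W_1^l)=\sum_{i_1,\dots,i_l}(W_1)_{i_1i_2}(W_1)_{i_2i_3}\cdots(W_1)_{i_li_1},
\]
and each nonzero summand corresponds to a sequence of oriented edges $\ve_{i_1},\dots,\ve_{i_l}$ in which $\ve_{i_k}$ feeds into $\ve_{i_{k+1}}$ for every $k$, with indices read cyclically. I would then check that such a sequence carries the same data as a circuit $\va_1\cdots\va_l$ of length $l$: the relation $\ve_{i_k}(1)=\ve_{i_{k+1}}(0)$ yields a closed walk, the requirement $\ve_{i_{k+1}}\neq\ve_{i_k}^{-1}$ for $k=1,\dots,l-1$ rules out backtracks, and the wrap-around factor $(W_1)_{i_li_1}$ forces $\ve_{i_1}\neq\ve_{i_l}^{-1}$, which is exactly the no-tail condition. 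Hence each nonzero term contributes $1$ to a distinct circuit and conversely, so $\trace(W_1^l)=|\mcalC_l|$.

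The one point demanding care, and the main obstacle, is this bookkeeping in part (a): one must confirm that the length conventions match, so that the $l$ factors in the cyclic product correspond to exactly $l$ oriented edges and hence to length $l$, and that the single relation ``feeds into'' simultaneously encodes both the no-backtrack condition along the sequence and, at the wrap-around index, the no-tail condition. Everything else is routine.
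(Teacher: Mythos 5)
Your proposal is correct and follows essentially the same route as the paper: expand the diagonal entries (equivalently, the cyclic trace product) of $W_1^l$ and $A^l$, identify nonzero terms with circuits and closed walks respectively, and dispose of the spectral identity $\trace(M^l)=\sum_{\lambda\in \spec M}\lambda^l$ by the Jordan/triangularization argument. Your explicit check that the wrap-around factor $(W_1)_{i_l i_1}$ enforces the no-tail condition is a point the paper leaves implicit inside the phrase ``is a circuit,'' so your write-up is, if anything, slightly more careful on exactly the step that needs care.
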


\begin{proof}
Let $A=(a_{ij})_{n\times n}$ and $W_1=(w_{ij})_{2m\times 2m}$. 

The diagonal entries of $W_1^l=(w^{(l)}_{ij})_{2m\times 2m}$ can be written as $$w^{(l)}_{ii}=\sum_{k_1,\cdots,k_{l-1}}w_{ik_1}w_{k_1k_2}\cdots w_{k_{l-1}i}.$$  Note that $w_{ik_1}w_{k_1k_2}\cdots w_{k_{l-1}i}=1$ if and only if $\ve_i\ve_{k_1}\cdots\ve_{k_{l-1}}$ is a circuit; otherwise $w_{ik_1}w_{k_1k_2}\cdots w_{k_{l-1}i}=0$. Therefore, $\trace\left(W_1^l\right)=\sum_{i=1}^{2m}w^{(l)}_{ii}$ exactly counts the number of all circuits of length $l$ on $G$, i.e., $\trace\left(W_1^l\right) = |\mcalC_l|$.

The diagonal entries of  $A^l=(a^{(l)}_{ij})_{n\times n}$ can be written as $$a^{(l)}_{ii}=\sum_{k_1,\cdots,k_{l-1}}a_{ik_1}a_{k_1k_2}\cdots a_{k_{l-1}i}.$$
Note that by definition of vertex adjacency matrix, $a_{ik_1}a_{k_1k_2}\cdots a_{k_{l-1}i}$ is the number of closed walks of length $l$ going through the vertices $v_i,v_{k_1},\cdots,v_{k_{l-1}},v_i$ consecutively. Therefore, $\trace\left(A^l\right) = \sum_{v\in V}a^{(l)}_{vv}$ exactly counts the number of all closed walks of length $l$ on $G$, i.e., $\trace\left(A^l\right) = |\widetilde{\mcalC}_l|$. Analogously, we can show that $\trace\left(W_1^l\right) = |\mcalC_l|$. Note that in general $\trace\left(A^l\right) \geq \trace\left(W_1^l\right)$, since the former counts all the closed walks (allowing backtracks and tail) while the latter only counts the circuits, i.e., the closed paths without backtracks and tail. 

It remains to show that $\trace\left(A^l\right)=\sum_{\lambda\in \spec A}\lambda^l$ and $\trace\left(W_1^l\right)=\sum_{\lambda\in \spec W_1}\lambda^l$. But this follows from a standard fact in linear algebra, i.e., $\trace(M^l)=\sum_{\lambda\in \spec M}\lambda^l$ for a square matrix $M$ (a simple proof can be given based on the Jordan canonical form of $M$). 
\end{proof}

 For $\vF\subseteq \vE_\mfrako$, let $M_t(\vF)$ be naturally identified as a submodule of $M_t(\vE_\mfrako)=C_1(G,\mbbZ/t\mbbZ)$. For each $\gamma\in M_t(\vF)$ and $C\in\widetilde{\mcalC}$, we let $\chi_\gamma(C) :=\chi_\gamma(C^{\ab})=\epsilon_t^{\langle\gamma, C^{\ab}_{[t]}\rangle}=\epsilon_t^{\langle\gamma, C^{\ab}_{[t]}|_\vF\rangle}$. The following lemma is a ``twisted'' version of Lemma~\ref{L:circuit-counting}.

\begin{lemma} \label{L:trace}
We have the following identities:
\begin{enumerate}[(a)]
\item $\trace\left(W_{1,\gamma}^l\right)$ is real, and $\trace\left(W_{1,\gamma}^l\right)=\sum_{C\in \mcalC_l}\chi_\gamma(C)=\sum_{\lambda\in \spec W_{1,\gamma}}\lambda^l$;
\item $\trace\left(A_\gamma^l\right)$ is real, and $\trace\left(A_\gamma^l\right)=\sum_{\widetilde{\mcalC}_l}\chi_\gamma(C)=\sum_{\lambda\in \spec A_\gamma}\lambda^l$;
\end{enumerate}
\end{lemma}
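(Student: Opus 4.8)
The plan is to follow the same combinatorial skeleton as the proof of Lemma~\ref{L:circuit-counting}, now keeping track of the phase factors introduced by the twist. The three asserted equalities split naturally: the identity $\trace(M^l)=\sum_{\lambda\in\spec M}\lambda^l$ for $M=W_{1,\gamma}$ or $M=A_\gamma$ is the same standard linear-algebra fact used in Lemma~\ref{L:circuit-counting} (via the Jordan form), so it requires no extra work; the substance lies in the middle equality relating the trace to the weighted circuit/closed-walk count, together with the reality claim.

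For part (a), I would expand $\trace(W_{1,\gamma}^l)=\sum_{i=1}^{2m}(W_{1,\gamma}^l)_{ii}$ with $(W_{1,\gamma}^l)_{ii}=\sum_{k_1,\dots,k_{l-1}}w_{ik_1}w_{k_1k_2}\cdots w_{k_{l-1}i}$. By the definition of the ``feeds into'' relation, a term is nonzero precisely when $C=\ve_i\ve_{k_1}\cdots\ve_{k_{l-1}}$ is a circuit of length $l$ — exactly the correspondence already established in Lemma~\ref{L:circuit-counting}(a). The only new point is to evaluate the nonzero product. The key observation is that the entry $w_{ab}$ carries the phase $\chi_\gamma(\ve_a)=\epsilon_t^{\langle\gamma,\ve_a\rangle}$ attached to its \emph{source} edge $\ve_a$; indeed the convention $w_{ab}=\epsilon_t^{\pm\gamma_a}$ prescribed in Definition~\ref{D:twist} according to the orientation of $\ve_a$ is exactly $\epsilon_t^{\langle\gamma,\ve_a\rangle}$, using $\langle\gamma,\ve_s^{-1}\rangle=-\gamma_s$ in $C_1(G,\mbbZ/t\mbbZ)$. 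Along the circuit the sources run through all $l$ oriented edges $\va_1,\dots,\va_l$ of $C$ exactly once, so by multiplicativity of $\chi_\gamma$ the product equals $\prod_{j}\chi_\gamma(\va_j)=\chi_\gamma\big(\textstyle\sum_j\va_j\big)=\chi_\gamma(C^{\ab})=\chi_\gamma(C)$. Summing over $i$ and over all admissible index tuples then yields $\sum_{C\in\mcalC_l}\chi_\gamma(C)$.

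Part (b) is handled in the same way, using that each entry of $A_\gamma$ is itself a phase sum $a_{ij}=\sum_{\ve(0)=v_i,\,\ve(1)=v_j}\chi_\gamma(\ve)$. Expanding $(A_\gamma^l)_{ii}$ and distributing these internal sums produces one term for every closed walk $C=\va_1\cdots\va_l$ based at $v_i$ (with backtracks and tails allowed, as in Lemma~\ref{L:circuit-counting}(b)), weighted by $\prod_j\chi_\gamma(\va_j)=\chi_\gamma(C^{\ab})=\chi_\gamma(C)$; summing over $i$ gives $\sum_{C\in\widetilde{\mcalC}_l}\chi_\gamma(C)$.

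Finally, for the reality claims I would introduce the orientation-reversing involution $C\mapsto C^{-1}$ sending $\va_1\cdots\va_l$ to $\va_l^{-1}\cdots\va_1^{-1}$. This preserves length and maps $\mcalC_l$ to $\mcalC_l$ and $\widetilde{\mcalC}_l$ to $\widetilde{\mcalC}_l$ (reversing leaves the backtrack and tail conditions unchanged), and it satisfies $(C^{-1})^{\ab}=-C^{\ab}$, hence $\chi_\gamma(C^{-1})=\epsilon_t^{-\langle\gamma,C^{\ab}\rangle}=\overline{\chi_\gamma(C)}$. Thus each weighted sum is invariant under complex conjugation and is therefore real. (Alternatively, reality of $\trace(A_\gamma^l)$ follows at once from the Hermitian-ness of $A_\gamma$, and reality of $\trace(W_{1,\gamma}^l)$ from the conjugate-pairing of the non-real eigenvalues of $W_{1,\gamma}$ noted after Definition~\ref{D:twist}.) I expect the only genuinely delicate point to be the phase bookkeeping in part (a): one must verify that the source-based convention makes every oriented edge of the circuit contribute its character exactly once, including the wrap-around factor $w_{k_{l-1}i}$, so that the accumulated phase is precisely $\chi_\gamma(C^{\ab})$ rather than a shifted or doubled version.
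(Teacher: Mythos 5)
Your proposal is correct and follows essentially the same route as the paper: expand the diagonal entries of the $l$-th power, identify nonzero terms with circuits (resp.\ closed walks), verify that the source-based phase convention makes each term contribute exactly $\chi_\gamma(C)$, and invoke the standard fact $\trace(M^l)=\sum_{\lambda\in\spec M}\lambda^l$. The only cosmetic difference is in the reality claims, where the paper uses Hermitian-ness of $A_\gamma$ for part (b) and the pairing $C\leftrightarrow C^{-1}$ with $\chi_\gamma(C^{-1})=\overline{\chi_\gamma(C)}$ only for part (a), while you run the involution argument uniformly (and note the Hermitian alternative yourself) --- both are valid.
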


\begin{proof}
Let $A_\gamma=(a_{ij})_{n\times n}$ and $W_{1,\gamma}=(w_{ij})_{2m\times 2m}$. 

As in the proof of Lemma~\ref{L:circuit-counting}, the diagonal entries of $W_{1,\gamma}^l=\left(w^{(l)}_{ij}\right)_{2m\times 2m}$ can be written as $$w^{(l)}_{ii}=\sum_{k_1,\cdots,k_{l-1}}w_{ik_1}w_{k_1k_2}\cdots w_{k_{l-1}i}.$$  Again $w_{ik_1}w_{k_1k_2}\cdots w_{k_{l-1}i}$ is nonzero if and only if $\ve_i\ve_{k_1}\cdots\ve_{k_{l-1}}$ is a circuit.  However, the contribution from such a circuit is not necessarily $1$ as in Lemma~\ref{L:circuit-counting}.  Actually, by the definition of twisted edge adjacency matrix (Definition~\ref{D:twist}), one may verify that the contribution of $C$ is exactly $\chi_\gamma(C)$.  
Therefore, $\trace\left(W_{1,\gamma}^l\right)=\sum_{C\in \mcalC_l}\chi_\gamma(C)$.

 The diagonal entries of $A_\gamma^l$ can be written as $$a^{(l)}_{vv}=\sum_{u_1,\cdots,u_{l-1}\in V}a_{vu_1}a_{u_1u_2}\cdots a_{u_{l-1}v}.$$ 
 Analogously, $a^{(l)}_{vv}$ can be interpreted as the sum of $\chi(C)$ with $C$ running through all closed paths with base vertex $v$.  Therefore, we have $\trace\left(A_\gamma^l\right)=\sum_{\widetilde{\mcalC}_l}\chi_\gamma(C)$. 

The identities $\trace\left(A_\gamma^l\right)=\sum_{\lambda\in \spec A_\gamma}\lambda^l$ and $\trace\left(W_{1,\gamma}^l\right)=\sum_{\lambda\in \spec W_{1,\gamma}}\lambda^l$ also follow from the standard linear algebraic fact that $\trace(M^l)=\sum_{\lambda\in \spec M}\lambda^l$ for a square matrix $M$. 

It remains to show that $\trace\left(A_\gamma^l\right)$ and $\trace\left(W_{1,\gamma}^l\right)$  are all reals. Note that the eigenvalues of $A_\gamma$ are all reals since $A_\gamma$ is Hermitian. This implies that $\trace\left(A_\gamma^l\right)$ must also be real for all $l$. For $\trace\left(W_{1,\gamma}^l\right)$, we note that circuits always come in opposite pairs, i.e., if  $C$ is a circuit, then reversing the traveling direction of $C$, we always get another circuit $C^{-1}$ of the same length. Moreover, it is straightforward to verify that $\chi_\gamma(C^{-1})$ is always the complex conjugate of $\chi_\gamma(C)$. Therefore, the identity $\trace\left(W_{1,\gamma}^l\right)=\sum_{C\in \mcalC_l}\chi_\gamma(C)$ shows that $\trace\left(W_{1,\gamma}^l\right)$ must always be real. 
\end{proof}

\begin{lemma} \label{L:orth}
For all $\vF\subseteq \vE_\mfrako(G)$, $\alpha\in M_t(\vF)$ and $C\in\widetilde{\mcalC}$, we have the following orthogonal relation:
$$\sum_{\gamma\in M_t(\vF)}\epsilon_t^{-\langle\gamma, \alpha\rangle}\chi_\gamma(C)  =\begin{cases}
      t^{|\vF|}, & \text{if}\ C^{\ab}_{[t]}|_\vF=\alpha; \\
      0, & \text{otherwise}.
    \end{cases}$$
\end{lemma}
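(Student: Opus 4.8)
The plan is to recognize this as a standard orthogonality relation for characters of the finite abelian group $M_t(\vF) \cong (\mbbZ/t\mbbZ)^{|\vF|}$, and to reduce the claimed identity to the elementary geometric-series fact that $\sum_{a \in \mbbZ/t\mbbZ} \epsilon_t^{ak} = t$ if $k \equiv 0 \pmod t$ and $0$ otherwise. First I would unwind the definitions on the left-hand side. By the definition $\chi_\gamma(C) = \epsilon_t^{\langle \gamma, C^{\ab}_{[t]}|_\vF\rangle}$ established just before the lemma, and using bilinearity of $\langle\,\cdot\,,\cdot\,\rangle$ on $M_t(\vF)$, the summand becomes
\begin{equation*}
\epsilon_t^{-\langle \gamma,\alpha\rangle}\chi_\gamma(C) = \epsilon_t^{\langle\gamma,\, C^{\ab}_{[t]}|_\vF - \alpha\rangle}.
\end{equation*}
Writing $\beta := C^{\ab}_{[t]}|_\vF - \alpha \in M_t(\vF)$, the whole sum is $\sum_{\gamma \in M_t(\vF)} \epsilon_t^{\langle \gamma, \beta\rangle}$, so everything reduces to evaluating this single character sum and showing it equals $t^{|\vF|}$ when $\beta = 0$ and $0$ otherwise.

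Next I would factor the sum across the free coordinates. Enumerate $\vF = \{\ve_{i_1},\dots,\ve_{i_r}\}$ with $r = |\vF|$, write $\gamma = \sum_j \gamma_{i_j}\ve_{i_j}$ and $\beta = \sum_j \beta_{i_j}\ve_{i_j}$ with each coefficient in $\mbbZ/t\mbbZ$. Then $\langle\gamma,\beta\rangle = \sum_j \gamma_{i_j}\beta_{i_j}$, so the exponential factors and the sum over $\gamma \in M_t(\vF) = (\mbbZ/t\mbbZ)^r$ splits as a product:
\begin{equation*}
\sum_{\gamma\in M_t(\vF)} \epsilon_t^{\langle\gamma,\beta\rangle} = \prod_{j=1}^{r} \left( \sum_{\gamma_{i_j}\in \mbbZ/t\mbbZ} \epsilon_t^{\gamma_{i_j}\beta_{i_j}} \right).
\end{equation*}
Each inner factor is a geometric sum that equals $t$ when $\beta_{i_j} \equiv 0 \pmod t$ and $0$ otherwise; this is the one genuinely computational point, and it is routine. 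Hence the product equals $t^r = t^{|\vF|}$ precisely when every coordinate $\beta_{i_j}$ vanishes, i.e.\ when $\beta = 0$, equivalently $C^{\ab}_{[t]}|_\vF = \alpha$, and it equals $0$ as soon as any coordinate is nonzero. This matches the two cases in the statement.

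I do not anticipate a serious obstacle here: the result is pure character theory of a finite abelian group, and the only thing to be careful about is bookkeeping. The main point requiring attention is that $\langle\,\cdot\,,\cdot\,\rangle$ restricted to $M_t(\vF)$ is a well-defined nondegenerate bilinear pairing over $\mbbZ/t\mbbZ$ — this is where I must confirm that pairing only the coordinates indexed by $\vF$ (rather than all of $\vE_\mfrako$) is legitimate, which follows because $\alpha$ and the relevant restriction both live in $M_t(\vF)$ and the pairing is defined coordinatewise, so coordinates outside $\vF$ contribute nothing. With that observed, the factorization and geometric-series evaluation complete the proof.
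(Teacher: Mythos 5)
Your proposal is correct and follows essentially the same route as the paper's proof: both reduce the left-hand side to the single character sum $\sum_{\gamma\in M_t(\vF)}\epsilon_t^{\langle\gamma,\,C^{\ab}_{[t]}|_\vF-\alpha\rangle}$, factor it coordinatewise over $M_t(\vF)\simeq(\mbbZ/t\mbbZ)^{|\vF|}$, and evaluate each factor by the geometric-series identity (the paper phrases the vanishing via $\epsilon_t^{c_i}$ being a primitive $t/\gcd(t,c_i)$-th root of unity, which is the same computation). No gaps.
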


\begin{proof}
By definition, $\chi_\gamma(C) =\epsilon_t^{\langle\gamma, C^{\ab}_{[t]}\rangle}=\epsilon_t^{\langle\gamma, C^{\ab}_{[t]}|_\vF\rangle}$. Therefore, 
$$\sum_{\gamma\in M_t(\vF)}\epsilon_t^{-\langle\gamma, \alpha\rangle}\chi_\gamma(C)=\sum_{\gamma\in M_t(\vF)}\epsilon_t^{\langle\gamma, C^{\ab}_{[t]}|_\vF-\alpha\rangle}.$$ 

Clearly if $C^{\ab}_{[t]}|_\vF=\alpha$, then $\sum_{\gamma\in M_t(\vF)}\epsilon_t^{\langle\gamma, C^{\ab}_{[t]}|_\vF-\alpha\rangle}=|M_t(\vF)|=t^{|\vF|}$. 

Now suppose  $C^{\ab}_{[t]}|_\vF\neq\alpha$. Without loss of generalities, we may assume $\vF=\{\ve_1,\cdots, \ve_f\}$ where $f\leq m$. Let $\alpha=a_1\cdot \ve_1+\cdots + a_g\cdot \ve_g$ and $C^{\ab}_{[t]}|_\vF=b_1\cdot \ve_1+\cdots + b_f\cdot \ve_f$.  We may also assume there exists $1\leq k \leq f$ such that $a_i\neq b_i$ for $1\leq i\leq k$ and $a_i = b_i$ for $k+1\leq i\leq f$. This implies  $ C^{\ab}_{[t]}|_\vF-\alpha=c_1 \cdot \ve_1+\cdots +c_k\cdot \ve_k$ where $c_i=b_i-a_i\neq 0$ for $1\leq i\leq k$. 

For $\gamma = \gamma_1\cdot \ve_1+\cdots \gamma_f\cdot \ve_g\in M_t(\vF)$, we have 
$\langle\gamma, C^{\ab}_{[t]}|_\vF-\alpha\rangle = \sum_{i=1}^k \gamma_ic_i$. Note that $\sum_{\gamma_i=0}^{t-1}\epsilon_t^{\gamma_ic_i}=0$ for each $1\leq i \leq k$,  since $\epsilon_t^{c_i}$ is a primitive $t/\gcd(t,c_i)$-th root of unity. This means
\begin{align*}
\sum_{\gamma\in M_t(\vF)}\epsilon_t^{\langle\gamma, C^{\ab}_{[t]}|_\vF-\alpha\rangle}&=\sum_{\gamma\in M_t(\vF)}\epsilon_t^{\sum_{i=1}^k \gamma_ic_i} \\
&=t^{f-k}\left(\sum_{\gamma_1=0}^{t-1}\epsilon_t^{\gamma_1c_1}\right)\cdots \left(\sum_{\gamma_k=0}^{t-1}\epsilon_t^{\gamma_kc_k}\right)\\
&=0, 
\end{align*}
and the claim is proved. 
\end{proof}

Now we are ready to prove Theorem~\ref{T:counting0} and Theorem~\ref{T:counting} . 

\begin{proof}[Proof of Theorem~\ref{T:counting0}]
Then by Lemma~\ref{L:trace}(a) and Lemma~\ref{L:orth}, we have
\begin{align*}
& \frac{1}{t^{|\vF|}}\sum_{\gamma\in M_t(\vF)}\epsilon_t^{-\langle\gamma, \alpha\rangle}\trace\left(W_{1,\gamma}^l\right) \\
=& \frac{1}{t^{|\vF|}}\sum_{\gamma\in M_t(\vF)}\epsilon_t^{-\langle\gamma, \alpha\rangle}\sum_{C\in \mcalC_l}\chi_\gamma(C) \\
=& \frac{1}{t^{|\vF|}}\sum_{C\in \mcalC_l} \sum_{\gamma\in M_t(\vF)}\epsilon_t^{-\langle\gamma, \alpha\rangle}\chi_\gamma(C)  \\
=& \frac{1}{t^{|\vF|}}\sum_{C\in \mcalC_l, C^{\ab}_{[t]}|_{\vF}=\alpha}   t^{|\vF|} = N_t(\alpha,l).
\end{align*}

Analogously, by Lemma~\ref{L:trace}(b), we have
\begin{align*}
 & \frac{1}{t^{|\vF|}}\sum_{\gamma\in M_t(\vF)}\epsilon_t^{-\langle\gamma, \alpha\rangle}\trace\left(A_\gamma^l\right) \\
=& \frac{1}{t^{|\vF|}}\sum_{\gamma\in M_t(\vF)}\epsilon_t^{-\langle\gamma, \alpha\rangle}\sum_{C\in \widetilde{\mcalC}_l}\chi_\gamma(C) \\
=& \frac{1}{t^{|\vF|}}\sum_{C\in \widetilde{\mcalC}_l} \sum_{\gamma\in M_t(\vF)}\epsilon_t^{-\langle\gamma, \alpha\rangle}\chi_\gamma(C)  \\
=&  \frac{1}{t^{|\vF|}}\sum_{C\in \widetilde{\mcalC}_l, C^{\ab}_{[t]}|_{\vF}=\alpha}  t^{|\vF|} = \widetilde{N}_t(\alpha,l).
\end{align*}

\end{proof}

\begin{proof}[Proof of Theorem~\ref{T:counting}]
As shown in Lemma~\ref{L:rhoT}, $H_1(G,\mbbZ/t\mbbZ)$ is isomorphic to $M_t(\vE_\mfrako^c(T))$ by the restriction map $\alpha \mapsto \alpha|_{\vE_\mfrako^c(T)}$. This means that for all  $\gamma\in M_t(\vE_\mfrako^c(T))$ and $\alpha \in H_1(G,\mbbZ)$, we have $ \langle\gamma, \alpha\rangle = \langle\gamma, \alpha|_{\vE_\mfrako^c(T)}\rangle$, 
$N_t(\alpha,l)  = N_{t,\vE_\mfrako^c(T)}(\alpha|_{\vE_\mfrako^c(T)},l) $ and $ \widetilde{N}_t(\alpha,l)  =  \widetilde{N}_{t,\vE_\mfrako^c(T)}(\alpha|_{\vE_\mfrako^c(T)},l)$. 
Then Theorem~\ref{T:counting} is implied by Theorem~\ref{T:counting0}. 
\end{proof}

\section{Proof of the main theorem and its directed version}

By Remark~\ref{R:caset2}, when considering $C_1(G,\mbbZ/2\mbbZ)$, the orientation $\mfrako$ does not matter, and we write $M_2(E^c(T))=M_2(\vE_\mfrako^c(T))$ for a spanning tree $T$ of $G$. Now let $\mathbbm{1}= 1\cdot e_1+\cdots+1\cdot e_m \in C_1(G,\mbbZ/2\mbbZ)$ and $\mathbbm{1}_T= 1\cdot e_1+\cdots+1\cdot e_g \in M_2(E^c(T))$ where $E^c(T)=\{e_1,\cdots,e_g\}$. As shown in Lemma~\ref{L:rhoT},  let $\rho_T:H_1(G,\mbbZ/2\mbbZ) \xrightarrow{\sim} M_2(E^c(T)) $ be the restriction map which is an isomorphism. 

\begin{proposition} \label{P:eulerian}
The following statements are equivalent:
\begin{enumerate}[(a)]
\item  $G$ is an Eulerian graph;
\item The degree of every vertex of $G$ is even;
\item $\mathbbm{1}\in H_1(G,\mbbZ/2\mbbZ)$;
\item $\rho_T^{-1}(\mathbbm{1}_T)=\mathbbm{1}$ for all spanning trees $T$ of $G$. 
\item $G$ is bridgeless (i.e., $G\setminus e$ is still connected for all $e\in E$) and $\rho_T^{-1}(\mathbbm{1}_T)$ is identical for all spanning trees $T$ of $G$. 
\end{enumerate}
\end{proposition}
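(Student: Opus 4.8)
The plan is to prove the chain of biconditionals
$$(a)\Leftrightarrow(b)\Leftrightarrow(c)\Leftrightarrow(d)\Leftrightarrow(e),$$
treating $(a)\Leftrightarrow(b)$ as Euler's Theorem (already quoted in the introduction) and deriving the remaining three equivalences homologically. The two engines are the isomorphism $\rho_T$ of Lemma~\ref{L:rhoT} and the orientation-free description of $C_1(G,\mbbZ/2\mbbZ)$ from Remark~\ref{R:caset2}, under which the balancing condition at a vertex $v$ simply asks that the relevant sum of incident-edge coefficients vanish modulo $2$.

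For $(b)\Leftrightarrow(c)$ I would evaluate the balancing condition on the specific chain $\mathbbm{1}$: since all its coefficients equal $1$, the condition at $v$ becomes the assertion that $\deg(v)$ is even (each non-loop edge at $v$ contributing a single $1$ to the relevant sum modulo $2$, each loop contributing $0$). Hence $\mathbbm{1}\in H_1(G,\mbbZ/2\mbbZ)$ if and only if every vertex has even degree, which is $(b)\Leftrightarrow(c)$. For $(c)\Leftrightarrow(d)$, the key remark is that the restriction of $\mathbbm{1}$ to the non-tree edges is literally $\mathbbm{1}_T$, because all of its coefficients are $1$; so once $\mathbbm{1}\in H_1(G,\mbbZ/2\mbbZ)$ we get $\rho_T(\mathbbm{1})=\mathbbm{1}_T$ and therefore $\rho_T^{-1}(\mathbbm{1}_T)=\mathbbm{1}$ for every $T$, while conversely $(d)$ exhibits $\mathbbm{1}=\rho_T^{-1}(\mathbbm{1}_T)$ as an element of $H_1(G,\mbbZ/2\mbbZ)$ since $\rho_T^{-1}$ lands there.

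The substance lies in $(d)\Leftrightarrow(e)$. For $(d)\Rightarrow(e)$ the value $\rho_T^{-1}(\mathbbm{1}_T)=\mathbbm{1}$ is visibly independent of $T$, and bridgelessness follows from the standard fact that a bridge lies in no cycle and hence carries coefficient $0$ in every element of $H_1(G,\mbbZ/2\mbbZ)$ (via cycle--cut duality: any homology class meets an edge cut in an even number of edges, and a bridge is a cut of size one); were a bridge present, $\mathbbm{1}$—which has coefficient $1$ there—could not lie in $H_1$, contradicting $(c)$, which $(d)$ supplies. The interesting direction is $(e)\Rightarrow(d)$. Write $\beta$ for the common value $\rho_T^{-1}(\mathbbm{1}_T)\in H_1(G,\mbbZ/2\mbbZ)$. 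I would then use bridgelessness to let each edge serve as a non-tree edge: for any $e\in E(G)$, connectedness of $G\setminus e$ yields a spanning tree $T_e$ of $G$ with $e\in\vE_\mfrako^c(T_e)$, so $\beta|_{\vE_\mfrako^c(T_e)}=\mathbbm{1}_{T_e}$ has coefficient $1$ at $e$, forcing the coefficient of $e$ in $\beta$ to be $1$. As $e$ is arbitrary, $\beta=\mathbbm{1}$, which is $(d)$.

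The main obstacle is precisely $(e)\Rightarrow(d)$, and the subtle point to get right is why \emph{both} hypotheses of $(e)$ are indispensable: the clause ``$\rho_T^{-1}(\mathbbm{1}_T)$ is identical for all $T$'' is the homological surrogate for the even-degree condition, whereas bridgelessness is exactly what allows every edge to appear outside some spanning tree, so that the single class $\beta$ can be pinned to have full support. I expect the only auxiliary fact needing genuine care is the bridge-coefficient lemma used in both directions of $(d)\Leftrightarrow(e)$; everything else reduces to unwinding $\rho_T$ and the definition of $\mathbbm{1}$.
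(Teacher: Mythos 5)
Your proposal is correct and follows essentially the same route as the paper's own proof: $(a)\Leftrightarrow(b)$ via Euler's Theorem, $(b)\Leftrightarrow(c)$ by reading the mod-$2$ balancing condition on $\mathbbm{1}$ as the even-degree condition, $(c)\Leftrightarrow(d)$ through the isomorphism $\rho_T$, and $(e)\Rightarrow(d)$ by using bridgelessness to place each edge outside some spanning tree $T_e$ and thereby pin the coefficient of the common class at every edge to $1$ --- exactly the paper's argument. The only cosmetic difference is in $(d)\Rightarrow(e)$: the paper simply invokes the classical fact that Eulerian graphs are bridgeless, whereas you derive bridgelessness from the (correct) observation that a bridge carries coefficient $0$ in every homology class, an equally valid and slightly more self-contained homological justification.
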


\begin{proof}
The equivalence of (a) and (b) is actually the statement of the Euler's Theorem. 

Now suppose for each vertex $v\in V$, the degree of $v$ is even. Equivalently, this means $\sum_{e\in E\text{ and } e \text{ is adjacent to } v} 1 = 0 \mod 2$ for all $v\in V$, i.e., the balancing condition for $\mathbbm{1}$ to be a $\mbbZ/2\mbbZ$-circulation is satisfied. Hence we have the equivalence of (b) and (c). 

By Lemma~\ref{L:rhoT}, the restriction map $\rho_T$ is an isomorphism. Then if $\mathbbm{1}\in H_1(G,\mbbZ/2\mbbZ)$, then $\rho_T(\mathbbm{1})=\mathbbm{1}_T$ and hence $\rho_T^{-1}(\mathbbm{1}_T)=\mathbbm{1}$. This means that (c) implies (d), and the converse is obviously true. 

Since Eulerian graphs are clearly bridgeless, it is straightforward to see that (d) implies (e). Now suppose (e) is satisfied. Let $\alpha$ be the identical inverse image of $\mathbbm{1}_T$ for all spanning trees of $G$. Note that an edge $e\in E$ is a bridge if and only if $e$ appears in all spanning trees of $G$. Since $G$ is bridgeless, each edge $e$ must appear in some spanning trees  $T$ of $G$. By the nature of the restriction map $\rho_T$, this means that the coefficient of $e$ in $\alpha$ must be $1$. As a result, we see $\alpha = \mathbbm{1}$ and (d) holds. 

\end{proof}

\begin{lemma} \label{L:eulerian}
Let $G$ be a graph with $m$ edges. Then $\ec(G)=N_2(\mathbbm{1},m)/m = \widetilde{N}_2(\mathbbm{1},m)/m$
\end{lemma}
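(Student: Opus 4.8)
The plan is to show that both $N_2(\mathbbm{1},m)$ and $\widetilde{N}_2(\mathbbm{1},m)$ count exactly the Eulerian circuits of $G$, and then that each Eulerian cycle accounts for precisely $m$ such circuits, so that dividing by $m$ gives $\ec(G)$.

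First I would pin down which closed walks are being counted. By Definition~\ref{D:count-function}(ii), $\widetilde{N}_2(\mathbbm{1},m)$ is the number of closed walks $C$ of length $m$ with $C^{\ab}_{[2]}=\mathbbm{1}$. Writing $C^{\ab}=\sum_{i=1}^m n_i\cdot e_i$, where $n_i$ is the total number of times $C$ traverses the underlying edge $e_i$ (in either orientation), the condition $C^{\ab}_{[2]}=\mathbbm{1}$ says every $n_i$ is odd, hence $n_i\geq 1$. Since $C$ has length $m$ we have $\sum_{i=1}^m n_i=m$; a sum of $m$ terms, each at least $1$, totalling $m$ forces every $n_i=1$. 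Thus $C$ traverses each edge of $G$ exactly once, i.e.\ $C$ is an Eulerian circuit, and conversely any Eulerian circuit has length $m$, traverses each edge once, and so satisfies $C^{\ab}_{[2]}=\mathbbm{1}$.

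Next I would observe that an Eulerian circuit automatically has neither a backtrack nor a tail: a backtrack $\va_{i+1}=\va_i^{-1}$ or a tail $\va_m=\va_1^{-1}$ would force the same underlying edge to be used twice, contradicting $n_i=1$. Hence the Eulerian circuits lie in $\mcalC_m$ just as they lie in $\widetilde{\mcalC}_m$, so the \emph{same} set of walks is counted in both cases, yielding $N_2(\mathbbm{1},m)=\widetilde{N}_2(\mathbbm{1},m)$ with common value the number of Eulerian circuits. The final step passes from circuits to cycles through the translation action. Recalling that a cycle is an orbit of circuits under cyclic rotation, I would note that for an Eulerian circuit $\va_1\cdots\va_m$ all $m$ underlying edges are distinct, so the sequence is aperiodic and its $m$ cyclic rotations are pairwise distinct Eulerian circuits (each rotation being again a closed walk based at the appropriate intermediate vertex). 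Hence every Eulerian cycle is an orbit of exactly $m$ Eulerian circuits, giving (number of Eulerian circuits) $=m\cdot\ec(G)$; combining with the previous identity gives $\ec(G)=N_2(\mathbbm{1},m)/m=\widetilde{N}_2(\mathbbm{1},m)/m$.

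All the steps are elementary, and the only place calling for genuine care is the orbit-size claim: one must verify that the $m$ rotations of an Eulerian circuit really are distinct. This rests on the aperiodicity coming from distinctness of the edges, and I would make sure it remains valid in the degenerate situations — when $G$ has loops, or when $m$ is small (for instance $G$ a single loop, where $m=1$ and the orbit is a singleton) — since these are exactly the cases where one might worry that a rotation coincides with the original or that the reversal/orientation of a loop is mishandled.
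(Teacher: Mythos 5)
Your proof is correct and takes essentially the same route as the paper, whose one-line proof simply cites the fact you verify in detail: a closed walk $C$ of length $m$ satisfies $C^{\ab}_{[2]}=\mathbbm{1}$ if and only if it is an Eulerian circuit (your parity/pigeonhole argument, which also yields the absence of backtracks and tails and hence $N_2(\mathbbm{1},m)=\widetilde{N}_2(\mathbbm{1},m)$). The orbit-size claim that each Eulerian cycle consists of exactly $m$ distinct circuits under translation, which you justify via distinctness of the underlying edges, is left implicit in the paper, so your write-up is a faithful expansion rather than a different argument.
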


\begin{proof}
It follows from the fact that a closed walk $C$ on $G$ is an Eulerian circuit if and only if $\mathbbm{1}=C^{\ab}_{[2]}\in H_1(G,\mbbZ/2\mbbZ)$ and the length of $C$ is $m$.
\end{proof}

Now let us prove the main theorem.

\begin{proof}[Proof of Theorem~\ref{T:main}]
The above lemma gives $\ec(G)=N_2(\mathbbm{1},m)/m = \widetilde{N}_2(\mathbbm{1},m)/m$. Since $G$ is Eulerian by assumption,  $\mathbbm{1}=\rho_T^{-1}(\mathbbm{1}_T)$ for all spanning trees $T$ of $G$ by Proposition~\ref{P:eulerian}. 

Using the formula in Theorem~\ref{T:counting}, we have
\begin{align*}
N_2(\mathbbm{1},m) = N_2(\rho_T^{-1}(\mathbbm{1}_T),m) &= \frac{1}{2^g}\sum_{\gamma\in M_2(E^c(T))}(-1)^{-\langle\gamma, \mathbbm{1}\rangle}\trace\left(W_{1,\gamma}^m\right) \\
&= \frac{1}{2^g}\sum_{\gamma\in M_2(E^c(T))}(-1)^{-\langle\gamma, \mathbbm{1}_T\rangle}\trace\left(W_{1,\gamma}^m\right) \\
&=\frac{1}{2^g}\sum_{\gamma\in M_2(E^c(T))}(-1)^{\sigma(\gamma)}\trace\left(W_{1,\gamma}^m\right),
\end{align*} 
and 
\begin{align*}
\widetilde{N}_2(\mathbbm{1},m)= \widetilde{N}_2(\rho_T^{-1}(\mathbbm{1}_T),m)  &= \frac{1}{2^g}\sum_{\gamma\in M_2(E^c(T))}(-1)^{-\langle\gamma, \mathbbm{1}\rangle}\trace\left(A_\gamma^m\right) \\
&= \frac{1}{2^g}\sum_{\gamma\in M_2(E^c(T))}(-1)^{-\langle\gamma, \mathbbm{1}_T\rangle}\trace\left(A_\gamma^m\right) \\
&=\frac{1}{2^g}\sum_{\gamma\in M_2(E^c(T))}(-1)^{\sigma(\gamma)}\trace\left(A_\gamma^m\right).
\end{align*}

\end{proof}

\begin{remark} \label{R:nonEulerian}
If $G$ is not Eulerian, then Theorem~\ref{T:main} is not true in general. Actually, in this case, we have $\ec(G)=N_2(\mathbbm{1},m)=\widetilde{N}_2(\mathbbm{1},m)=0$. But for each spanning tree $T$, we may still compute $N_2(\rho_T^{-1}(\mathbbm{1}_T),m) $ and $\widetilde{N}_2(\rho_T^{-1}(\mathbbm{1}_T),m) $ which are not necessarily zero. The reason is that $\mathbbm{1}\notin H_1(G,\mbbZ/2\mbbZ)$ (Proposition~\ref{P:eulerian}) and the identity $\rho_T^{-1}(\mathbbm{1}_T)=\mathbbm{1}$ does not hold anymore. See Example~\ref{E:non-eulerian} for an example in such a case. 
\end{remark}

Now let $G_\mfrako$ be an Eulerian digraph, which means $\mfrako$ is an Eulerian orientation on the Eulerian graph $G$. We can also use Theorem~\ref{T:counting} to count the number $\ec(G,\mfrako)$ of Eulerian cycles on $G_\mfrako$.  In particular, for $t\geq 3$, each Eulerian orientation $\mfrako$ induces uniquely a circulation in $H_1(G,\mbbZ/t\mbbZ)$ in a natural way. Therefore, we may prove analogously the following theorem on an explicit formula for $\ec(G,\mfrako)$, which can be considered as a directed version of Theorem~\ref{T:main}.

\begin{theorem} \label{T:directed_eulerian}
For an Eulerian graph $G$ of genus $g$ with $m$ edges, let $T$ be a spanning tree of $G$ and $\mfrako$ an Eulerian orientation of $G$.  For each $t\geq 3$, we have following trace formula for the number of Eulerian cycles on $G_\mfrako$:
\begin{align*}
\ec(G,\mfrako) &= \frac{1}{m\cdot t^g}\sum_{\gamma\in M_t(\vE_\mfrako^c(T))}\epsilon_t^{-\sigma(\gamma)}\trace\left(W_{1,\gamma}^m\right) \\
&= \frac{1}{m\cdot t^g}\sum_{\gamma\in M_t(\vE_\mfrako^c(T))}\epsilon_t^{-\sigma(\gamma)}\trace\left(A_\gamma^m\right),
\end{align*}
where $\sigma(\gamma)=\sum_{\ve\in \vE_\mfrako^c(T)} c_\ve$ for $\gamma = \sum_{\ve\in \vE_\mfrako^c(T)}c_\ve \cdot \ve \in M_t(\vE_\mfrako^c(T) )$. 
\end{theorem}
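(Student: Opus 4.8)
The plan is to mirror the proof of Theorem~\ref{T:main}, replacing the all-ones class $\mathbbm{1}\in H_1(G,\mbbZ/2\mbbZ)$ by the circulation naturally induced by the Eulerian orientation $\mfrako$. Concretely, let $\mathbbm{1}_\mfrako := \sum_{\ve\in\vE_\mfrako(G)} 1\cdot \ve \in C_1(G,\mbbZ/t\mbbZ)$, which lifts to the integral chain with every coefficient equal to $1$. Since $\mfrako$ is an Eulerian orientation, the out-degree equals the in-degree at every vertex, so the balancing condition holds and $\mathbbm{1}_\mfrako\in H_1(G,\mbbZ/t\mbbZ)$.

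The key step -- and the place where the hypothesis $t\geq 3$ is essential -- is the directed analog of Lemma~\ref{L:eulerian}, namely
$$\ec(G,\mfrako)=N_t(\mathbbm{1}_\mfrako,m)/m=\widetilde{N}_t(\mathbbm{1}_\mfrako,m)/m.$$
To prove it, I would show that a closed walk $C$ of length $m$ with $C^{\ab}_{[t]}=\mathbbm{1}_\mfrako$ is exactly an Eulerian circuit on $G_\mfrako$. Writing $C^{\ab}=\sum_{\ve\in\vE_\mfrako(G)}a_\ve\cdot\ve$ and letting $p_\ve,q_\ve\geq 0$ count the traversals of $\ve$ in the positive and negative directions, we have $a_\ve=p_\ve-q_\ve$ and $\sum_\ve(p_\ve+q_\ve)=m$. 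The condition $a_\ve\equiv 1\pmod t$ forces $a_\ve\neq 0$, hence $p_\ve+q_\ve\geq 1$ for each of the $m$ edges; since the total traversal count is $m$, every edge must satisfy $p_\ve+q_\ve=1$, so $a_\ve\in\{+1,-1\}$. For $t\geq 3$ we have $-1\not\equiv 1\pmod t$, forcing $a_\ve=1$ for all $\ve$, i.e. $C$ traverses each positively-oriented edge exactly once. Such a walk has no backtrack or tail, so it is an Eulerian circuit on $G_\mfrako$, and this simultaneously gives $N_t(\mathbbm{1}_\mfrako,m)=\widetilde{N}_t(\mathbbm{1}_\mfrako,m)$. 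The factor $1/m$ accounts for the $m$ distinct cyclic translates of each Eulerian cycle, which are distinct precisely because the $m$ traversed edges are pairwise distinct.

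With this lemma in hand, the remainder is a direct application of Theorem~\ref{T:counting} with $\alpha=\mathbbm{1}_\mfrako$. As in the proof of that theorem, for $\gamma\in M_t(\vE_\mfrako^c(T))$ one has $\langle\gamma,\mathbbm{1}_\mfrako\rangle=\langle\gamma,\mathbbm{1}_\mfrako|_{\vE_\mfrako^c(T)}\rangle=\sum_{\ve\in\vE_\mfrako^c(T)}c_\ve=\sigma(\gamma)$, since the restriction of $\mathbbm{1}_\mfrako$ to the cotree edges is the all-ones vector. Substituting $\epsilon_t^{-\langle\gamma,\mathbbm{1}_\mfrako\rangle}=\epsilon_t^{-\sigma(\gamma)}$ into the two formulas of Theorem~\ref{T:counting} and dividing by $m$ yields the two claimed trace formulas for $\ec(G,\mfrako)$.

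The main obstacle is the first half of the key lemma: pinning down that for $t\geq 3$ the length-$m$ closed walks in the class $\mathbbm{1}_\mfrako$ are precisely the Eulerian circuits compatible with $\mfrako$, with no cancellation (an edge traversed in both directions) or reversal possible. This is exactly the feature that fails at $t=2$, where $+1=-1$ erases the orientation and one instead recovers the undirected count of Theorem~\ref{T:main}.
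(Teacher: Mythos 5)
Your proposal is correct and takes essentially the same route as the paper's proof: both identify $\ec(G,\mfrako)$ with $N_t(\mathbbm{1}_\mfrako,m)/m=\widetilde{N}_t(\mathbbm{1}_\mfrako,m)/m$, where $\mathbbm{1}_\mfrako$ lies in $H_1(G,\mbbZ/t\mbbZ)$ and restricts to $\mathbbm{1}_{\mfrako,T}$ under $\rho_T$, and then substitute into Theorem~\ref{T:counting}. The only difference is that you make explicit the traversal-counting argument (that $C^{\ab}_{[t]}=\mathbbm{1}_\mfrako$ and length $m$ force $p_\ve+q_\ve=1$ and then $a_\ve=1$ since $-1\not\equiv 1 \pmod t$ for $t\geq 3$), a step the paper asserts without detailed justification.
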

\begin{proof}
Let $\vE_\mfrako^c(T)=\{\ve_1,\cdots,\ve_g\}$ and $\vE_\mfrako(T)=\{\ve_{g+1},\cdots,\ve_m\}$. Let $\mathbbm{1}_\mfrako:= 1\cdot \ve_1+\cdots+1\cdot \ve_m \in C_1(G,\mbbZ/t\mbbZ)$ and $\mathbbm{1}_{\mfrako,T}:= 1\cdot \ve_1+\cdots+1\cdot \ve_g \in M_t(\vE_\mfrako^c(T))$. 

Then a closed walk $C$ on $G$ is an Eulerian circuit compatible with the orientation $\mfrako$ (which means $\mfrako$ must be an Eulerian orientation) if and only $\mathbbm{1}_\mfrako=C^{\ab}_{[t]}\in H_1(G,\mbbZ/t\mbbZ)$ and the length of $C$ is $m$. Hence, we must have $\ec(G,\mfrako)=N_t(\mathbbm{1}_\mfrako,m) /m= \widetilde{N}_t(\mathbbm{1}_\mfrako,m)/m$.  Let $\rho_T:H_1(G,\mbbZ/t\mbbZ) \xrightarrow{\sim} M_t(\vE_\mfrako^c(T)) $ be the restriction map which is an isomorphism (Lemma~\ref{L:rhoT}). If $\mfrako$ is Eulerian, then $\mathbbm{1}_\mfrako=\rho_T^{-1}(\mathbbm{1}_{\mfrako,T})\in H_1(G,\mbbZ/t\mbbZ)$, and we can complete the proof using the formulas in Theorem~\ref{T:counting} as in the proof of Theorem~\ref{T:main}. 
\end{proof}

\section{Examples}
We will show two examples  in this section, one for a non-Eulerian graph and one for an Eulerian graph. \footnote{Sagemath codes of the computations are available at \url{https://github.com/nittup/eulerian-count}.}

\begin{figure}
 \centering
\begin{tikzpicture}[>=to,x=2cm,y=2cm]

\tikzset{
        midarrow/.style={
            postaction={decorate},
            decoration={
                markings,
                mark=at position 0.5 with {\arrow[line width=1.5pt]{stealth'}}
            }
        }
    }

\begin{scope}[shift={(0,0)}] 
\draw (-1, 1) node {(a)};

\coordinate (v0) at (0,0);
\coordinate (v1) at ($1/3*(-1.732,-1)$);
\coordinate (v2) at ($1/3*(1.732,-1)$);
\coordinate (v3) at ($1/3*(0,1.732)$);
\coordinate (v4) at ($2/3*(-1.732,-1)$);
\coordinate (v5) at ($2/3*(1.732,-1)$);
\coordinate (v6) at ($2/3*(0,1.732)$);

\path[-,font=\scriptsize,  line width=1.5pt, black]
(v0) edge[midarrow,color=gray] node[pos=0.6,anchor=south,font=\small, color=blue]{$e_0$} (v1)
(v0) edge[midarrow,color=gray] node[pos=0.6,anchor=south,font=\small, color=blue]{$e_1$}  (v2)
(v0) edge[midarrow,color=gray] node[pos=0.5,anchor=east,font=\small, color=blue]{$e_2$}  (v3)
(v1) edge[midarrow,color=gray] node[pos=0.4,anchor=south,font=\small, color=blue]{$e_3$} (v4)
(v2) edge[midarrow,color=gray] node[pos=0.4,anchor=south,font=\small, color=blue]{$e_4$}  (v5)
(v3) edge[midarrow,color=gray] node[pos=0.4,anchor=east,font=\small, color=blue]{$e_5$}  (v6)
(v4) edge[midarrow,out=-15,in=-165] node[pos=0.5,anchor=north,font=\small, color=blue]{$e_6$}  (v5)
(v5) edge[midarrow,out=105,in=-45] node[pos=0.5,anchor=west,font=\small, color=blue]{$e_8$}  (v6)
(v6) edge[midarrow,out=-140,in=80] node[pos=0.5,anchor=east,font=\small, color=blue]{$e_7$}  (v4);

\fill [black] (v0) circle (2.5pt);
\draw (v0) node[anchor=north] {$v_0$};
\fill [black] (v1) circle (2.5pt);
\draw (v1) node[anchor=north] {$v_1$};
\fill [black] (v2) circle (2.5pt);
\draw (v2) node[anchor=north] {$v_2$};
\fill [black] (v3) circle (2.5pt);
\draw (v3) node[anchor=west] {$v_3$};
\fill [black] (v4) circle (2.5pt);
\draw (v4) node[anchor=east] {$v_4$};
\fill [black] (v5) circle (2.5pt);
\draw (v5) node[anchor=west] {$v_5$};
\fill [black] (v6) circle (2.5pt);
\draw (v6) node[anchor=south] {$v_6$};

\end{scope}

\begin{scope}[shift={(3.5,0)}] 
\draw (-1, 1) node {(b)};

\coordinate (v0) at (0,0);
\coordinate (v1) at ($2/3*(-1.732,-1)$);
\coordinate (v2) at ($2/3*(1.732,-1)$);
\coordinate (v3) at ($2/3*(0,1.732)$);

\path[-,font=\scriptsize,  line width=1.5pt, black]
(v1) edge[midarrow,out=-15,in=-165] node[pos=0.5,anchor=north,font=\small, color=blue]{$e_4$}  (v2)
(v3) edge[midarrow,out=-140,in=80] node[pos=0.5,anchor=east,font=\small, color=blue]{$e_5$} (v1)
(v3) edge[midarrow] node[pos=0.6,anchor=west,font=\small, color=blue]{$e_6$} (v1)
(v1) edge[midarrow,color=gray] node[pos=0.6,anchor=north,font=\small, color=blue]{$e_0$} (v0)
(v2) edge[midarrow,out=105,in=-45] node[pos=0.5,anchor=west,font=\small, color=blue]{$e_7$}  (v3)
(v0) edge[midarrow,out=-10,in=135] node[pos=0.5,anchor=south,font=\small, color=blue]{$e_2$} (v2)
(v2) edge[midarrow,color=gray,out=165,in=-50] node[pos=0.4,anchor=east,font=\small, color=blue]{$e_1$} (v0)
(v0) edge[midarrow,color=gray] node[pos=0.4,anchor=west,font=\small, color=blue]{$e_3$} (v3);

\fill [black] (v0) circle (2.5pt);
\draw (v0) node[anchor=south east] {$v_0$};
\fill [black] (v1) circle (2.5pt);
\draw (v1) node[anchor=east] {$v_1$};
\fill [black] (v2) circle (2.5pt);
\draw (v2) node[anchor=west] {$v_2$};
\fill [black] (v3) circle (2.5pt);
\draw (v3) node[anchor=south] {$v_3$};
\end{scope}

\end{tikzpicture}
 \caption{(a) A non-Eulerian graph $G_1$; (b) An Eulerian graph $G_2$}
 \label{F:example}
\end{figure}
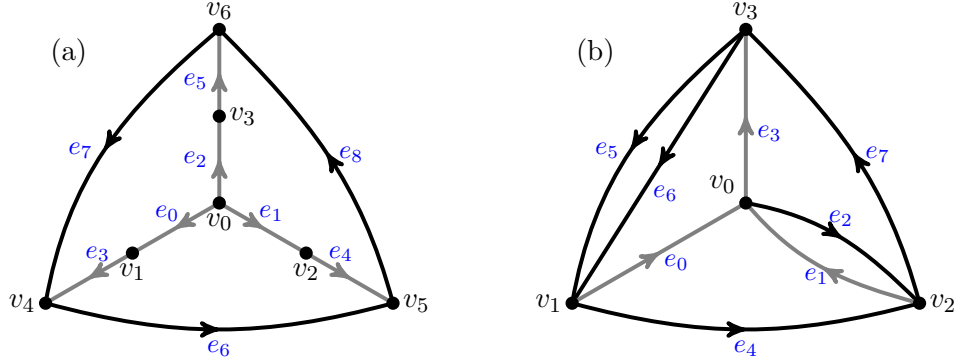

\begin{example}[The non-Eulerian case] \label{E:non-eulerian}
Let us first  consider $G_1$ with the vertex set $V(G_1)=\{v_0,\cdots,v_6\}$, the edge set $E(G_1)=\{e_0,\cdots,e_8\}$ \footnote{For convenience of comparison with codes in Sagemath, the indexing of vertices and edges begins at $0$ for the examples in this section.} and an orientation $\mfrako$ as shown in Figure~\ref{F:example}(a). Let $\vE_\mfrako(G_1)=\{\ve_0,\cdots,\ve_8\}$ be the set of positively oriented edges with respect to $\mfrako$. Clearly $G_1$ is not an Eulerian graph, i.e., $\ec(G_1)=0$, since none of the vertices have even degrees. Let $\mathbbm{1}= 1\cdot e_0+\cdots+1\cdot e_8 \in C_1(G_1,\mbbZ/2\mbbZ)=M_2(E(G_1))$. 

Let $T$ be a spanning tree of $G_1$ with edges $e_0,\cdots,e_5$. Let  $\mathbbm{1}_T= 1\cdot e_6+1\cdot e_7+1\cdot e_8 \in M_2(E^c(T))$. Then we may still compute the right hand side of the formula in Theorem~\ref{T:main} as 
$N_2(\rho_T^{-1}(\mathbbm{1}_T),9)/9=N_{2,E^c(T)}(\mathbbm{1}_T,9)/9$ and $\widetilde{N}_2(\rho_T^{-1}(\mathbbm{1}_T),9)/9=\widetilde{N}_{2,E^c(T)}(\mathbbm{1}_T,9)/9$. By computation, we derive $N_2(\rho_T^{-1}(\mathbbm{1}_T),9)= 6$ and $\widetilde{N}_2(\rho_T^{-1}(\mathbbm{1}_T),9)=3282$, both being nonzero. The reason is that in this example $\rho_T^{-1}(\mathbbm{1}_T)=\mathbbm{1}_T\neq\mathbbm{1}$ (Remark~\ref{R:nonEulerian}). Actually the 6 circuits counted by $N_2(\rho_T^{-1}(\mathbbm{1}_T),9)$ are the two opposite circuits, $\ve_6\ve_8\ve_7\ve_6\ve_8\ve_7\ve_6\ve_8\ve_7$ and $\ve_6^{-1}\ve_7^{-1}\ve_8^{-1}\ve_6^{-1}\ve_7^{-1}\ve_8^{-1}\ve_6^{-1}\ve_7^{-1}\ve_8^{-1}$, and their other 4 translations. Note that there are a lot more closed walks counted by $\widetilde{N}_2(\rho_T^{-1}(\mathbbm{1}_T),9)$ since for closed walks, backtracks are allowed. For instance, $\ve_6\ve_6^{-1}\ve_6\ve_7\ve_7^{-1}\ve_7\ve_8\ve_8^{-1}\ve_8$ is a closed walk counted by $\widetilde{N}_2(\rho_T^{-1}(\mathbbm{1}_T),9)$, but not a circuit counted by $N_2(\rho_T^{-1}(\mathbbm{1}_T),9)$. 

On the other hand, by Lemma~\ref{L:eulerian}, we may still use the formula in Theorem~\ref{T:counting0} to precisely count the number of Eulerian circuits with an expected value of $0$, by simply letting $F=E(G_1)$. As can be confirmed by computation, we get $\ec(G_1)=N_{2,E(G_1)}(\mathbbm{1},9)/9=\widetilde{N}_{2,E(G_1)}(\mathbbm{1},9)/9=0$. 
\end{example}

\begin{example}[The Eulerian case] \label{E:eulerian}
Now we will  consider an Eulerian graph $G_2$ with the vertex set $V(G_2)=\{v_0,\cdots,v_3\}$, the edge set $E(G_2)=\{e_0,\cdots,e_\}$ and an Eulerian orientation $\mfrako$ as shown in Figure~\ref{F:example}(b).  Let $\vE_\mfrako(G_2)=\{\ve_0,\cdots,\ve_7\}$ be the set of positively oriented edges with respect to $\mfrako$. Let $T$ be a spanning tree of $G_2$ with edges $e_0,e_1,e_3$. 

 The out-degree of $G_2$ with respect to $\mfrako$ at all vertices is $2$. Hence by the BEST theorem (Theorem~\ref{T:best}), $\ec(G_2,\mfrako)=\In_w(G_2,\mfrako)$ where $\In_w(G,\mfrako)$ is the number of arborescences rooted at an arbitrary vertex $w\in V(G_2)$, which equals the determinant of the reduced Laplacian of $G_2$ under orientation $\mfrako$.  In particular, the Laplacian $L_\mfrako$ of $G_2$ under orientation $\mfrako$ is defined as $(L_\mfrako)_{ii}=\outdeg_\mfrako(v_i)$ and  $(L_\mfrako)_{ij}$ is the negation of the number of positively oriented edges with respect to $\mfrako$ with initial vertex $v_i$ and terminal vertex $v_j$. Removing the last row and column of  $L_\mfrako$, we get the reduced Laplacian $L_\mfrako^{\red(v_3)}$. As a consequence of  the directed version of Kirchhoff's matrix-tree theorem, $\In_w(G_2,\mfrako)=\det 
\left(L_\mfrako^{\red(v_3)}\right)$. Specifically, we can compute
 $$\ec(G_2,\mfrako)=\In_w(G_2,\mfrako)=\det \left(L_\mfrako^{\red(v_3)}\right)=\begin{vmatrix}
2 & 0 &-1  \\
-1 & 2& -1 \\
-1 & 0 &  2
\end{vmatrix}=6.$$

On the other hand, computations based on the trace formula in Theorem~\ref{T:directed_eulerian} will provide the same result for $\ec(G_2,\mfrako)$. That is, for any $t\geq 3$,
$$\ec(G_2,\mfrako) = N_{t,\vF}(\mathbbm{1}_{\mfrako,T},8)/8=\widetilde{N}_{t,\vF}(\mathbbm{1}_{\mfrako,T},8)/8=6$$ where $\vF=\vE_\mfrako^c(T)=\{\ve_2,\ve_4,\ve_5,\ve_6,\ve_7\}$ and $\mathbbm{1}_{\mfrako,T}= 1\cdot \ve_2+1\cdot \ve_4+1\cdot \ve_5+1\cdot \ve_6+1\cdot \ve_7 \in M_t(\vF)$. 

To compute $\ec(G_2)$ using the BEST theorem, we need to find all the Eulerian orientations $\mfrako'$ of $G_2$ and get the sum of all $\ec(G,\mfrako')$, i.e., 
$\ec(G_2)=\sum_{\mfrako'\in \mathcal{EO}(G_2)}\ec(G_2,\mfrako')$. Actually, there are $16$ Eulerian orientations of $G_2$, as shown in the following table. 

\begin{center}
\begin{tabular}{|c|c|c|c|}
\hline
Eulerian orientations $\mfrako'$ & $\In_w(G_2,\mfrako')$ & $\ec(G_2,\mfrako')$\\ \hline 
$\ve_0,\ve_1,\ve_2,\ve_3,\ve_4,\ve_5,\ve_6,\ve_7$ & 6 & 6  \\ \hline
$\ve_0,\ve_1,\ve_2,\ve_3,\ve_4^{-1},\ve_5,\ve_6^{-1},\ve_7^{-1}$ & 5 & 5  \\ \hline
$\ve_0,\ve_1,\ve_2,\ve_3,\ve_4^{-1},\ve_5^{-1},\ve_6,\ve_7^{-1}$ & 5 & 5  \\ \hline
$\ve_0,\ve_1^{-1},\ve_2,\ve_3^{-1},\ve_4^{-1},\ve_5,\ve_6^{-1},\ve_7$ & 6 & 6  \\ \hline
$\ve_0,\ve_1^{-1},\ve_2,\ve_3^{-1},\ve_4^{-1},\ve_5^{-1},\ve_6,\ve_7$ & 6 & 6  \\ \hline
$\ve_0,\ve_1^{-1},\ve_2^{-1},\ve_3,\ve_4,\ve_5,\ve_6,\ve_7$ & 6 & 6  \\ \hline
$\ve_0,\ve_1^{-1},\ve_2^{-1},\ve_3,\ve_4^{-1},\ve_5,\ve_6^{-1},\ve_7^{-1}$ & 5 & 5  \\ \hline
$\ve_0,\ve_1^{-1},\ve_2^{-1},\ve_3,\ve_4^{-1},\ve_5^{-1},\ve_6,\ve_7^{-1}$ & 5 & 5  \\ \hline
$\ve_0^{-1},\ve_1,\ve_2,\ve_3^{-1},\ve_4,\ve_5,\ve_6^{-1},\ve_7$ & 5 & 5  \\ \hline
$\ve_0^{-1},\ve_1,\ve_2,\ve_3^{-1},\ve_4,\ve_5^{-1},\ve_6,\ve_7$ & 5 & 5  \\ \hline
$\ve_0^{-1},\ve_1,\ve_2,\ve_3^{-1},\ve_4^{-1},\ve_5^{-1},\ve_6^{-1},\ve_7^{-1}$ & 6 & 6  \\ \hline
$\ve_0^{-1},\ve_1,\ve_2^{-1},\ve_3,\ve_4,\ve_5,\ve_6^{-1},\ve_7^{-1}$ & 6 & 6  \\ \hline
$\ve_0^{-1},\ve_1,\ve_2^{-1},\ve_3,\ve_4,\ve_5^{-1},\ve_6,\ve_7^{-1}$ & 6 & 6  \\ \hline
$\ve_0^{-1},\ve_1^{-1},\ve_2^{-1},\ve_3^{-1},\ve_4,\ve_5,\ve_6^{-1},\ve_7$ & 5 & 5  \\ \hline
$\ve_0^{-1},\ve_1^{-1},\ve_2^{-1},\ve_3^{-1},\ve_4,\ve_5^{-1},\ve_6,\ve_7$ & 5 & 5  \\ \hline
$\ve_0^{-1},\ve_1^{-1},\ve_2^{-1},\ve_3^{-1},\ve_4^{-1},\ve_5^{-1},\ve_6^{-1},\ve_7^{-1}$ & 6 & 6  \\ \hline
\end{tabular}
\end{center}

Note that in the table, the Eulerian orientations $\mfrako'$ are represented by the positive oriented edges with respect to $\mfrako'$. As a result, we get $\ec(G_2)=88$. 

Now the same result can also be computed using the trace formula in Theorem~\ref{T:main}. That is, 
$$\ec(G_2)= N_{2,F}(\mathbbm{1}_T,8)/8=\widetilde{N}_{2,F}(\mathbbm{1}_T,8)/8=88$$ where $F=E^c(T)=\{e_2,e_4,e_5,e_6,e_7\}$ and $\mathbbm{1}_T= 1\cdot e_2+1\cdot e_4+1\cdot e_5+1\cdot e_6+1\cdot e_7 \in M_2(F)$. 

\end{example}

\section{Reduction of computation}
The computations in Theorem~\ref{T:main}, Theorem~\ref{T:counting0}, and Theorem~\ref{T:counting} can be reduced based on certain symmetries of the spectra distribution of the twisted adjacency matrices over $C_1(G,\mbbZ/t\mbbZ)$.

\subsection{Reduction based on spectral antisymmetry of twisted graph adjacency}
As discovered in \cite{LR2024spectral}, spectral antisymmetry of twisted graph adjacency is a phenomenon that the spectra of the twisted vertex and edge adjacency matrices of a graph $G$ have an antisymmetric distribution over the character group $\mcalX(G)$ of $G$ with two poles being the trivial character and a special character called the \emph{canonical character} (see Theorem~3.1 in \cite{LR2024spectral} for a precise statement). The canonical character is always a  $2$-torsion of $\mcalX(G)$, and in this section,  our discussion will be focused on the $2$-torsion subgroup of $\mcalX(G)$, which is non-canonically isomorphic to $M_2(E^c(T) )$ with $T$ being any spanning tree of $G$. The canonical character has a representative in $M_2(E^c(T) )$, which we call the canonical element in $M_2(E^c(T))$ defined as follows. 

\begin{definition}
For a spanning tree $T$ of $G$,  we say $\gamma_T=\sum_{e\in E^c(T)}\gamma_e\cdot e \in M_2(E^c(T) )$ is the \emph{canonical element in $M_2(E^c(T))$} if $\gamma_e=0$ whenever the length of the unique path on $T$ connecting the adjacent vertices of $e$ is odd, and $\gamma_e=1$ otherwise. 
\end{definition}

The above definition of the canonical element in $M_2(E^c(T))$ is essentially the same as Construction~4.2 in \cite{LR2024spectral}, and spectral antisymmetry in this special setting is stated in Lemma~4.6 in \cite{LR2024spectral}, which itself is just enough for our application of spectral antisymmetry here. The result is stated in the following theorem.

\begin{theorem} \label{T:red_anti}
Let $G$ be a non-bipartite Eulerian graph of genus $g$ with $m$ edges, and $T$ a spanning tree of $G$. Let  $\gamma_T$ be the canonical element in $M_2(E^c(T) )$, and $\{S_1,S_2\}$ any partition of $M_2(E^c(T) )$ such that for any  $\gamma\in M_2(E^c(T) )$, $\gamma\neq \gamma+\gamma_T$ and if $\gamma\in S_1$, then $\gamma+\gamma_T\in S_2$. Then the number of Eulerian cycles on $G$ can be computed as follows:
\begin{align*}
\ec(G) &= \frac{1}{m\cdot 2^{g-1}}\sum_{\gamma\in S_i}(-1)^{\sigma(\gamma)}\trace\left(W_{1,\gamma}^m\right) \\
&= \frac{1}{m\cdot 2^{g-1}}\sum_{\gamma\in S_i}(-1)^{\sigma(\gamma)}\trace\left(A_\gamma^m\right)
\end{align*}
for $i=1,2$. 
\end{theorem}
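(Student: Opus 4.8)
The plan is to derive Theorem~\ref{T:red_anti} from Theorem~\ref{T:main} by pairing the summands of the full sum over $M_2(E^c(T))$ through the involution $\gamma\mapsto\gamma+\gamma_T$ and showing that the two summands in each pair coincide. Since $G$ is non-bipartite, some fundamental cycle is odd, so $\gamma_T\neq 0$ and $\gamma\mapsto\gamma+\gamma_T$ is a fixed-point-free involution of $M_2(E^c(T))$; this is exactly the content of the hypothesis $\gamma\neq\gamma+\gamma_T$, and it guarantees that $\{S_1,S_2\}$ genuinely splits the $2^g$ elements into $2^{g-1}$ pairs with $\gamma\in S_1\Leftrightarrow\gamma+\gamma_T\in S_2$. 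The goal then reduces to establishing, for every $\gamma\in M_2(E^c(T))$, the termwise identity
\[
(-1)^{\sigma(\gamma+\gamma_T)}\trace\left(W_{1,\gamma+\gamma_T}^m\right)=(-1)^{\sigma(\gamma)}\trace\left(W_{1,\gamma}^m\right),
\]
and likewise with $A_\gamma$ in place of $W_{1,\gamma}$. Granting this, the bijection $S_1\to S_2$, $\gamma\mapsto\gamma+\gamma_T$, forces the two half-sums to be equal, hence each equals half of the full sum in Theorem~\ref{T:main}, converting the normalizing factor $2^g$ into $2^{g-1}$ and yielding the formula for both $i=1,2$ simultaneously.

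Two ingredients feed this termwise identity. First I would invoke the spectral antisymmetry of twisted graph adjacency in this $2$-torsion setting (Lemma~4.6 of \cite{LR2024spectral}): twisting by the canonical element negates the spectrum, so $\spec W_{1,\gamma+\gamma_T}=-\spec W_{1,\gamma}$ and $\spec A_{\gamma+\gamma_T}=-\spec A_\gamma$ as multisets. Combined with Lemma~\ref{L:trace}, which expresses the traces as sums of $m$-th powers of eigenvalues, this gives $\trace\left(W_{1,\gamma+\gamma_T}^m\right)=(-1)^m\trace\left(W_{1,\gamma}^m\right)$ and the same relation for $A_\gamma$. Second, since $\sigma$ is additive modulo $2$, we have $(-1)^{\sigma(\gamma+\gamma_T)}=(-1)^{\sigma(\gamma)}(-1)^{\sigma(\gamma_T)}$. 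Substituting both facts, the displayed identity collapses to the single scalar condition
\[
(-1)^{\sigma(\gamma_T)+m}=1,\qquad\text{i.e.}\qquad \sigma(\gamma_T)\equiv m \pmod 2.
\]

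Verifying $\sigma(\gamma_T)\equiv m\pmod 2$ is the heart of the argument and the step I expect to require the most care, since this is where the Eulerian hypothesis, the non-bipartite hypothesis, and the definition of $\gamma_T$ all meet. My plan is to identify $\chi_{\gamma_T}$ as the canonical character, namely $\chi_{\gamma_T}(C)=(-1)^{|C|}$ for every circuit $C$. To check this I would evaluate on the basis of fundamental cycles $\{C_e:e\in E^c(T)\}$: the restriction of $C_e^{\ab}$ to $E^c(T)$ is just $e$, so $\langle\gamma_T,C_e^{\ab}\rangle=\gamma_e$, while $|C_e|=1+\ell(e)$ with $\ell(e)$ the tree-path length, and the defining dichotomy of $\gamma_T$ (namely $\gamma_e=0$ iff $\ell(e)$ is odd) is precisely the statement $\gamma_e\equiv|C_e|\pmod 2$. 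By $\mbbZ/2\mbbZ$-linearity of both $\langle\gamma_T,\cdot\rangle$ and length-mod-$2$ on $H_1(G,\mbbZ/2\mbbZ)$, together with the observation that $\langle\mathbbm{1},C^{\ab}_{[2]}\rangle\equiv|C|\pmod 2$, this upgrades to $\langle\gamma_T,\alpha\rangle\equiv\langle\mathbbm{1},\alpha\rangle\pmod 2$ for all $\alpha\in H_1(G,\mbbZ/2\mbbZ)$. Finally, because $G$ is Eulerian we have $\mathbbm{1}\in H_1(G,\mbbZ/2\mbbZ)$ by Proposition~\ref{P:eulerian}, so taking $\alpha=\mathbbm{1}$ gives $\sigma(\gamma_T)=\langle\gamma_T,\mathbbm{1}\rangle\equiv\langle\mathbbm{1},\mathbbm{1}\rangle=m\pmod 2$, as required. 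Assembling this sign computation with the antisymmetry relation and the pairing of $S_1$ with $S_2$ completes the proof, the $A_\gamma$ case being verbatim identical.
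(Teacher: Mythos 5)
Your proposal is correct, and its skeleton coincides with the paper's proof: the fixed-point-free involution $\gamma\mapsto\gamma+\gamma_T$ pairing $S_1$ with $S_2$ (the paper's Lemma~\ref{L:bipartite}), spectral antisymmetry imported from Lemma~4.6 of \cite{LR2024spectral} combined with the eigenvalue expression of traces from Lemma~\ref{L:trace} (packaged in the paper as Lemma~\ref{L:antisymmetry}), and the reduction of the whole theorem to the scalar congruence $\sigma(\gamma_T)\equiv m\pmod 2$. The genuine difference lies exactly at the step you flag as the heart. The paper proves $\sigma(\gamma_T)\equiv m\pmod 2$ as Lemma~\ref{L:parity} by a combinatorial argument: $G\setminus\supp(\gamma_T)$ is the maximal bipartite subgraph of $G$ containing $T$ (Lemma~\ref{L:max_bipartite}), and a degree-parity count over one side of its bipartition shows that if $|E(G)\setminus\supp(\gamma_T)|$ were odd then $G$ would retain a vertex of odd degree, contradicting that $G$ is Eulerian. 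You instead argue algebraically: $\langle\gamma_T,\cdot\rangle$ and $\langle\mathbbm{1},\cdot\rangle$ are $\mbbZ/2\mbbZ$-linear functionals on $H_1(G,\mbbZ/2\mbbZ)$ that agree on the fundamental-cycle basis furnished by Lemma~\ref{L:rhoT} (this is precisely the defining dichotomy of $\gamma_T$, i.e.\ $\gamma_e\equiv|C_e|\pmod 2$), hence agree everywhere; evaluating at $\alpha=\mathbbm{1}$, which lies in $H_1(G,\mbbZ/2\mbbZ)$ exactly because $G$ is Eulerian (Proposition~\ref{P:eulerian}), gives $\sigma(\gamma_T)=\langle\gamma_T,\mathbbm{1}\rangle\equiv\langle\mathbbm{1},\mathbbm{1}\rangle=m\pmod 2$. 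Your route is shorter, reuses machinery already established in the paper, and makes conceptually transparent both why the Eulerian hypothesis enters and that $\chi_{\gamma_T}$ is the length-parity character $C\mapsto(-1)^{|C|}$; the paper's route is elementary graph theory, independent of Proposition~\ref{P:eulerian}, and yields as a by-product the structural fact that deleting $\supp(\gamma_T)$ produces the maximal bipartite subgraph containing $T$. (Both you and the paper read the hypothesis on $\{S_1,S_2\}$ as the biconditional $\gamma\in S_1\Leftrightarrow\gamma+\gamma_T\in S_2$, so no discrepancy arises there.)
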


Before proving Theorem~\ref{T:red_anti}, we will first prove a series of lemmas. For each $\gamma=\sum_{e\in E^c(T)}\gamma_e\cdot e\in M_2(E^c(T) )$, the \emph{support} of $\gamma$ is defined as $\supp(\gamma):=\{e\in E^c(T)|\gamma_e=1\}$. 

\begin{lemma}\label{L:bipartite}
The following are equivalent:
\begin{enumerate}[(i)]
\item $\gamma_T=0$;
\item $\supp(\gamma_T)= \emptyset$;
\item $G$ is bipartite. 
\end{enumerate}
\end{lemma}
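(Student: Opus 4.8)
The plan is to prove the three-way equivalence by showing $(i)\Leftrightarrow(ii)$ trivially and then establishing $(ii)\Leftrightarrow(iii)$ through the combinatorics of the spanning tree $T$. The equivalence $(i)\Leftrightarrow(ii)$ is immediate: by definition $\gamma_T=\sum_{e\in E^c(T)}\gamma_e\cdot e$, so $\gamma_T=0$ in $M_2(E^c(T))$ precisely when every coefficient $\gamma_e$ vanishes, which is exactly the statement that $\supp(\gamma_T)=\emptyset$. So the real content is in relating the support of $\gamma_T$ to bipartiteness.

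For the direction $(iii)\Rightarrow(ii)$, I would use the standard characterization that a connected graph $G$ is bipartite if and only if it contains no odd cycle. Suppose $G$ is bipartite with parts $X$ and $Y$. For each $e\in E^c(T)$, consider its endpoints $u=\ve(0)$ and $w=\ve(1)$, together with the unique path $P_e$ on $T$ connecting them. Since $T\subseteq G$, the tree path $P_e$ is a walk in the bipartite graph $G$, so its length equals the ``parity distance'' between $u$ and $w$: it is even iff $u,w$ lie in the same part and odd iff they lie in different parts. But $e$ itself is an edge of $G$, so $u$ and $w$ must lie in different parts, forcing the length of $P_e$ to be odd. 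By the definition of $\gamma_T$, this means $\gamma_e=0$ for every $e\in E^c(T)$, hence $\supp(\gamma_T)=\emptyset$.

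For the contrapositive of $(ii)\Rightarrow(iii)$, suppose $G$ is not bipartite, so $G$ contains an odd cycle $Z$. The key idea is to express the homology class of $Z$ in terms of the fundamental cycles $C_e$ (one for each $e\in E^c(T)$, as constructed in the proof of Lemma~\ref{L:rhoT}), and track parities. Working in $\mbbZ/2\mbbZ$, write the abelianization $Z^{\ab}_{[2]}=\sum_{e\in \supp(Z)\cap E^c(T)} C_e^{\ab}$ (modulo tree edges), and observe that the total length of $Z$ is congruent mod $2$ to the sum of the lengths of the corresponding fundamental cycles $C_e$. Since each $C_e$ has length $1+\ell_e$ where $\ell_e$ is the length of the tree path $P_e$, its length parity is governed exactly by whether $\gamma_e=1$ (even $\ell_e$, so $C_e$ has odd length) or $\gamma_e=0$ (odd $\ell_e$, so $C_e$ has even length). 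If $\supp(\gamma_T)=\emptyset$, every fundamental cycle would have even length, and then every cycle of $G$, being a $\mbbZ/2\mbbZ$-combination of fundamental cycles, would have even length — contradicting the existence of the odd cycle $Z$. Hence $\supp(\gamma_T)\neq\emptyset$.

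The main obstacle I anticipate is making the parity bookkeeping in the last step fully rigorous, specifically the claim that the length parity of an arbitrary cycle equals the sum mod $2$ of the length parities of its constituent fundamental cycles. The subtlety is that when one adds fundamental cycles over $\mbbZ/2\mbbZ$, tree edges can cancel in pairs, and each such cancellation removes two edge-traversals, which preserves parity; non-tree edges appear with multiplicity governed by the $\mbbZ/2\mbbZ$ coefficients. I would handle this by defining a parity functional $\lambda\colon C_1(G,\mbbZ/2\mbbZ)\to\mbbZ/2\mbbZ$ sending each oriented edge to $1$ (i.e. total length mod $2$), noting it is $\mbbZ/2\mbbZ$-linear, and checking that on any genuine cycle $Z$ the value $\lambda(Z^{\ab}_{[2]})$ agrees with the length of $Z$ mod $2$ — this works because backtrack-free closed walks traverse each underlying edge a number of times matching its mod-$2$ coefficient up to pairs. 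Then $\lambda(C_e^{\ab})=\gamma_e$ by the length computation above, and linearity gives the result directly.
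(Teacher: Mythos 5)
Your proof is correct, but the direction $(ii)\Rightarrow(iii)$ takes a genuinely different route from the paper. The paper disposes of $(ii)\Leftrightarrow(iii)$ in one stroke with a single combinatorial observation: $2$-color $V(G)$ by the parity of tree distance in $T$; then by definition $e\in\supp(\gamma_T)$ exactly when both endpoints of $e$ lie in the same color class, so $\supp(\gamma_T)=\emptyset$ iff the tree $2$-coloring is a proper bipartition of all of $G$, iff $G$ is bipartite. Your $(iii)\Rightarrow(ii)$ is essentially this observation (phrased via a bipartition of $G$ rather than of $T$), but your $(ii)\Rightarrow(iii)$ replaces it with a homological argument: odd-cycle characterization of non-bipartiteness, the fundamental-cycle basis of $H_1(G,\mbbZ/2\mbbZ)$ from Lemma~\ref{L:rhoT}, and the $\mbbZ/2\mbbZ$-linear parity functional $\lambda=\langle\mathbbm{1},\cdot\rangle$ with $\lambda(C_e^{\ab})=\gamma_e$. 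This is heavier than needed but sound: the worry you raise about parity bookkeeping is harmless, since for \emph{any} closed walk $Z$ (backtracks included) the mod-$2$ coefficient of each edge in $Z^{\ab}_{[2]}$ is its number of traversals mod $2$, so $\lambda(Z^{\ab}_{[2]})\equiv\mathrm{length}(Z)\bmod 2$ holds automatically; and your parenthetical ``modulo tree edges'' is unnecessary, as the decomposition $Z^{\ab}_{[2]}=\sum_e C_e^{\ab}$ is an exact identity in $H_1(G,\mbbZ/2\mbbZ)$ because $\rho_T$ is injective. What your approach buys is conceptual: the identity $\lambda(C_e^{\ab})=\gamma_e$ exhibits $\gamma_T$ as the representative of the length-parity character on $H_1(G,\mbbZ/2\mbbZ)$, which is precisely the role the canonical element plays in the spectral antisymmetry results (Lemma~\ref{L:antisymmetry} and Theorem~\ref{T:red_anti}); the paper's argument is shorter and more elementary but leaves that connection implicit.
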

\begin{proof}
The equivalence of (i) and (ii) directly follows from definition. 
Now note that the spanning tree $T$ is itself a bipartite graph, meaning that we can partition $V(G)$ into the disjoint union of $V_1$ and $V_2$ such that the length of unique path on $T$ connecting two vertices $v$ and $w$ is even if and only if both $v$ and $w$ belong to one of $V_1$ and $V_2$. Therefore, an edge $e$ belongs to the support of $\gamma_T$ if and only if the adjacent vertices of $e$ belong to one of $V_1$ and $V_2$. This implies the equivalence of (ii) and (iii). 
\end{proof}

\begin{lemma} \label{L:max_bipartite}
For a graph $G$ and a spanning tree $T$ of $G$, let $G'=G\setminus \supp(\gamma_T)$, i.e., $G'$ is derived by deleting the edges in $\supp(\gamma_T)$ from $G$. Then $G'$ is the maximal bipartite subgraph of $G$ containing $T$ as a subgraph. 
\end{lemma}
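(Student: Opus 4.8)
The plan is to produce an explicit bipartition of $V(G)$ that certifies $G'$ is bipartite, and then to argue that every bipartite subgraph of $G$ containing $T$ must respect that same bipartition, forcing it to omit all of $\supp(\gamma_T)$.

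First I would reuse the partition $V(G)=V_1\sqcup V_2$ introduced in the proof of Lemma~\ref{L:bipartite}: two vertices lie in the same part precisely when the unique path joining them on $T$ has even length. Since $T$ is a tree, this is its proper $2$-coloring, unique up to interchanging $V_1$ and $V_2$. Next I would check that $G'$ is bipartite with respect to $(V_1,V_2)$ by verifying that every edge of $G'$ joins $V_1$ to $V_2$. The edges of $G'$ are of two kinds. A tree edge $e\in E(T)$ joins its endpoints by a $T$-path of length $1$, so its endpoints lie in opposite parts. A surviving non-tree edge $e\in E^c(T)\setminus\supp(\gamma_T)$ has $\gamma_e=0$, which by the definition of $\gamma_T$ means the $T$-path between its endpoints has odd length, so again its endpoints lie in opposite parts. (Loops are automatically discarded, since the trivial $T$-path has even length $0$, placing every loop in $\supp(\gamma_T)$.) Because $\supp(\gamma_T)\subseteq E^c(T)$, no tree edge is removed, so $G'$ contains $T$ and is in particular connected and spanning.

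For maximality I would take an arbitrary bipartite subgraph $H$ of $G$ with $T\subseteq H$ and show $H\subseteq G'$. As $H$ is bipartite it has a proper $2$-coloring; restricting that coloring to the connected spanning tree $T$ must recover, up to a global swap, the coloring $(V_1,V_2)$. Hence every edge of $H$ crosses between $V_1$ and $V_2$. But any $e\in\supp(\gamma_T)$ has both endpoints in the same part (its $T$-path has even length), so $e\notin E(H)$; therefore $E(H)\subseteq E(G)\setminus\supp(\gamma_T)=E(G')$. Combined with the fact that $G'$ is itself bipartite and contains $T$, this shows $G'$ is the unique maximal bipartite subgraph of $G$ containing $T$.

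The main obstacle is the maximality step, and precisely the assertion that the $2$-coloring of any bipartite $H\supseteq T$ is forced by $T$. The key point to pin down is that $T$ is connected and spanning, so its proper $2$-coloring is unique up to swapping the two classes; this prevents the bipartition of the larger graph $H$ from differing from $(V_1,V_2)$. The remainder is a direct unwinding of the definition of $\gamma_T$ in terms of the parity of $T$-path lengths.
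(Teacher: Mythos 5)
Your proof is correct and takes essentially the same route as the paper, which derives the lemma as an immediate corollary of Lemma~\ref{L:bipartite}: the proof of that lemma already establishes your key characterization that $e\in\supp(\gamma_T)$ exactly when both endpoints of $e$ lie in the same class of the $2$-coloring of $V(G)$ induced by $T$. Your write-up simply makes explicit the details the paper treats as immediate, in particular the maximality step via the uniqueness (up to swapping classes) of the proper $2$-coloring of the connected spanning tree $T$.
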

\begin{proof}
This is an immediate corollary of Lemma~\ref{L:bipartite}. 
\end{proof}

\begin{lemma} \label{L:parity}
For an Eulerian graph $G$ and a spanning tree $T$ of $G$, the cardinality of the support of $\gamma_T$ has the same parity as the number of edges of $G$. 
\end{lemma}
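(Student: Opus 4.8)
The plan is to recast $|\supp(\gamma_T)|$ in terms of the edge cut that $T$ imposes on $V(G)$, and then to exploit the elementary fact that in a graph all of whose vertices have even degree every edge cut has even size. Concretely, I would fix a root of $T$ and let $V(G)=V_1\sqcup V_2$ be the partition into the vertices lying at even, respectively odd, distance from the root along $T$. This is exactly the bipartition used in the proof of Lemma~\ref{L:bipartite}, for which an edge joins two vertices on the same side precisely when the $T$-path between its endpoints has even length.

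By the definition of $\gamma_T$, this identifies $\supp(\gamma_T)$ as the set of non-tree edges whose two endpoints lie on the same side of $V_1\sqcup V_2$ (loops included, since a loop is joined by the length-$0$, hence even, path on $T$). Every edge of $T$ crosses between $V_1$ and $V_2$, so deleting $\supp(\gamma_T)$ leaves exactly the edges of $G$ crossing the partition; this matches Lemma~\ref{L:max_bipartite}, which presents $G'=G\setminus\supp(\gamma_T)$ as a bipartite graph with parts $V_1,V_2$. Writing $c$ for the number of edges of $G$ crossing $(V_1,V_2)$, I then have $|E(G')|=c$ and $|\supp(\gamma_T)|=m-c$, so the claim reduces to showing that $c$ is even.

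The final step is the cut-parity argument carried out in $G$ itself (not in $G'$, which need not be Eulerian). Summing degrees over one side gives
\[
\sum_{v\in V_1}\deg_G(v)=2e_1+c,
\]
where $e_1$ counts the edges of $G$ with both endpoints in $V_1$ (loops included): each contributes $2$ to the sum, while each of the $c$ crossing edges contributes $1$. Since $G$ is Eulerian, every $\deg_G(v)$ is even, so the left-hand side is even; hence $c\equiv 0\pmod 2$, and therefore $|\supp(\gamma_T)|=m-c\equiv m\pmod 2$, as desired. I do not expect a serious obstacle here: the only delicate point is the bookkeeping for loops and multiple edges in the degree sum, but since every loop lies in $\supp(\gamma_T)$ and contributes $2$ to the degree of its vertex, it does not disturb the parity of $c$, and the whole argument is essentially the standard even-cut observation applied to the tree-induced bipartition.
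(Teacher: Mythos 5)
Your proof is correct, and it follows the paper's skeleton up to the last step before diverging in how the key parity fact is established. Like the paper, you use the tree-induced bipartition $V_1\sqcup V_2$ (the even/odd distance classes of Lemma~\ref{L:bipartite}), identify $E(G')=E(G)\setminus\supp(\gamma_T)$ with the crossing edges of the maximal bipartite subgraph of Lemma~\ref{L:max_bipartite}, and reduce the claim to showing $|E(G')|=c$ is even. Where the paper finishes by contradiction --- assuming $c$ odd, observing that $G'$ then has an odd number of odd-degree vertices in $V_1$, and asserting (with an unelaborated ``straightforward to verify'') that this odd count is invariant as the same-side edges of $\supp(\gamma_T)$ are re-attached, so $G$ could not be Eulerian --- you instead compute directly in $G$: summing degrees over $V_1$ gives $\sum_{v\in V_1}\deg_G(v)=2e_1+c$, and since $G$ Eulerian forces every degree even, $c$ is even. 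The two arguments encode the same even-cut fact, but yours is the cleaner execution: it is direct rather than by contradiction, and it replaces the paper's implicit invariant-tracking (each re-attached same-side edge flips the parity of zero or two $V_1$-degrees, loops flipping none) with a single explicit handshake count in which your bookkeeping for loops and parallel edges --- loops lie in $\supp(\gamma_T)$ and contribute $2$, parallel copies of tree edges are crossing edges --- is handled correctly.
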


\begin{proof}
Let $G'=G\setminus \supp(\gamma_T)$. Then by Lemma~\ref{L:max_bipartite}, $G'$ is bipartite and $E(G)=\supp(\gamma_T)\sqcup E(G')$. Hence we just need to show $|E(G')|$ must be even. Suppose for contradiction that $|E(G')|$ is odd. Partition $V(G)$ into $V_1$ and $V_2$ such that each edge of $G'$ has one adjacent vertex in $V_1$ and the other in $V_2$. In graph $G'$,  there are an odd number of vertices in $V_1$  with an odd degree. Attaching edges from $\supp(\gamma_T)$ back to $G'$, we will recover $G$, and during this process, it is straightforward to verify that during this process, there are always an odd number of vertices in $V_1$ with an odd degree. As a result, $G$ cannot be Eulerian, a contradiction. 
\end{proof}

\begin{lemma}[Spectral antisymmetry] \label{L:antisymmetry}
For a graph $G$, a spanning tree $T$ of $G$, and  the  the canonical element $\gamma_T$ in $M_2(E^c(T) )$, we have 
\begin{enumerate}[(i)]
\item $\spec W_{1,\gamma+\gamma_T}=-\spec W_{1,\gamma}$ and $\spec A_{\gamma+\gamma_T} = -\spec A_\gamma$, and
\item for any length $l$, $\trace\left(W_{1,\gamma+\gamma_T}^l\right) = (-1)^l \trace\left(W_{1,\gamma}^l\right)$ and $\trace\left(A_{\gamma+\gamma_T}^l\right) = (-1)^l \trace\left(A_\gamma^l\right)$
\end{enumerate}
\end{lemma}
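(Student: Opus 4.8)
The plan is to prove part (i) directly, by producing for each matrix a diagonal similarity that sends it to the negative of its $\gamma+\gamma_T$ twist; part (ii) then follows instantly from part (i) together with the identity $\trace(M^l)=\sum_{\lambda\in\spec M}\lambda^l$ recorded in Lemma~\ref{L:circuit-counting} and Lemma~\ref{L:trace}.

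First I would fix a proper $2$-coloring $c\colon V(G)\to\{0,1\}$ of the spanning tree $T$, which exists and is unique up to swapping colors since a tree is bipartite; by construction $c(v)+c(w)$ is congruent modulo $2$ to the length of the unique path in $T$ joining $v$ to $w$. The conceptual heart of the argument, which I would isolate as a preliminary observation, is the single sign identity
$$(-1)^{\gamma_{T,e}}=-(-1)^{c(\ve(0))+c(\ve(1))}\qquad\text{for every }\ve\in\vE(G),$$
where $\gamma_{T,e}$ denotes the coefficient of $\gamma_T$ at the underlying edge $e$ of $\ve$, with the convention $\gamma_{T,e}=0$ for tree edges. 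This is merely a restatement of the definition of the canonical element: $\gamma_{T,e}=1$ exactly when the tree path between the endpoints of $e$ is even, i.e.\ when $c(\ve(0))=c(\ve(1))$, and one checks the two parities (including the length-$0$ path for a loop) against both sides.

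For the vertex adjacency matrix I would set $D:=\operatorname{diag}\big((-1)^{c(v_1)},\dots,(-1)^{c(v_n)}\big)$. Since for $t=2$ the entry $a_{ij}$ of $A_\gamma$ sums $(-1)^{\gamma_e}$ over oriented edges $\ve$ with $\ve(0)=v_i$, $\ve(1)=v_j$, conjugation by $D$ multiplies the contribution of each such $\ve$ by $(-1)^{c(v_i)+c(v_j)}=(-1)^{c(\ve(0))+c(\ve(1))}=-(-1)^{\gamma_{T,e}}$, turning $(-1)^{\gamma_e}$ into $-(-1)^{\gamma_e+\gamma_{T,e}}$. Hence $DA_\gamma D^{-1}=-A_{\gamma+\gamma_T}$, and as similar matrices share a spectrum, $\spec A_{\gamma+\gamma_T}=-\spec A_\gamma$.

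The edge adjacency matrix is the step I expect to be the main obstacle, because its twist sign $(-1)^{\gamma_{e_i}}$ is attached asymmetrically to the source edge $\ve_i$ rather than to a pair of endpoints, so the same vertex coloring cannot be used verbatim. The resolution I would use is to diagonalize by the \emph{initial vertex} of each oriented edge: put $D':=\operatorname{diag}\big((-1)^{c(\ve(0))}\big)_{\ve\in\vE(G)}$. When $\ve_i$ feeds into $\ve_j$ we have $\ve_j(0)=\ve_i(1)$, so the conjugation factor telescopes to $(-1)^{c(\ve_i(0))-c(\ve_j(0))}=(-1)^{c(\ve_i(0))+c(\ve_i(1))}=-(-1)^{\gamma_{T,e_i}}$, which multiplies the entry $w_{ij}=(-1)^{\gamma_{e_i}}$ into $-(-1)^{\gamma_{e_i}+\gamma_{T,e_i}}$, exactly minus the corresponding entry of $W_{1,\gamma+\gamma_T}$; zero entries remain zero. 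Thus $D'W_{1,\gamma}(D')^{-1}=-W_{1,\gamma+\gamma_T}$, giving $\spec W_{1,\gamma+\gamma_T}=-\spec W_{1,\gamma}$ and finishing (i). Finally I would deduce (ii) by summing $l$-th powers over the spectra: $\trace(A_{\gamma+\gamma_T}^l)=\sum_{\lambda\in\spec A_\gamma}(-\lambda)^l=(-1)^l\trace(A_\gamma^l)$, and identically for $W_{1,\gamma}$.
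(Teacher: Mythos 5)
Your proposal is correct, and it takes a genuinely different route from the paper. The paper does not reprove spectral antisymmetry at all: part (i) is disposed of in one line as a restatement of Lemma~4.6 of \cite{LR2024spectral} (specialized to the $2$-torsion subgroup of the character group), and part (ii) is then deduced exactly as you do, via $\trace(M^l)=\sum_{\lambda\in\spec M}\lambda^l$. You instead give a self-contained gauge-transformation argument: a proper $2$-coloring $c$ of the spanning tree yields the sign identity $(-1)^{\gamma_{T,e}}=-(-1)^{c(\ve(0))+c(\ve(1))}$ (your case check, including loops and tree edges with the convention $\gamma_{T,e}=0$, is right, and it is exactly the definition of the canonical element), and the diagonal matrices $D$ indexed by vertex colors and $D'$ indexed by the color of the \emph{initial} vertex of each oriented edge give $DA_\gamma D^{-1}=-A_{\gamma+\gamma_T}$ and $D'W_{1,\gamma}(D')^{-1}=-W_{1,\gamma+\gamma_T}$; the telescoping $c(\ve_j(0))=c(\ve_i(1))$ along the feeds-into relation is the correct fix for the asymmetric placement of the twist on the source edge, and it works because for $t=2$ the two conventions $\epsilon_t^{\pm\gamma_i}$ in Definition~\ref{D:twist} coincide. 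What each approach buys: the paper's citation is short and situates the lemma inside the general antisymmetry phenomenon over the full character group $\mcalX(G)$, of which this is only the $2$-torsion shadow; your argument is elementary, keeps this part of the paper self-contained (matching the paper's stated preference in Section~3), and in fact proves something slightly stronger than the lemma claims, namely an explicit signed similarity $A_{\gamma+\gamma_T}\sim -A_\gamma$ and $W_{1,\gamma+\gamma_T}\sim -W_{1,\gamma}$, from which both (i) (as multisets, with multiplicities) and (ii) follow immediately. Your proof also makes transparent why no bipartiteness hypothesis appears in the lemma itself: non-bipartiteness enters only later, in Theorem~\ref{T:red_anti}, to guarantee $\gamma_T\neq 0$.
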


\begin{proof}
(i) is a restatement of Lemma~4.6 in \cite{LR2024spectral} specified to the case of the 2-torsion subgroup of the character group of $G$.

(ii) follows from (i) and the fact that $\trace\left(W_{1,\gamma}^l\right)=\sum_{\lambda\in \spec W_{1,\gamma}}\lambda^l$ and $\trace\left(A_\gamma^l\right)=\sum_{\lambda\in \spec A_\gamma}\lambda^l$ (Lemma~\ref{L:trace}). 
\end{proof}

Now we can prove Theorem~\ref{T:red_anti}. 
\begin{proof}[Proof of Theorem~\ref{T:red_anti}]
By Lemma~\ref{L:bipartite}, since $G$ is non-bipartite, $\gamma_T\neq 0$, which means  $\gamma\neq \gamma+\gamma_T$ for all $\gamma\in M_2(E^c(T) )$. Consequently, for all $\gamma\in S_1$, $\gamma+\gamma_T\in S_2$ and vice versa. Moreover, by Lemma~\ref{L:parity} and Lemma~\ref{L:antisymmetry}, 
$$(-1)^{\sigma(\gamma+\gamma_T)}\trace\left(W_{1,\gamma+\gamma_T}^m\right)=(-1)^{\sigma(\gamma)+\sigma(\gamma_T)+m}\trace\left(W_{1,\gamma}^m\right) =(-1)^{\sigma(\gamma)}\trace\left(W_{1,\gamma}^m\right) $$
and 
$$(-1)^{\sigma(\gamma+\gamma_T)}\trace\left(A_{\gamma+\gamma_T}^m\right)=(-1)^{\sigma(\gamma)+\sigma(\gamma_T)+m}\trace\left(A_\gamma^m\right) =(-1)^{\sigma(\gamma)}\trace\left(A_\gamma^m\right).$$

This implies $\sum_{\gamma\in S_1}(-1)^{\sigma(\gamma)}\trace\left(W_{1,\gamma}^m\right) = \sum_{\gamma\in S_2}(-1)^{\sigma(\gamma)}\trace\left(W_{1,\gamma}^m\right)$
and $\sum_{\gamma\in S_1}(-1)^{\sigma(\gamma)}\trace\left(A_\gamma^m\right) = \sum_{\gamma\in S_2}(-1)^{\sigma(\gamma)}\trace\left(A_\gamma^m\right)$, and the theorem follows. 
\end{proof}

The following lemma provides a choice of $S_1$ and $S_2$ for effectively applying Theorem~\ref{T:red_anti} for computation. 
\begin{lemma}
Let $G$ be a non-bipartite Eulerian graph with a spanning tree $T$. Choose arbitrarily $e\in \supp(\gamma_T)$. Let $S_1=M_2(E^c(T)\setminus \{e\})$ and $S_2=M_2(E^c(T))\setminus S_1$. Then $S_2=\{\gamma+\gamma_T|\gamma\in S_1\}$ and $S_1=\{\gamma+\gamma_T|\gamma\in S_2\}$. 
\end{lemma}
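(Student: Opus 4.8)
The plan is to show that the given sets $S_1$ and $S_2$ satisfy the hypotheses of Theorem~\ref{T:red_anti}, namely that they partition $M_2(E^c(T))$ and that adding $\gamma_T$ swaps them. The key structural observation is that $\gamma_T$, being the canonical element, has the chosen edge $e$ in its support, so the coefficient of $e$ in $\gamma_T$ is $1$. I would first recall that $S_1 = M_2(E^c(T)\setminus\{e\})$ consists precisely of those $\gamma \in M_2(E^c(T))$ whose coefficient of $e$ is $0$, while $S_2 = M_2(E^c(T))\setminus S_1$ consists of those whose coefficient of $e$ is $1$. This immediately verifies that $\{S_1,S_2\}$ is a partition.

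The heart of the argument is tracking the coefficient of $e$ under the map $\gamma \mapsto \gamma+\gamma_T$. Since we work over $\mbbZ/2\mbbZ$ and the coefficient of $e$ in $\gamma_T$ equals $1$ (as $e\in\supp(\gamma_T)$), adding $\gamma_T$ flips the $e$-coefficient of any $\gamma$ from $0$ to $1$ or from $1$ to $0$. Concretely, if $\gamma\in S_1$, then its $e$-coefficient is $0$, so the $e$-coefficient of $\gamma+\gamma_T$ is $0+1=1$, placing $\gamma+\gamma_T$ in $S_2$; hence $\{\gamma+\gamma_T \mid \gamma\in S_1\}\subseteq S_2$. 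The reverse inclusion follows because the map $\gamma\mapsto\gamma+\gamma_T$ is an involution on $M_2(E^c(T))$ (as $\gamma_T+\gamma_T=0$), so it is a bijection; since it sends $S_1$ into $S_2$ and is its own inverse, it must send $S_2$ into $S_1$ as well, giving equality $S_2 = \{\gamma+\gamma_T\mid\gamma\in S_1\}$ and symmetrically $S_1=\{\gamma+\gamma_T\mid\gamma\in S_2\}$.

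I do not anticipate a genuine obstacle here; the statement is essentially a bookkeeping verification that hinges entirely on the single fact that the $e$-coefficient of $\gamma_T$ is nonzero, which is guaranteed by the choice $e\in\supp(\gamma_T)$. The only point requiring mild care is ensuring that $\supp(\gamma_T)$ is nonempty so that such an $e$ exists; this is exactly the non-bipartite hypothesis, which by Lemma~\ref{L:bipartite} gives $\gamma_T\neq 0$ and hence $\supp(\gamma_T)\neq\emptyset$. With $e$ thus available, the involution argument closes the proof cleanly, and this lemma then furnishes a concrete admissible partition for the application of Theorem~\ref{T:red_anti}.
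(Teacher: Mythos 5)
Your proof is correct and follows essentially the same route as the paper's: both arguments reduce to the observation that since $e\in\supp(\gamma_T)$, adding $\gamma_T$ over $\mbbZ/2\mbbZ$ flips the $e$-coefficient, which is exactly what separates $S_1$ from $S_2$ (the paper just writes this out in explicit coordinates, checking both directions $S_1\to S_2$ and $S_2\to S_1$ directly, where you track the single coefficient of $e$ and note the flip works in both directions). One tiny caution: your remark that the involution property alone upgrades $f(S_1)\subseteq S_2$ to $f(S_2)\subseteq S_1$ is not valid in isolation, but your coefficient-flip observation already establishes both containments explicitly, so the argument stands.
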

\begin{proof}
Fist note that $\supp(\gamma_T)\neq\emptyset$ by Lemma~\ref{L:bipartite}. Without loss of generality, assume $E^c(T)=\{e_1,\cdots,e_g\}$, $\gamma_T = e_1+\cdots+e_k$ with some $1\leq k\leq g$, $S_1 = M_2(\{e_2,\cdots, e_g\})$ and $S_2 = M_2(\{e_1,\cdots, e_g\})\setminus M_2(\{e_2,\cdots, e_g\})$. Then for any $\gamma = c_2\cdot e_2+\cdots+c_g\cdot e_g\in S_1$ , $\gamma+\gamma_T=e_1+(c_2+1)\cdot e_2+\cdots +(c_k+1)\cdot e_k+c_{k+1}\cdot e_{k+1}+\cdots +c_g\cdot e_g\in S_2$. Similarly, for any $\gamma = e_1+c_2\cdot e_2+\cdots+c_g\cdot e_g\in S_2$, $\gamma+\gamma_T=(c_2+1)\cdot e_2+\cdots+(c_k+1)\cdot e_k+c_{k+1}\cdot e_{k+1}+\cdots +c_g\cdot e_g\in S_1$.
\end{proof}

\begin{example} \label{E:red_anti}
Reconsider Example~\ref{E:eulerian} for the Eulerian graph $G_2$  illustrated in Figure~\ref{F:example}(b). The maximal bipartite subgraph of $G_2$ containing $T$ has edges . $e_0$, $e_1$, $e_2$ and $e_3$. Then the canonical element in $M_2(E^c(T) )=M_2(\{e_2,e_4,e_5,e_6,e_7\})$ is $\gamma_T=e_4 + e_5+ e_6 +e _7$, and $\supp(\gamma_T)=\{e_4,e_5,e_6,e_7\}$. Note that $\supp(\gamma_T)=4$ and $|E(G_2)|=8$, with the same parity  as Lemma~\ref{L:parity} shows.  Now we choose $S_1=M_2(\{e_2,e_4,e_5,e_6\})$ and $S_2=M_2(E^c(T) )\setminus S_1$. Then $\{S_1,S_2\}$ is a desirable partition of $M_2(E^c(T) )$ in the statement of Theorem~\ref{T:red_anti}, i.e., whenever $\gamma\in S_1$, $\gamma+\gamma_T\in S_2$. The following table shows all elements $\gamma$ in  $S_1$, their correspondence $\gamma+\gamma_T$ in $S_2$, and the values of 
 $(-1)^{\sigma(\gamma)}=  (-1)^{\sigma(\gamma+\gamma_T)}$, $\trace\left(W_{1,\gamma}^8\right)=(-1)^8\trace\left(W_{1,\gamma+\gamma_T}^8\right)=\trace\left(W_{1,\gamma+\gamma_T}^8\right)$ and $\trace\left(A_\gamma^8\right)=(-1)^8\trace\left(A_\gamma^8\right)=\trace\left(A_\gamma^8\right)$. 

\centering
\begin{tabular}{|c|c|c|c|c|}
\hline
$\gamma\in S_1$& $\gamma+\gamma_T\in S_2$  & $(-1)^{\sigma(\gamma)}$ & $\trace\left(W_{1,\gamma}^8\right)$ & $\trace\left(A_\gamma^8\right)$  \\ \hline 
0 & $e_4 + e_5+ e_6 +e _7$ & 1 & 6800 & 66048 \\ \hline 
$e_2$ & $e_2+e_4 + e_5+ e_6 +e _7$ & -1 & -112 & 12288 \\ \hline 
$e_4$ & $e_5+ e_6 +e _7$ & -1 & -624 & 20032 \\ \hline 
$e_2+e_4$ & $e_2+e_5+ e_6 +e _7$ & 1 & 144 & 6208 \\ \hline 
$e_5$ & $e_4+ e_6 +e _7$ & -1 & -112 & 12288 \\ \hline 
$e_2+e_5$ & $e_2+e_4+ e_6 +e _7$ & 1 & 400 & 512 \\ \hline 
$e_4+e_5$ & $e_6 +e _7$ & 1 & 144 & 6208 \\ \hline 
$e_2+e_4+e_5$ & $e_2+ e_6 +e _7$ & -1 & -624 & 64 \\ \hline 
$e_6$ & $e_4+e_5 +e _7$ & -1 & -112 & 12288 \\ \hline 
$e_2+e_6$ & $e_2+e_4+ e_5 +e _7$ & 1 & 400 & 512 \\ \hline 
$e_4+e_6$ & $e_5+ e _7$ & 1 & 144 & 6208 \\ \hline 
$e_2+e_4+e_6$ & $e_2+e_5 +e _7$ & -1 & -624 & 64 \\ \hline 
$e_5+e_6$ & $e_4+ e _7$ & 1 & 144 & 8704 \\ \hline 
$e_2+e_5+e_6$ & $e_2+e_4 +e _7$ & -1 & -112 & 12288 \\ \hline 
$e_4+e_5+e_6$ & $e _7$ & -1 & -624 & 20032 \\ \hline 
$e_2+e_4+e_5+e_6$ & $e_2+ e _7$ & 1 & 144 & 6208 \\ \hline 
\end{tabular}
\end{example}

\subsection{Reduction based on graph automorphisms}
\begin{definition}
An \emph{automorphism} $\tau$ of $G$ is composed of a permutation of $V(G)$ and a permutation of $E(G)$ (both permutations denoted by $\tau$ by an abuse of notations) such that the adjacecny relations are preserved, i.e., for all $v\in V(G)$ and $e\in E(G)$, $v$ is adjacent to $e$ if and only if $\tau(v)$ is adjacent to $\tau(e)$. All automorhisms of $G$ form a group which is called the \emph{automorphism group} of $G$, denoted by $\Aut(G)$. 
\end{definition}

\begin{remark} \label{R:auto}
The actions of an automorphism $\tau$ of $G$ on $V(G)$ and $E(G)$ can be naturally extended to actions on $\vE(G)$ and $C_1(G,R)$ as follows: 
\begin{enumerate}[(i)]
\item For each pair of oppositely oriented edges $\ve,\ve^{-1}\in \vE(G)$ with the same underlying edge $e\in E(G)$, $\tau(\ve)$ and $\tau(\ve^{-1})$  are the oppositely oriented edges with the same underlying edge $\tau(e)$ such that their initial vertices are $\tau(\ve(0))$ and $\tau(\ve^{-1}(0))$ respectively. Clearly such an action on $\vE(G)$ is a permutation. However, it does not preserve orientation in general. That is to say a positively oriented edge $\ve\in \vE_\mfrako(G)$ with respect to an orientation $\mfrako$ can possibly be mapped to a negatively oriented edge $\tau(\ve)\in  \vE(G)\setminus \vE_{\mfrako}(G)$  and vice versa. In this case, we say the action of \emph{$\tau$ changes the orientation of $\ve$} with respect to $\mfrako$; otherwise, we say \emph{$\tau$ preserves the orientation of $\ve$} with respect to $\mfrako$.
\item For each $\gamma=\sum_{\ve\in \vE_\mfrako(G)}c_{\ve}\cdot \ve \in C_1(G,R)$, the action of $\tau$ on $\gamma$ is defined to be $\tau(\gamma)=\sum_{\ve\in \vE_\mfrako(G)}c_{\ve}\cdot \tau(\ve)$. Alternatively, we can write $\tau(\gamma) = \sum_{\ve'\in \vE_\mfrako(G)}b_{\ve'}\cdot \ve'$ where for each $\ve'\in \vE_\mfrako(G)$, $b_{\ve'}=c_{\ve}$ if $\ve'=\tau^{-1}(\ve)$ (i.e., $\tau$ preserves the direction of $\ve$) and $b_{\ve'}=-c_{\ve}$ if $\ve'=(\tau^{-1}(\ve))^{-1}$ (i.e., $\tau$ changes the direction of $\ve$). One can also easily verify that such an action of $\tau$ on $C_1(G,R)$ is also an automorphism of $C_1(G,R)$. 
\end{enumerate}
\end{remark}

The adjacency matrix is a graph isomorphic invariant (more precisely, for labelled graphs). The following lemma shows that the conjugacy of twisted vertex and edge adjacency matrices is invariant under automorphisms. 
\begin{lemma} \label{L:auto}
Let $\tau$ be an automorphism of $G$. For each $\gamma\in C_1(G,\mbbZ/t\mbbZ)$, $A_\gamma$ is similar to $A_{\tau(\gamma)}$ and $W_{1,\gamma}$ is similar to $W_{1,\tau(\gamma)}$.
\end{lemma}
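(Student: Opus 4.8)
The plan is to exhibit explicit conjugating permutation matrices---a vertex-permutation matrix $P_\tau$ for the vertex case and an oriented-edge permutation matrix $Q_\tau$ for the edge case---and to verify entry by entry that $A_{\tau(\gamma)} = P_\tau A_\gamma P_\tau^{-1}$ and $W_{1,\tau(\gamma)} = Q_\tau W_{1,\gamma} Q_\tau^{-1}$. The entire argument reduces to a single invariance property of the twisting character, namely that $\chi_{\tau(\gamma)}(\tau(\ve)) = \chi_\gamma(\ve)$ for every oriented edge $\ve\in\vE(G)$, which I would establish first.

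First I would prove this character invariance. Since $\chi_\gamma(\ve) = \epsilon_t^{\langle\gamma,\ve\rangle}$ by Definition~\ref{D:twist}, it suffices to show $\langle\tau(\gamma),\tau(\ve)\rangle = \langle\gamma,\ve\rangle$ in $\mbbZ/t\mbbZ$. By Remark~\ref{R:auto}(ii), the action of $\tau$ on $C_1(G,\mbbZ/t\mbbZ)$ is a signed permutation of the basis $\vE_\mfrako(G)$: each $\ve_k$ is sent to $s_k\cdot\ve_{\pi(k)}$ with $s_k\in\{1,-1\}$ and $\pi$ a permutation of $\{1,\dots,m\}$, where the sign records whether $\tau$ preserves or reverses the orientation of $\ve_k$. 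Because $\langle\cdot,\cdot\rangle$ is the standard bilinear form in this basis and $s_k^2=1$, such a signed permutation is orthogonal and hence preserves the pairing. Concretely, writing $\ve=\ve_{k_0}$, the sign factors $s_{k_0}$ picked up by $\tau(\gamma)$ and by $\tau(\ve)$ on the common edge $\ve_{\pi(k_0)}$ multiply to $s_{k_0}^2 = 1$, giving the claimed identity.

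Next, for the vertex case I would let $P_\tau$ be the permutation matrix of the induced vertex permutation and compute the $(\tau(i),\tau(j))$-entry of $A_{\tau(\gamma)}$, where $\tau(i)$ denotes the index of $\tau(v_i)$. Because $\tau$ is a bijection of $\vE(G)$ satisfying $\tau(\ve)(0)=\tau(\ve(0))$ and $\tau(\ve)(1)=\tau(\ve(1))$, the oriented edges from $v_{\tau(i)}$ to $v_{\tau(j)}$ are exactly the $\tau$-images of those from $v_i$ to $v_j$. Summing the twisting characters over these edges and applying the invariance from the previous step yields $(A_{\tau(\gamma)})_{\tau(i),\tau(j)} = (A_\gamma)_{ij}$, which is precisely the conjugation identity $A_{\tau(\gamma)} = P_\tau A_\gamma P_\tau^{-1}$, so $A_\gamma$ and $A_{\tau(\gamma)}$ are similar.

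For the edge case I would repeat the argument with $Q_\tau$ the permutation matrix of the induced permutation of $\vE(G)$. The two observations needed are that the feeding relation is preserved by $\tau$ (indeed $\ve_i$ feeds into $\ve_j$ iff $\tau(\ve_i)(1)=\tau(\ve_j)(0)$ and $\tau(\ve_j)\neq\tau(\ve_i)^{-1}$, using $\tau(\ve_i^{-1})=\tau(\ve_i)^{-1}$), and that each nonzero entry of $W_{1,\gamma}$ equals $\chi_\gamma(\ve_i)$, as one reads off by unwinding Definition~\ref{D:twist}(ii). Applying the character invariance once more gives $(W_{1,\tau(\gamma)})_{\tau(\ve_i),\tau(\ve_j)} = (W_{1,\gamma})_{\ve_i,\ve_j}$, i.e. $W_{1,\tau(\gamma)} = Q_\tau W_{1,\gamma} Q_\tau^{-1}$. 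The only genuinely delicate point in the whole proof is the orientation bookkeeping in the first step: one must confirm that the sign changes forced on $\tau(\gamma)$ when $\tau$ reverses orientations are exactly cancelled by the matching sign changes on $\tau(\ve)$, leaving the character unchanged. Everything else is a routine index chase.
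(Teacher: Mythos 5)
Your proof is correct and takes essentially the same route as the paper's: both hinge on the invariance $\langle\tau(\gamma),\tau(\ve)\rangle=\langle\gamma,\ve\rangle$ and then check entrywise that the vertex and oriented-edge permutations carry $A_\gamma$ to $A_{\tau(\gamma)}$ and $W_{1,\gamma}$ to $W_{1,\tau(\gamma)}$, i.e.\ conjugation by permutation matrices. Your explicit signed-permutation bookkeeping (the $s_{k_0}^2=1$ cancellation) merely spells out a step the paper asserts in one line from $\tau(\gamma)=\sum_k c_k\cdot\tau(\ve_k)$, so it is the same argument, just slightly more detailed.
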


\begin{proof}
Let $\mfrako$ be an orientation of $G$. 
Index the vertices and oriented edges of $G$ as $V(G)=\{v_1,\cdots,v_n\}$, $\vE_\mfrako(G)=\{\ve_1,\cdots,\ve_m\}$ and $\vE(G)=\{\ve_1,\cdots,\ve_m,\ve_{m+1}=\ve_1^{-1},\cdots,\ve_{2m}=\ve_m^{-1}\}$.  Let $\gamma=c_1\cdot \ve_1+\cdots+c_m\cdot \ve_m\in C_1(G,\mbbZ/t\mbbZ)$. Then $\tau(\gamma)=c_1\cdot \tau(\ve_1)+\cdots+c_m\cdot \tau(\ve_m)$. This also means $\langle \gamma,\ve\rangle = \langle \tau(\gamma),\tau(\ve)\rangle$ for each $\ve\in \vE(G)$. 

By definition, the twist vertex adjacency matrices are $A_\gamma=(a_{ij})_{n\times n}$ with $a_{ij}=\sum_{\ve\in \vE(G),\ve(0)=v_i,\ve(1)=v_j} \epsilon_t^{\langle\gamma,\ve\rangle}$, and $A_{\tau(\gamma)}=(a'_{ij})_{n\times n}$ with $a'_{ij}=\sum_{\ve\in \vE(G),\ve(0)=v_i,\ve(1)=v_j} \epsilon_t^{\langle\tau(\gamma),\ve\rangle}$. 

Let $v_{i'}=\tau(v_i)$ and $v_{j'}=\tau(v_j)$. Then since the action of $\tau$  on $\vE(G)$ is a permutation respecting the incidence relations and the identity $\langle \gamma,\ve\rangle = \langle \tau(\gamma),\tau(\ve)\rangle$ holds for all $\ve\in \vE(G)$, we have
\begin{align*}
a'_{i'j'}&=\sum_{\ve\in \vE(G),\ve(0)=\tau(v_i),\ve(1)=\tau(v_j)} \epsilon_t^{\langle\tau(\gamma),\ve\rangle} \\
&=\sum_{\ve\in \vE(G),\tau(\ve)(0)=\tau(v_i),\tau(\ve)(1)=\tau(v_j)} \epsilon_t^{\langle\tau(\gamma),\tau(\ve)\rangle} \\
&=\sum_{\ve\in \vE(G),\ve(0)=v_i,\ve(1)=v_j} \epsilon_t^{\langle\gamma,\ve\rangle} = a_{ij}. 
\end{align*}
This means that the matrix $A_{\tau(\gamma)}$ can be derived from $A_\gamma$ by a permutation of rows and columns with respect to the permutation of $\tau$ on $V(G)$. Hence $A_\gamma$ is similar to $A_{\tau(\gamma)}$. 

For the twisted edge adjacency matrices, $W_{1,\gamma}=(w_{ij})_{2m\times 2m}$ (respectively $W_{1,\tau(\gamma)}=(w'_{ij})_{2m\times 2m}$) such that $w_{ij} =\epsilon_t^{\langle\gamma,\ve_i\rangle}$ (respectively $w'_{ij} =\epsilon_t^{\langle\tau(\gamma),\ve_i\rangle}$) if $\ve_i$ feeds into $\ve_j$, and $w_{ij}=0$ ($w'_{ij}=0$ ) otherwise.

Let $\ve_{i'}=\tau(\ve_i)$ and $\ve_{j'}=\tau(\ve_j)$. $\tau$ being an isomorphism guarantees that $\ve_{i'}$ feeds to $\ve_{j'}$ if and only if $\ve_i$ feeds into $\ve_j$. Then $w'_{i'j'}=\epsilon_t^{\langle\tau(\gamma),\ve_{i'}\rangle}=\epsilon_t^{\langle\tau(\gamma),\tau(\ve_{i})\rangle} = \epsilon_t^{\langle\gamma,\ve_i\rangle}=w_{ij}$ if $\ve_i$ feeds into $\ve_j$, and $w'_{i'j'}=w_{ij}=0$ otherwise. Consequently, the matrix $W_{1,\tau(\gamma)}$ can be derived from $W_{1,\gamma}$ by a permutation of rows and columns with respect to the permutation of $\tau$ on $\vE(G)$. Hence $W_{1,\gamma}$ is similar to $W_{1,\tau(\gamma)}$. 
\end{proof}

\begin{example}
Consider the Eulerian graph $G_2$  in Figure~\ref{F:example}(b). Let $\tau$ be an automorphism of $G_2$ defined by  $v_0\leftrightarrow v_2$, $v_1\leftrightarrow v_3$, $e_1\leftrightarrow e_2$, $e_5\leftrightarrow e_6$, $e_0\leftrightarrow e_7$, $e_3\leftrightarrow e_4$, where the sign ``$\leftrightarrow$'' means ``transposition''. Then $\tau$ acts on the oriented edges as follows: $\tau(\ve_0)=\ve_7^{-1}$, $\tau(\ve_1)=\ve_2$, $\tau(\ve_2)=\ve_1$, $\tau(\ve_3)=\ve_4^{-1}$, $\tau(\ve_4)=\ve_3^{-1}$, $\tau(\ve_5)=\ve_6^{-1}$, $\tau(\ve_6)=\ve_5^{-1}$, $\tau(\ve_7)=\ve_0^{-1}$. Now consider $\gamma = 1\cdot \ve_1+ 2\cdot \ve_2 + 3\cdot \ve_3 +2\cdot \ve_5 +1\cdot \ve_6+3\cdot \ve_7 \in C_1(G_2,\mbbZ/4\mbbZ)$. Then 
\begin{align*}
\tau(\gamma)&= \tau(1\cdot \ve_1+ 2\cdot \ve_2 + 3\cdot \ve_3 +2\cdot \ve_5 +1\cdot \ve_6+3\cdot \ve_7) \\
&=1\cdot \tau(\ve_1)+ 2\cdot \tau(\ve_2) + 3\cdot \tau(\ve_3) +2\cdot \tau(\ve_5) +1\cdot \tau(\ve_6)+3\cdot \tau(\ve_7) \\
&=1\cdot \ve_2 + 2\cdot \ve_1-3\cdot \ve_4 -2\cdot \ve_6-1\cdot \ve_5 -3\cdot \ve_0 \\
& = 1\cdot \ve_0+2\cdot \ve_1 +1\cdot \ve_2 +1\cdot \ve_4 +3\cdot \ve_5+2\cdot \ve_6.
\end{align*}
With respect to the indexing of vertices $v_0,\cdots,v_3$ and oriented edges $\ve_0,\cdots,\ve_7,\ve_8=\ve_0^{-1},\cdots,\ve_{15}=\ve_7^{-1}$, we may compute the twisted vertex adjacency matrices $A_\gamma$ and $A_{\tau(\gamma)}$,  and the twisted edge adjacency matrices $W_{1,\gamma}$ and $W_{1,\tau(\gamma)}$ as follows: 
$A_\gamma=\begin{bmatrix}
    0    &  1  & - 1-i  &   -i \\
    1    &  0    &  1 & - 1-i \\
 - 1+i &   1   &   0   &  -i\\
    i &  -1+i &     i &      0
\end{bmatrix}$
is similar to  \\
$A_{\tau(\gamma)}=\begin{bmatrix}
    0    &  -i & -1+i  &   1 \\
    i    &  0    &  i & -1+i \\
 - 1-i &   -i   &   0   &  1\\
    1 &  -1-i &     1 &      0
\end{bmatrix}$, and 
$W_{1,\gamma}= \\
\begin{bmatrix}
 0 & 0&  1&  1&  0&  0&  0&  0&  0&  1&  0&  0&  0&  0&  0&  0 \\
 0 & 0 & i & i & 0 & 0 & 0 & 0&  i&  0&  0&  0&  0&  0&  0&  0\\
0 &-1&  0&  0&  0&  0&  0& -1&  0&  0&  0&  0& -1&  0&  0&  0\\
 0 & 0 & 0 & 0 & 0& -i& -i&  0&  0&  0&  0&  0&  0&  0&  0& -i \\
 0 & 1 & 0 & 0 & 0 & 0 & 0 & 1 & 0 & 0&  1&  0&  0&  0&  0&  0 \\
-1 & 0 & 0 & 0& -1&  0&  0&  0&  0&  0&  0&  0&  0&  0& -1 & 0 \\
 i  &0 & 0 & 0 & i & 0&  0&  0&  0&  0&  0&  0&  0&  i&  0&  0 \\
 0 & 0&  0&  0&  0& -i& -i&  0 & 0 & 0 & 0 &-i&  0&  0&  0&  0 \\
 0 & 0&  0&  0&  1&  0&  0&  0&  0&  0&  0&  0&  0&  1&  1&  0 \\
 0 & 0 & 0 & 0 & 0 & 0 & 0 &-i & 0&  0& -i&  0& -i & 0&  0 & 0 \\
 0  &0 & 0& -1&  0&  0&  0&  0& -1& -1&  0 & 0 & 0 & 0 & 0 & 0 \\
 0 & 0&  i&  0 & 0&  0&  0&  0&  i&  i&  0&  0&  0&  0&  0&  0 \\
 1 & 0 & 0 & 0 & 0 & 0 & 0 & 0 & 0 & 0 & 0 & 0 & 0&  1&  1&  0 \\
 0 & 0 & 0 & 0 & 0 & 0 &-1 & 0 & 0 & 0 & 0& -1&  0&  0&  0& -1 \\
 0 & 0 & 0 & 0 & 0 &-i & 0 & 0 & 0 & 0 & 0 &-i & 0 & 0 & 0& -i \\
 0 & i&  0 & 0 & 0 & 0 & 0 & 0 & 0 & 0 & i & 0 & i & 0 & 0 & 0
\end{bmatrix}$ is similar to 
$W_{1,\tau(\gamma)} = \\
\begin{bmatrix}
 0 & 0&  i&  i&  0&  0&  0&  0&  0&  i&  0&  0&  0&  0&  0&  0 \\
 0 & 0 & -1 & -1 & 0 & 0 & 0 & 0&  -1&  0&  0&  0&  0&  0&  0&  0\\
0 &i&  0&  0&  0&  0&  0& i&  0&  0&  0&  0& i&  0&  0&  0\\
 0 & 0 & 0 & 0 & 0& 1& 1&  0&  0&  0&  0&  0&  0&  0&  0& 1 \\
 0 & i & 0 & 0 & 0 & 0 & 0 & i& 0 & 0& i&  0&  0&  0&  0&  0 \\
-i & 0 & 0 & 0& -i &  0&  0&  0&  0&  0&  0&  0&  0&  0& -i & 0 \\
 -1  &0 & 0 & 0 & -1 & 0&  0&  0&  0&  0&  0&  0&  0&  -1&  0&  0 \\
 0 & 0&  0&  0&  0& 1& 1&  0 & 0 & 0 & 0 &1&  0&  0&  0&  0 \\
 0 & 0&  0&  0&  -i&  0&  0&  0&  0&  0&  0&  0&  0&  -i&  -i&  0 \\
 0 & 0 & 0 & 0 & 0 & 0 & 0 &-1 & 0&  0& -1&  0& -1 & 0&  0 & 0 \\
 0  &0 & 0& -i&  0&  0&  0&  0& -i& -i&  0 & 0 & 0 & 0 & 0 & 0 \\
 0 & 0&  1&  0 & 0&  0&  0&  0&  1&  1&  0&  0&  0&  0&  0&  0 \\
 -i  & 0 & 0 & 0 & 0 & 0 & 0 & 0 & 0 & 0 & 0 & 0 & 0&  -i&  -i &  0 \\
 0 & 0 & 0 & 0 & 0 & 0 &i & 0 & 0 & 0 & 0& i&  0&  0&  0& i \\
 0 & 0 & 0 & 0 & 0 &-1 & 0 & 0 & 0 & 0 & 0 &-1 & 0 & 0 & 0& -1\\
 0 & 1&  0 & 0 & 0 & 0 & 0 & 0 & 0 & 0 & 1& 0 & 1c& 0 & 0 & 0
\end{bmatrix}$.

\qed
\end{example}

Now let $H$ be any subgroup of $\Aut(G)$. Since an automorphism of $G$ naturally induces an automorphism of $C_1(G,\mbbZ/t\mbbZ)$ (Remark~\ref{R:auto}(ii)). For a subset $S$ of $C_1(G,\mbbZ/t\mbbZ)$, denote by $S/H$ the partition of $S$ induced by $H$ whose elements are the orbit of $\gamma$ restricted to $S$ under the action of all  $\tau\in H$, i.e., $S/H:=\{[\gamma]|\gamma\in S\}$ where $[\gamma]:=\{\tau(\gamma)\in S|\tau \in H\}$. Note that equivalently, $S/H=\{[\gamma]\cap S\neq \emptyset|[\gamma]\in C_1(G,\mbbZ/t\mbbZ)/H\}$.

\begin{theorem}\label{T:red_counting}
For a subgroup $H$ of $\Aut(G)$, a subset $\vF$ of $\vE_\mfrako(G)$ and $\alpha\in M_t(\vF)$,  we have
$$N_{t,\vF}(\alpha,l) = \frac{1}{t^{|\vF|}}\sum_{[\gamma]\in M_t(\vF)/H}\left(\sum_{\gamma'\in [\gamma]}\epsilon_t^{-\langle\gamma', \alpha\rangle}\right)\trace\left(W_{1,\gamma}^l\right)$$
and
$$ \widetilde{N}_{t,\vF}(\alpha,l)= \frac{1}{t^{|\vF|}}\sum_{[\gamma]\in M_t(\vF)/H}\left(\sum_{\gamma'\in [\gamma]}\epsilon_t^{-\langle\gamma', \alpha\rangle}\right)\trace\left(A_{\gamma}^l\right)$$
for any length $l$. If $G$ has genus $g$, then for any spanning tree $T$ of $G$, we have 
$$N_t(\alpha,l) = \frac{1}{t^g}\sum_{[\gamma]\in M_t(\vE_\mfrako^c(T))/H}\left(\sum_{\gamma'\in [\gamma]}\epsilon_t^{-\langle\gamma', \alpha\rangle}\right)\trace\left(W_{1,\gamma}^l\right)$$
and
$$ \widetilde{N}_t(\alpha,l)= \frac{1}{t^g}\sum_{[\gamma]\in M_t(\vE_\mfrako^c(T))/H}\left(\sum_{\gamma'\in [\gamma]}\epsilon_t^{-\langle\gamma', \alpha\rangle}\right)\trace\left(A_{\gamma}^l\right)$$
for any length $l$.
\end{theorem}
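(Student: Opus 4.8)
The plan is to obtain the theorem directly from Theorem~\ref{T:counting0} and Theorem~\ref{T:counting} by regrouping their sums along the $H$-orbits, the essential input being that the trace factors are constant on orbits. First I would note that, by Lemma~\ref{L:auto}, for every $\tau\in H$ the matrix $W_{1,\tau(\gamma)}$ is similar to $W_{1,\gamma}$ and $A_{\tau(\gamma)}$ is similar to $A_\gamma$; since similar matrices have equal traces of all powers, $\trace(W_{1,\gamma}^l)$ and $\trace(A_\gamma^l)$ depend only on the $H$-orbit of $\gamma$. Hence writing $\trace(W_{1,\gamma}^l)$ for a class $[\gamma]\in M_t(\vF)/H$ is unambiguous, and likewise for $A_\gamma$.

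Next I would observe that the classes $[\gamma]\in M_t(\vF)/H$ partition $M_t(\vF)$: each is the intersection with $M_t(\vF)$ of a full $H$-orbit in $C_1(G,\mbbZ/t\mbbZ)$, and intersecting a partition with a subset partitions that subset. Starting from $N_{t,\vF}(\alpha,l)=t^{-|\vF|}\sum_{\gamma'\in M_t(\vF)}\epsilon_t^{-\langle\gamma',\alpha\rangle}\trace(W_{1,\gamma'}^l)$ of Theorem~\ref{T:counting0}, I would rewrite the sum over $M_t(\vF)$ as the double sum over orbits $[\gamma]\in M_t(\vF)/H$ and members $\gamma'\in[\gamma]$. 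Within each orbit the factor $\trace(W_{1,\gamma'}^l)$ equals the common value $\trace(W_{1,\gamma}^l)$, so it factors out of the inner sum, leaving precisely $\sum_{\gamma'\in[\gamma]}\epsilon_t^{-\langle\gamma',\alpha\rangle}$. This gives the first displayed identity; the argument for $\widetilde{N}_{t,\vF}$ is identical with $A_\gamma$ replacing $W_{1,\gamma}$ and the $A$-part of Lemma~\ref{L:auto}.

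For the two formulas involving $N_t$ and $\widetilde{N}_t$, I would specialize $\vF=\vE_\mfrako^c(T)$ and invoke the identifications from the proof of Theorem~\ref{T:counting}: $N_t(\alpha,l)=N_{t,\vE_\mfrako^c(T)}(\alpha|_{\vE_\mfrako^c(T)},l)$, the analogous identity for $\widetilde{N}_t$, and $\langle\gamma',\alpha\rangle=\langle\gamma',\alpha|_{\vE_\mfrako^c(T)}\rangle$ for $\gamma'\in M_t(\vE_\mfrako^c(T))$. Substituting these into the already-established $\vF=\vE_\mfrako^c(T)$ case yields the last two displays.

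The computation is routine; the only point needing care is the bookkeeping of orbits when $H$ does not preserve the submodule $M_t(\vF)$. I would stress that a member $\gamma'\in[\gamma]\subseteq M_t(\vF)$ and the representative $\gamma$ always lie in a single full $H$-orbit of $C_1(G,\mbbZ/t\mbbZ)$, so Lemma~\ref{L:auto} (and hence orbit-invariance of the trace) applies even though intermediate images $\tau(\gamma)$ may leave $M_t(\vF)$. It is also worth emphasizing that the inner character sum cannot be simplified further in general, since $\langle\gamma',\alpha\rangle$ genuinely varies over the orbit; only the trace factor is orbit-invariant, which is exactly what yields the computational reduction.
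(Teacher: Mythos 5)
Your proof is correct and takes essentially the same route as the paper, which likewise invokes Lemma~\ref{L:auto} to conclude that $\trace\left(W_{1,\gamma}^l\right)$ and $\trace\left(A_\gamma^l\right)$ are constant on $H$-orbits and then obtains the result as a regrouping of the sums in Theorems~\ref{T:counting0} and~\ref{T:counting}. Your explicit remark that each restricted class $[\gamma]\subseteq M_t(\vF)$ sits inside a single full $H$-orbit of $C_1(G,\mbbZ/t\mbbZ)$, so trace-invariance applies even when $H$ does not preserve $M_t(\vF)$, carefully fills in a detail the paper's brief proof leaves implicit.
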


\begin{proof}
By Lemma~\ref{L:auto}, for $[\gamma]\in \mathcal{O}^H_t(G)$ and any $\gamma'\in \gamma$, we must have $\trace\left(W_{1,\gamma}^l\right)=\trace\left(W_{1,\gamma'}^l\right)$ and  $\trace\left(A_{\gamma}^l\right)=\trace\left(A_{\gamma'}^l\right)$. Hence the theorem follows from a reformulation of Theorem~\ref{T:counting0} and Theorem~\ref{T:counting}. 
\end{proof}

\begin{theorem}\label{T:red_counting_eulerian}
Let $G$ be an Eulerian graph of genus $g$ with $m$ edges, $T$ a spanning tree of $G$, $\mfrako$ an orientation on $G$ and $H$ a subgroup of $\Aut(G)$. We have
\begin{align*}
\ec(G) &=  \frac{1}{m\cdot 2^g}\sum_{[\gamma]\in M_2(E^c(T))/H}|[\gamma]|\cdot(-1)^{\sigma(\gamma)}\trace\left(W_{1,\gamma}^m\right) \\
&= \frac{1}{m\cdot 2^g}\sum_{[\gamma]\in M_2(E^c(T))/H}|[\gamma]|\cdot(-1)^{\sigma(\gamma)}\trace\left(A_{\gamma}^m\right)
 \end{align*}
 and 
 \begin{align*}
\ec(G,\mfrako) &=  \frac{1}{m\cdot t^g}\sum_{[\gamma]\in M_t(\vE_\mfrako^c(T))/H}|[\gamma]| \cdot\epsilon_t^{-\sigma(\gamma)}\trace\left(W_{1,\gamma}^m\right) \\
&= \frac{1}{m\cdot t^g}\sum_{[\gamma]\in M_t(\vE_\mfrako^c(T))/H}|[\gamma]|\cdot\epsilon_t^{-\sigma(\gamma)}\trace\left(A_{\gamma}^m\right)
 \end{align*}
 for all $t\geq 3$. 
\end{theorem}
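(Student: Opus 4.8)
The plan is to obtain all four identities by specializing the orbit-refined formulas of Theorem~\ref{T:red_counting} to the Eulerian homology class, exactly as Theorem~\ref{T:main} and Theorem~\ref{T:directed_eulerian} were deduced from Theorem~\ref{T:counting}. For the undirected pair I set $t=2$ and $\alpha=\mathbbm{1}$: Lemma~\ref{L:eulerian} gives $\ec(G)=N_2(\mathbbm{1},m)/m=\widetilde{N}_2(\mathbbm{1},m)/m$, while Proposition~\ref{P:eulerian} identifies $\mathbbm{1}=\rho_T^{-1}(\mathbbm{1}_T)\in H_1(G,\mbbZ/2\mbbZ)$. For the directed pair I set $t\ge 3$ and $\alpha=\mathbbm{1}_\mfrako$, with $\ec(G,\mfrako)=N_t(\mathbbm{1}_\mfrako,m)/m=\widetilde{N}_t(\mathbbm{1}_\mfrako,m)/m$ and $\mathbbm{1}_\mfrako=\rho_T^{-1}(\mathbbm{1}_{\mfrako,T})$ as in the proof of Theorem~\ref{T:directed_eulerian}. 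Feeding $\alpha$ into Theorem~\ref{T:red_counting} and dividing by $m$ produces in each case a sum over $[\gamma]\in M_t(\vE_\mfrako^c(T))/H$ of $\left(\sum_{\gamma'\in[\gamma]}\epsilon_t^{-\sigma(\gamma')}\right)\trace\left(W_{1,\gamma}^m\right)$ and its $A_\gamma$-analogue, where I use $\langle\gamma',\alpha\rangle=\langle\gamma',\alpha|_{\vE_\mfrako^c(T)}\rangle=\sigma(\gamma')$ for $\gamma'\in M_t(\vE_\mfrako^c(T))$, as in the proof of Theorem~\ref{T:counting}.

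What remains is to collapse each inner character sum $\sum_{\gamma'\in[\gamma]}\epsilon_t^{-\sigma(\gamma')}$ into $|[\gamma]|\,\epsilon_t^{-\sigma(\gamma)}$. Since $\trace\left(W_{1,\gamma}^m\right)$ and $\trace\left(A_\gamma^m\right)$ are already constant along each $H$-orbit by Lemma~\ref{L:auto}, this collapse is the entire content of the statement. It is equivalent to the key lemma that $\gamma\mapsto\epsilon_t^{-\sigma(\gamma)}$ is constant along $H$-orbits in $M_t(\vE_\mfrako^c(T))$, that is, $\sigma(\tau(\gamma))\equiv\sigma(\gamma)\pmod t$ whenever $\tau\in H$ and $\tau(\gamma)\in M_t(\vE_\mfrako^c(T))$. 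The route I would take is to write $\sigma(\beta)=\langle\beta,\alpha\rangle$ for $\beta\in M_t(\vE_\mfrako^c(T))$ and to use that the $\tau$-action on $C_1(G,\mbbZ/t\mbbZ)$ is a signed permutation of the $\vE_\mfrako(G)$-basis (Remark~\ref{R:auto}), hence an isometry of $\langle\cdot,\cdot\rangle$. Then $\sigma(\tau(\gamma))=\langle\tau(\gamma),\alpha\rangle=\langle\gamma,\tau^{-1}(\alpha)\rangle$, so the lemma follows the instant one knows that the Eulerian indicator is $H$-fixed, $\tau^{-1}(\alpha)=\alpha$.

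For the undirected identities this fixedness is unconditional and holds for every $H\le\Aut(G)$: by Remark~\ref{R:caset2} the $\mbbZ/2\mbbZ$-action is orientation-blind, so $\tau$ permutes $E(G)$ with no sign twists and fixes the all-ones vector $\mathbbm{1}$. Hence $\sigma(\tau(\gamma))\equiv\sigma(\gamma)\pmod 2$ with no further hypothesis, and the first two formulas follow at once.

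I expect the directed case to be the main obstacle, and it is precisely here that orientation enters essentially. For $t\ge 3$ the sign twists in the $\tau$-action no longer disappear: applying $\tau$ to $\mathbbm{1}_\mfrako=\sum_{\ve\in\vE_\mfrako(G)}\ve$ gives $\tau(\mathbbm{1}_\mfrako)=\mathbbm{1}_{\tau(\mfrako)}$, which differs from $\mathbbm{1}_\mfrako$ by $-2\sum_{e\in D}\ve_e$, where $D\subseteq E(G)$ is the set of edges on which $\mfrako$ and $\tau(\mfrako)$ disagree. Since $2\not\equiv 0\pmod t$ for $t\ge 3$, this difference vanishes in $C_1(G,\mbbZ/t\mbbZ)$ only when $D=\emptyset$, so the needed identity $\tau^{-1}(\mathbbm{1}_\mfrako)=\mathbbm{1}_\mfrako$, and with it $\sigma(\tau(\gamma))\equiv\sigma(\gamma)\pmod t$, is available exactly when every $\tau\in H$ preserves $\mfrako$; under that condition $\epsilon_t^{-\sigma(\gamma)}$ is a genuine orbit invariant and the last two formulas drop out. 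Controlling these orientation-reversal twists for an arbitrary $H\le\Aut(G)$ is the delicate point, since when $D\neq\emptyset$ the value $\epsilon_t^{-\sigma(\gamma)}$ is no longer orbit-constant and must be replaced by the un-collapsed character sum $\sum_{\gamma'\in[\gamma]}\epsilon_t^{-\sigma(\gamma')}$ supplied directly by Theorem~\ref{T:red_counting}; the closed form with $|[\gamma]|\,\epsilon_t^{-\sigma(\gamma)}$ is what one recovers on the orientation-preserving part of $H$.
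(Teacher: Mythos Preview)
Your approach mirrors the paper's: reduce to Theorem~\ref{T:red_counting} applied with $\alpha=\mathbbm{1}$ (for $t=2$) or $\alpha=\mathbbm{1}_\mfrako$ (for $t\ge 3$), then collapse each character sum $\sum_{\gamma'\in[\gamma]}\epsilon_t^{-\langle\gamma',\alpha\rangle}$ to $|[\gamma]|\,\epsilon_t^{-\sigma(\gamma)}$ by showing $\sigma$ is $H$-orbit constant. The paper's proof is the single line
\[
\epsilon_t^{-\sigma(\gamma)}=\epsilon_t^{-\langle\gamma,\mathbbm{1}\rangle}=\epsilon_t^{-\langle\tau(\gamma),\mathbbm{1}\rangle}=\epsilon_t^{-\sigma(\tau(\gamma))},
\]
after which it invokes Theorems~\ref{T:main}, \ref{T:directed_eulerian} and~\ref{T:red_counting}. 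Your undirected argument is exactly this line, fully justified: $\tau$ acts as a signed permutation on $C_1(G,\mbbZ/2\mbbZ)$, signs vanish mod~$2$, so $\tau(\mathbbm{1})=\mathbbm{1}$ and the middle equality holds. That part is complete.

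For $t\ge 3$ you have not left a gap in your own reasoning; you have located a genuine issue in the \emph{statement} as written. The middle equality above requires $\tau(\mathbbm{1}_\mfrako)=\mathbbm{1}_\mfrako$, i.e.\ that $\tau$ preserve $\mfrako$, and the paper imposes no such hypothesis on $H$. Your counteranalysis is correct: if some $\tau\in H$ reverses an edge, then $\sigma(\tau(\gamma))\not\equiv\sigma(\gamma)\pmod t$ in general, so $\epsilon_t^{-\sigma(\gamma)}$ fails to be a function on $M_t(\vE_\mfrako^c(T))/H$ and the displayed formula is not even well defined. (A minimal instance: two vertices joined by four edges with an Eulerian orientation, $\tau$ fixing the vertices and swapping an incoming edge with an outgoing one.) Your proposed fix---restrict to the orientation-preserving subgroup of $H$, or else retain the uncollapsed character sums from Theorem~\ref{T:red_counting}---is the right repair. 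So your proposal is correct; the ``obstacle'' you flag is not a defect of your argument but an omission in the paper.
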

\begin{proof}
For $t\geq 2$, $\gamma\in C_1(G,\mbbZ/t\mbbZ)$ and $\tau\in\Aut(G)$, $\epsilon_t^{-\sigma(\gamma)}=\epsilon_t^{-\langle\gamma, \mathbbm{1}\rangle}=\epsilon_t^{-\langle\tau(\gamma), \mathbbm{1}\rangle}=\epsilon_t^{-\sigma(\tau(\gamma))}$. Thus the theorem follows from Theorem~\ref{T:main}, Theorem~\ref{T:directed_eulerian},  and Theorem~\ref{T:red_counting}. 
\end{proof}

To combine the reductions based on spectral antisymmetry and graph automorphisms for a non-bipartite Eulerian graph $G$, we propose two approaches here: the first is to simply restrict the action of group automorphisms to only one of $S_1$ or $S_2$ in the statement of Theorem~\ref{T:red_anti}, and the second is to combine the effects of the permutations of $M_2(E^c(T) )$ induced by spectral antisymmetry and graph automorphisms. 
In particular, for the latter, consider a subgroup $H$ of $\Aut(G)$ and a subset $S$ of $M_2(E^c(T) )$, and let $S/\langle H,\gamma_T\rangle:=\{[\gamma]|\gamma\in S\}$ where $[\gamma]=\{\tau(\gamma)\in S |\tau\in \langle H,\gamma_T\rangle\}$ with $\langle H,\gamma_T\rangle$ being a permutation group of $C_1(G,\mbbZ/2\mbbZ)$ whose elements are compositions of actions induced by the automorphisms in $H$ and the translation permutation $\gamma\mapsto \gamma+\gamma_T$.

\begin{theorem}\label{T:red_counting_eulerian_combined}
Let $G$ be a non-bipartite Eulerian graph of genus $g$ with $m$ edges, $T$ a spanning tree of $G$,  $\gamma_T$ be the canonical element in $M_2(E^c(T) )$ and $H$ a subgroup of $\Aut(G)$. Let $S_1\sqcup S_2$ be any partition of $M_2(E^c(T) )$ such that for any  $\gamma\in M_2(E^c(T) )$, $\gamma\neq \gamma+\gamma_T$ and if $\gamma\in S_1$, then $\gamma+\gamma_T\in S_2$. Then we have
\begin{align*}
\ec(G) &= \frac{1}{m\cdot 2^{g-1}}\sum_{[\gamma]\in S_i/H}|[\gamma]|\cdot(-1)^{\sigma(\gamma)}\trace\left(W_{1,\gamma}^m\right) \\
&= \frac{1}{m\cdot 2^{g-1}}\sum_{[\gamma]\in S_i/H}|[\gamma]|\cdot(-1)^{\sigma(\gamma)}\trace\left(A_\gamma^m\right) \\
&=  \frac{1}{m\cdot 2^g}\sum_{[\gamma]\in M_2(E^c(T))/\langle H,\gamma_T\rangle}|[\gamma]|\cdot(-1)^{\sigma(\gamma)}\trace\left(W_{1,\gamma}^m\right) \\
&= \frac{1}{m\cdot 2^g}\sum_{[\gamma]\in M_2(E^c(T))/\langle H,\gamma_T\rangle}|[\gamma]|\cdot(-1)^{\sigma(\gamma)}\trace\left(A_{\gamma}^m\right) \\
&=  \frac{1}{m\cdot 2^{g-1}}\sum_{[\gamma]\in S_i/\langle H,\gamma_T\rangle}|[\gamma]|\cdot(-1)^{\sigma(\gamma)}\trace\left(W_{1,\gamma}^m\right) \\
&= \frac{1}{m\cdot 2^{g-1}}\sum_{[\gamma]\in S_i/\langle H,\gamma_T\rangle}|[\gamma]|\cdot(-1)^{\sigma(\gamma)}\trace\left(A_{\gamma}^m\right)
\end{align*}
for $i=1,2$. 
\end{theorem}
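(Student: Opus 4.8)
The plan is to reduce all six identities to one structural fact: writing
\[
f_W(\gamma) := (-1)^{\sigma(\gamma)}\trace\left(W_{1,\gamma}^m\right),\qquad
f_A(\gamma) := (-1)^{\sigma(\gamma)}\trace\left(A_\gamma^m\right)
\]
both $f_W$ and $f_A$ are invariant under the whole group $\langle H,\gamma_T\rangle$, that is, under every automorphism action $\gamma\mapsto\tau(\gamma)$ with $\tau\in H$ and under the translation $\gamma\mapsto\gamma+\gamma_T$. Granting this, each formula drops out by regrouping a sum already proved in an earlier result according to the relevant orbit partition and collecting equal terms, the weight $|[\gamma]|$ recording how many indices contribute the common value $f_W(\gamma)$ (resp.\ $f_A(\gamma)$); this is the same regrouping that passes from Theorem~\ref{T:main} to Theorem~\ref{T:red_counting_eulerian}.

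To make the invariance uniform I would first extend $\sigma$ to all of $C_1(G,\mbbZ/2\mbbZ)$ by $\sigma(\gamma):=\langle\gamma,\mathbbm{1}\rangle$, which agrees with the given definition on $M_2(E^c(T))$, is additive, and is $H$-invariant because $\tau(\mathbbm{1})=\mathbbm{1}$ and the pairing is preserved by $\tau$ when $t=2$. Invariance of $f_W$ and $f_A$ under $H$ then combines this with Lemma~\ref{L:auto}, since $W_{1,\gamma}\sim W_{1,\tau(\gamma)}$ and $A_\gamma\sim A_{\tau(\gamma)}$ force equal traces of $m$-th powers. Invariance under $\gamma_T$-translation is exactly the computation in the proof of Theorem~\ref{T:red_anti}: additivity gives $(-1)^{\sigma(\gamma+\gamma_T)}=(-1)^{\sigma(\gamma)}(-1)^{\sigma(\gamma_T)}$, Lemma~\ref{L:antisymmetry} gives $\trace(W_{1,\gamma+\gamma_T}^m)=(-1)^m\trace(W_{1,\gamma}^m)$ (and the analogue for $A$), and Lemma~\ref{L:parity} gives $\sigma(\gamma_T)=|\supp(\gamma_T)|\equiv m\pmod 2$, so $(-1)^{\sigma(\gamma_T)}(-1)^m=1$ and hence $f_W(\gamma+\gamma_T)=f_W(\gamma)$, $f_A(\gamma+\gamma_T)=f_A(\gamma)$.

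With the invariance in hand I would produce the three pairs of formulas from two starting points. For the $S_i/H$ formulas I would take the sum over $S_i$ supplied by Theorem~\ref{T:red_anti} (prefactor $1/(m\cdot 2^{g-1})$) and partition $S_i$ into its intersections with the $H$-orbits; as $f_W,f_A$ are constant on each orbit they are constant on each piece $[\gamma]$, so the sum collapses to $\sum_{[\gamma]}|[\gamma]|\,f_W(\gamma)$ (resp.\ with $f_A$). For the $M_2(E^c(T))/\langle H,\gamma_T\rangle$ formulas I would instead take the full sum over $M_2(E^c(T))$ from Theorem~\ref{T:main} (prefactor $1/(m\cdot 2^{g})$) and group by $\langle H,\gamma_T\rangle$-orbits, now using the full invariance just established. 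For the $S_i/\langle H,\gamma_T\rangle$ formulas I would again start from Theorem~\ref{T:red_anti} and group the $S_i$-sum by $\langle H,\gamma_T\rangle$-orbits intersected with $S_i$; here the hypothesis $\gamma_T\neq 0$ for non-bipartite $G$ (Lemma~\ref{L:bipartite}) makes $\gamma\mapsto\gamma+\gamma_T$ a fixed-point-free involution swapping $S_1$ and $S_2$, which is what guarantees the partition required in the statement exists.

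The step I would treat most carefully is the bookkeeping of the orbit partitions, and this is where the only real subtlety lies. Since an automorphism need not preserve $M_2(E^c(T))$ — it may carry a non-tree edge to a tree edge — I would define each class $[\gamma]$ as a full $C_1$-orbit intersected with the summation domain, exactly as in the definition of $S/H$ preceding the theorem; this makes $S_i/H$, $M_2(E^c(T))/\langle H,\gamma_T\rangle$ and $S_i/\langle H,\gamma_T\rangle$ genuine partitions of their domains, and the extension of $\sigma$ to $C_1$ ensures $f_W,f_A$ stay constant along an orbit even where it leaves $M_2(E^c(T))$. Finally I would verify that the three prefactors are mutually consistent: because $\gamma\mapsto\gamma+\gamma_T$ carries the intersection of each $\langle H,\gamma_T\rangle$-orbit with $S_1$ bijectively onto its intersection with $S_2$, every such orbit meets $S_1$ and $S_2$ equally often, which reconciles the $1/(m\cdot 2^{g})$ normalization with the two $1/(m\cdot 2^{g-1})$ normalizations.
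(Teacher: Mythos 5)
Your proposal is correct and follows essentially the same route as the paper, whose proof is a one-line citation of Theorem~\ref{T:red_anti} together with Theorem~\ref{T:red_counting_eulerian}: your invariance of $(-1)^{\sigma(\gamma)}\trace(W_{1,\gamma}^m)$ and $(-1)^{\sigma(\gamma)}\trace(A_\gamma^m)$ under the generators of $\langle H,\gamma_T\rangle$ (via Lemma~\ref{L:auto}, Lemma~\ref{L:antisymmetry}, and Lemma~\ref{L:parity}) followed by orbit regrouping is exactly the content packed into those two results. Your added care about defining each class $[\gamma]$ as a full $C_1$-orbit intersected with the summation domain, and about the orbits meeting $S_1$ and $S_2$ equally often, supplies bookkeeping the paper leaves implicit and is consistent with its definition of $S/H$.
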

\begin{proof}
This theorem is a corollary of Theorem~\ref{T:red_anti} and Theorem~\ref{T:red_counting_eulerian}. 

\end{proof}

\begin{example}
Let us consider the Eulerian graph $G_2$  in Figure~\ref{F:example}(b) again. As shown in Example~\ref{E:red_anti}, $M_2(E^c(T))=M_2(\{e_2,e_4,e_5,e_6,e_7\})$, $\gamma_T=e_4 + e_5+ e_6 +e _7$, and we choose $S_1=M_2(\{e_2,e_4,e_5,e_6\})$ and $S_2=M_2(E^c(T) )\setminus S_1$. Let $H=\langle\tau_1,\tau_2,\tau_3,\tau_4,\tau_5\rangle$ be a group of automorphisms of $G$ where 
\begin{align*}
\tau_1: & e_1 \leftrightarrow e_2 \\
\tau_2: & e_5\leftrightarrow e_6 \\
\tau_3: & v_1\leftrightarrow v_3, e_0\leftrightarrow e_3, e_4\leftrightarrow e_7 \\
\tau_4: & v_0\leftrightarrow v_2, e_0 \leftrightarrow e_4, e_3\leftrightarrow e_7 \\
\tau_5: & v_0\leftrightarrow v_1, v_2\leftrightarrow v_3, e_1\leftrightarrow e_5,e_2\leftrightarrow e_6, e_3\leftrightarrow e_4. 
\end{align*}
The following table shows the partitions $M_2(E^c(T))/H$, $S_1/H$ and $S_2/H$  of $M_2(E^c(T))$, $S_1$ and $S_2$ induced by $H$ respectively, corresponding to Theorem~\ref{T:red_counting_eulerian} and the first two identities of Theorem~\ref{T:red_counting_eulerian_combined}. Note that for each $[\gamma]\in M_2(E^c(T))/H$ and $i=1,2$, we have $[\gamma]\cap S_i\in S_i/H$ if $[\gamma]\cap S_i\neq \emptyset$. In particular, $|M_2(E^c(T))/H|=15$, $|S_1/H|=|S_2/H|=10$.  

\begin{center}
\begin{tabular}{|c|c|}
\hline
\multicolumn{2}{|c|}{$[\gamma]\in M_2(E^c(T))/H$} \\ \hline
$[\gamma]\cap S_1$& $[\gamma]\cap S_2$   \\ \hline 
0 &  \\ \hline 
$e_2$, $e_5$, $e_6$ & \\ \hline
$e_4$   &   $e_7$   \\ \hline
$e_2+e_4$,  $e_4+e_5$, $e_4+e_6$ & $e_2+e_7$, $e_5+e_7$,  $e_6+e_7$  \\ \hline
$e_2+e_5$, $e_2+e_6$  &    \\ \hline
$e_5+e_6$ &    \\ \hline
& $e_4+e_7$    \\ \hline
$e_2+e_4+e_5$, $e_2+e_4+e_6$  & $e_2+e_5+e_7$, $e_2+e_6+e_7$ \\ \hline
$e_2+e_5+e_6$ &\\ \hline
$e_4+e_5+e_6$ & $e_5+e_6+e_7$\\ \hline
& $e_2+e_4+e_7$, $e_4+e_5+e_7$,  $e_4+e_6+e_7$\\ \hline
$e_2+e_4+e_5+e_6$ & $e_2+e_5+e_6+e_7$\\ \hline
&$e_2+e_4+e_5+e_7$, $e_2+e_4+e_6+e_7$\\ \hline
&$e_4+e_5+e_6+e_7$ \\ \hline
& $e_2+e_4+e_5+e_6+e_7$\\ \hline
\end{tabular}
\end{center}

Furthermore, corresponding to the last four identities of Theorem~\ref{T:red_counting_eulerian_combined}, the next table shows the partitions $M_2(E^c(T))/\langle H,\gamma_T\rangle$, $S_1/\langle H,\gamma_T\rangle$ and $S_2/\langle H,\gamma_T\rangle$  of $M_2(E^c(T))$, $S_1$ and $S_2$ induced by $\langle H,\gamma_T\rangle$ respectively. Now $|M_2(E^c(T))/\langle H,\gamma_T\rangle|=|S_1/\langle H,\gamma_T\rangle|=|S_2/\langle H,\gamma_T\rangle|=7$, while there are exactly 7 different type of values of tuples $\left((-1)^{\sigma(\gamma)}\trace\left(W_{1,\gamma}^8\right),(-1)^{\sigma(\gamma)}\trace\left(A_\gamma^8\right)\right)$ as shown in the table in  Example~\ref{E:red_anti}. 
\begin{center}
\begin{tabular}{|c|c|}
\hline
\multicolumn{2}{|c|}{$[\gamma]\in M_2(E^c(T))/\langle H,\gamma_T\rangle$} \\ \hline
$[\gamma]\cap S_1$& $[\gamma]\cap S_2$    \\ \hline 
0 & $e_4+e_5+e_6+e_7$  \\ \hline 
$e_2$, $e_5$,  & $e_2+e_4+e_5+e_6+e_7$, $e_4+ e_6 +e _7$, \\  
$e_6$,  $e_2+e_5+e_6$ & $e_4+e_5 +e _7$, $e_2+ e_4 +e _7$ \\  \hline
$e_4$, $e_4+e_5+e_6$   &  $e_5+e_6+e_7$,  $e_7$    \\ \hline
$e_2+e_4$,  $e_4+e_5$,  &  $e_2+e_5+ e_6 +e _7$, $e_6+e_7$,  \\ 
 $e_4+e_6$, $e_2+e_4+e_5+e_6$  & $e_5+e_7$,   $e_2+e_7$  \\ \hline
$e_2+e_5$, $e_2+e_6$  & $e_2+e_4+ e_6 +e _7$, $e_2+e_4+ e_5 +e _7$   \\ \hline
$e_5+e_6$ & $e_4+ e _7$   \\ \hline
$e_2+e_4+e_5$, $e_2+e_4+e_6$  & $e_2+e_5+e_7$, $e_2+e_6+e_7$ \\ \hline
\end{tabular}
\end{center}

\end{example}

\bibliographystyle{alpha}
\bibliography{citation}
\end{document}